\newtheorem{theorem}{Theorem}
\newtheorem*{theorem*}{Theorem}
\newtheorem{lemma}{Lemma}
\newtheorem{claim}[lemma]{Claim}
\newtheorem{question}[lemma]{Question}
\newtheorem*{thmB}{Theorem B'}
\newtheorem{conjecture}[lemma]{Conjecture}
\newtheorem{corollary}[lemma]{Corollary}
\newtheorem{proposition}[lemma]{Proposition}
\newtheorem{remark}[lemma]{Remark}
\newtheorem*{main-conjecture}{Main conjecture}
\theoremstyle{definition}
\newtheorem{definition}[lemma]{Definition}
\def\DD{{\mathbb D}}  
\def\TT{{\mathbb T}}  
\def\ZZ{{\mathbb Z}}
\def\AA{{\mathbb A}}
\def\RR{{\mathbb R}}
\def\NN{{\mathbb N}}
\def\La{\Lambda}
\def\la{\lambda}
\def\e{\epsilon}
\def\eps{\epsilon}
\def\ga{\gamma}
\def\Ga{\Gamma}
\def\de{\delta}
\def\La{\Lambda}
\def\be{\beta}
\def\eps{\epsilon}
\def\al{\alpha}
\def\Ga{\Gamma}
\def\si{\sigma}
\def\emp{\emptyset}
\def\diff{\operatorname{Diff}}
\def\noi{\noindent}
\date{\today}
  \title{Critical set for surface diffeomorphisms revisited}
\author{Sylvain Crovisier, Enrique Pujals}
\begin{document}
\maketitle
\begin{abstract}
We propose a notion of \emph{critical set} for surface diffeomorphisms, defined intrinsically in terms of the projective action of the derivative, that plays a role analogous to that of critical points in one-dimensional dynamics. Under a non-degeneracy assumption on the derivative cocycle, a compact invariant set admits a dominated splitting if and only if it contains no critical points. 
For dissipative diffeomorphisms of the disk we derive structural
consequences from the critical set and its recurrence: a dichotomy for
the closure of unstable manifolds of saddle periodic points, and, when the critical set is
non-recurrent, a finite decomposition of the dynamics analogous to the
one-dimensional Misiurewicz case.

\end{abstract}
\section{Introduction}


The motivation of the present paper stems from the classical understanding that, in the one-dimensional setting, the presence of critical points (where the derivative vanishes)  is a key obstruction to hyperbolicity. Conversely, smooth  one-dimensional endomorphisms without critical points are either hyperbolic, conjugate to hyperbolic or conjugate to an irrational rotation; see \cite{M}, where a real one dimensional version of Fatou's hyperbolicity criteria for holomorphic endomorphisms of the Riemann sphere was proved. In this sense, any compact invariant set is ``essentially hyperbolic" if and only if it does not contain critical points.
Extending this perspective to two dimensions, we pursue a similar criterion: to isolate the phenomena that {\em obstruct hyperbolicity for surface diffeomorphisms} and, dually, {\em those whose absence guarantees it}. In this context, the critical set plays a role analogous to that of critical points in one-dimensional dynamics, and designed to capture precisely those dynamical features that impede hyperbolic behavior.

Since it was proved in \cite{PS1} that any compact invariant set exhibiting a {\it dominated splitting} for a generic $C^2$ surface diffeomorphism is a hyperbolic set, it follows that searching for dynamical obstructions to hyperbolicity is essentially equivalent to identifying obstructions to domination. The critical set we propose is therefore naturally aligned with this goal: {\it a well defined set whose presence obstructs domination, and whose absence corresponds to  the existence of a dominated splitting}.

A similar task was carried out in \cite{PRH}; we follow that perspective while simplifying the notion of the critical set introduced there, removing certain cumbersome features that limited its applicability and obtaining a more accessible and intrinsic description of criticality in surface dynamics. 
This concept was further extended in \cite{V} to the context of automorphisms of $\mathbb{C}^2$, underscoring its broader relevance in the study of dynamical systems beyond the real two-dimensional setting.

Informally, to exhibit a dominated splitting means that there is an invariant cone field that is forwardly contracted and backwardly expanded; in that sense, a critical point is a point with {\it ``a cone that is not contracted neither for the future nor for the past"}. More precisely, this notion is recast in terms of the projective dynamics induced by the derivative (see paragraph a- in the present section for a formal definition). Moreover, under the assumption of ``far from homotheties'' (which is implied by sufficient dissipation, see paragraph b), the absence of critical points is equivalent to the existence of a dominated splitting, and no full orbit consists entirely of critical points.

Tangencies associated with the stable and unstable manifold of periodic points (see paragraph i- in section \ref{examples} for the precise definition and discussion) are a natural obstruction to dominated splitting. On the one hand, however, tangencies are invariant under iteration and in that sense would not qualify as a good definition for a critical point.

However, there is a robust reason why the critical set cannot be reduced to tangencies
of periodic points. Kupka--Smale diffeomorphisms (those for which all
periodic points are hyperbolic and all intersections between their stable and
unstable manifolds are transversal) form a residual subset of
$Diff^r(M)$; hence, generically, there are no tangencies associated with
periodic points at all. On the other hand, by the results of Newhouse
\cite{N} there exist \emph{open} sets of surface diffeomorphisms whose
non-wandering set is not hyperbolic and does not admit a dominated splitting.
By theorem \ref{p.existence}, every diffeomorphism in such an open set
carries critical points. Critical points are therefore present on open sets
of diffeomorphisms on which no periodic tangency exists: the notion is not a
reformulation of tangencies, but a strictly more general, intrinsically
defined obstruction (see paragraphs i-, iii- and iv- in section
\ref{examples} for the precise relations between the two notions).

 
The relation between tangencies and critical points is made precise in section \ref{examples}: lemma \ref{lem:tang-crit} shows that associated with any tangency there is a critical point whose critical direction coincides with the tangent line to the stable and unstable manifolds at the tangency point. In the reverse direction, non-recurrent critical points (see remark \ref{rm:crit-dom} and section \ref{sec:misiu}) are related to tangencies between stable and (center-)unstable manifolds of points in sets with dominated splitting.

As in the one-dimensional setting, one can impose conditions on critical points to gain a more detailed understanding of the non-wandering set, for instance, obtaining a finite decomposition into transitive components or ensuring the existence of SRB (Sinai–Ruelle–Bowen) measures. A notable class of such systems is given by Misiurewicz maps, where the critical points are non-recurrent. These maps exhibit a finite number of transitive pieces and often display rich chaotic behavior, including the presence of  SRB measures. In the two-dimensional context, it is natural to expect that a suitably defined notion of criticality could play an analogous role. In this paper, we explore this possibility by introducing and analyzing a class of systems we refer to as two-dimensional Misiurewicz diffeomorphisms, where a version of non-recurrence of the critical set enables similar decomposition into a finite number of transitive pieces.

That approach opens the possibility to consider other settings where a more general condition on the critical set could have strong dynamical implications. This perspective is  carried out in \cite{CLPY} where a strong condition on the critical sets for infinite renormalizable dissipative diffeomorphisms  allows one to obtain a general description of the dynamics (see paragraph v- in section \ref{examples}).







\bigskip

\paragraph{a -- The critical set.}

Consider a compact boundaryless riemannian surface $M$ and a $C^1$-diffeomorphism $f$.
The derivative of $f$  induces a dynamics  on the unit tangent bundle,  $G_x\colon T^1_xM \to T^1_{f(x)}M$,  usually called the {\it projective tangent bundle map}, 
 defined by $$G_x(v)=
\frac{D_xf(v)}{|D_xf(v)|},$$ and the family $\{G_x\}_{x\in M}$ induces a projective cocycle over $f$.  Let  $g$ be the derivative of $G$ along the fibers
$T_x^1M$ of the unit tangent bundle; more precisely, 
$g_x(v)= |D_wG_x|_{w=v}|.$
We denote the composition $G^n_x= G_{f^{n-1}(x)}\circ\dots \circ G_{x}$ and
to simplify the notation, we denote $D_wG^n_x$ by  $g^n_x$, abbreviated as $g^n$ when no confusion arises. Observe that $g^n_x(v)= \Pi g_{f^i(x)}(G^i_x(v)).$ It is easy to show that 

\begin{eqnarray}\label{eq_f-g}
    |g^n(v)|=  \frac{|\det(Df^n)|}{|Df^n(v)|^2}.
\end{eqnarray}
Let $\Lambda\subset M$ be an invariant compact set. That set  may contain attracting, repelling or elliptic  periodic points. Unlike  in \cite{PRH}, it is not assumed that $f_{|\Lambda}$ is  dissipative or that  $\La$ is contained in the limit set of $f$.

\begin{definition}\label{def:crit} Let $\Lambda\subset M$ be an invariant compact set of  a surface diffeomorphism $f.$ It is said that $x$ is a {\it critical point} of $f$ in $\La$ if there exists $v\in T_xM$ such that 
$|g^n(v)|\geq 1$ for all $n\in \ZZ$. The direction $v$ is called the critical direction and 
 $${\rm Crit}(f,\Lambda)=\{x\in\Lambda, \; \exists v\in T_x^1M,\;
\forall n\in \ZZ, \; |g^n(v)|\geq 1\},$$ is called the critical set of $f$ in $\Lambda.$ Throughout the paper, we refer to elements of ${\rm Crit}(f,\Lambda)$ as
critical points.
\end{definition}


In short,  a critical point is a point having a direction $v$ such that $g^n(v)$ is bounded from below for all integers.  By equation (\ref{eq_f-g}), the condition $|g^n(v)|\geq 1$ for all $n\in\ZZ$ is
equivalent to
$$\|Df^n(v)\|^2\;\leq\; |\det(D_xf^n)|, \qquad \forall\, n\in \ZZ.$$
In other words, a critical direction is a direction that is contracted at
least as strongly as the conformal rate $|\det(D_xf^n)|^{1/2}$ (the geometric
mean of the singular values of $D_xf^n$) in \emph{both} time directions: $v$
never carries the dominant expansion, neither in the future nor in the past.
In the dissipative case, where $|\det(D_xf)|\leq b<1$ on $\La$, the forward
inequality gives
$$\|Df^n(v)\|\leq b^{n/2},\qquad n\geq 0,$$
so the critical direction is exponentially contracted in the future, while
for $n\leq 0$ the inequality forces the complementary directions to carry an
expansion of order at least $|\det(D_xf^{n})|^{1/2}$. This is precisely the
behavior of the common tangent direction at a point of tangency between a
stable and an unstable manifold, contracted in the future, dominated by a
transverse expanded direction in the past, and it is the two-dimensional
counterpart of the one-dimensional picture, where the orbit of a critical
point is decoupled from any expansion of the derivative.
 


\begin{definition}\label{def:FFH} Let $f\in Diff^1(M)$ and let $\La$ be a
compact invariant set of $f$. It is said that $f_{|\La}$ is {\it far from
homotheties} if there exists $\de>0$ such that there is no point $x$ and
vector $v\in T^1_xM$ satisfying
\begin{equation}\label{FFH}
(1-\de)^n< g^n(v)<(1+\de)^n, \qquad \forall\, n\geq 0.
\end{equation}
\end{definition}

Observe that if (\ref{FFH}) does not hold for any point $x$ and any vector $v$ then obviously also holds for any $\de'<\de$.

By compactness, it is equivalent to say that  there is a positive integer $N_0$ and $\de$ such that for any $x\in \La$ and $v\in T^1_xM$ there is a positive integer $n\leq N_0$ such that either $g^{n}(v)\geq (1+\de)^n$ or $g^{n}(v)\leq (1-\de)^n.$


 To motivate the notion of  ``far from homotheties''  suppose that for all $n \geq 0$ and all $v \in T^1_xM$ one has $(1-\delta)^n < g^n(v) < (1+\delta)^n$. Since $g^n(v) = |\det(Df^n)|/\|Df^n(v)\|^2$, dividing the window conditions for two vectors $v$ and $w$ gives
$$
\left(\frac{1-\delta}{1+\delta}\right)^{n/2} <
\frac{\|Df^n(v)\|}{\|Df^n(w)\|} <
\left(\frac{1+\delta}{1-\delta}\right)^{n/2},
$$
so that all directions are scaled by $Df^n$ comparably, with discrepancy at most an exponential factor in $\delta$. Taking $\delta\to 0$, the ratio $\|Df^n(v)\|/\|Df^n(w)\|$ is forced to $1$ for all pairs of directions and all iterates, an homothety for all iterates.  Being $\delta$-far from homotheties rules this out uniformly: for every $x \in \Lambda$ there must exist some direction $v \in T^1_xM$ and some $n \in \mathbb{Z}$ such that $g^n(v)$ exits the window $((1-\delta)^n,(1+\delta)^n)$, meaning the derivative cocycle must exhibit a definite discrepancy in the scaling of different directions at some iterate.

Recall that a compact invariant set $\La$ has a dominated splitting if there exists a subbundle decomposition $T_\Lambda M = E \oplus F$ with $\dim(E) = \dim(F) = 1$ and a positive constant $\la < 1$ such that $\frac{\|Df_{|E}\|}{\|Df_{|F}\|} < \la$; the subbundle $E$ is called the center-stable subbundle and $F$ the center-unstable subbundle.
 
In terms of the projective dynamics, this is equivalent to saying that $E$ is a repeller and $F$ is an attractor under $g$, i.e. $g(E) > 1 > g(F)$, or equivalently that for any vector $v$ either $g^n(v) \to 0$ or $g^{-n}(v) \to 0$ as $n \to +\infty$; equivalently again, $\La$ admits a contracting and a repelling cone field. The next result establishes a tight relation between the existence of critical points and dominated splitting.
 
\begin{theorem}\label{p.existence}
Let $f\in Diff^1(M)$  and $\La$ be a  compact invariant set of $f$. If  $f^N_{|\La}$ is far from homotheties for all $N\geq 1,$ it follows that $\La$ admits a dominated splitting $T_\Lambda=E\oplus F$ with $\dim(E)=\dim(F)=1$ if and only if  ${\rm Crit}(f,\Lambda)$ is empty.

\end{theorem}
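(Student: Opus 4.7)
I will prove the two implications separately, with the reverse direction (constructing a critical point from the absence of a dominated splitting) being the substantial one.

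For the forward direction (dominated splitting $\Rightarrow$ ${\rm Crit}(f,\La) = \emptyset$), I decompose any unit vector $v \in T_x^1 M$ along the splitting $E \oplus F$ as $v = v_E + v_F$. If $v_F \neq 0$, the exponential convergence $G^n(v) \to F$ combined with the identity $|g^n(v)| = |\det Df^n|/|Df^n(v)|^2$ yields $|g^n(v)| \lesssim \|Df^n|_E\|/\|Df^n|_F\| \to 0$, so the critical condition fails for large $n > 0$. If instead $v \in E_x$, the symmetric argument applied to $f^{-1}$, whose dominated splitting is $F \oplus E$, gives $|g^{-n}(v)| \to 0$. Either way, $v$ is not a critical direction.

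For the reverse direction I argue by contrapositive: assume $\La$ admits no dominated splitting and produce a critical point, using far-from-homotheties. Consider the closed subsets of the compact projective bundle $T^1 \La$ defined by
\[
K^+ = \{(x,v) : g^n_x(v) \geq 1 \text{ for all } n \geq 0\}, \qquad K^- = \{(x,v) : g^n_x(v) \geq 1 \text{ for all } n \leq 0\},
\]
whose intersection projects to ${\rm Crit}(f,\La)$. The plan is: (i) use the failure of domination, via a Pliss-type selection on the cocycle $-\log g$ along orbits where the forward contraction needed for a strictly invariant cone field breaks, together with compactness of $T^1 \La$, to produce a point of $K^+$; (ii) apply the same argument to $f^{-1}$ to produce a point of $K^-$; (iii) prove that $K^+ \cap K^-$ is non-empty.

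Step (iii) is the main obstacle and the place where far-from-homotheties enters. Any orbit in $K^+$ satisfies $g^n(v) \geq 1 > (1-\de)^n$ for $n > 0$, so the hypothesis forces $g^n(v) \geq (1+\de)^n$ at some $n$, giving genuine exponential growth along the forward orbit. Using this exponential growth together with the cocycle identity $g^m(G^n(v)) = g^{n+m}(v)/g^n(v)$, one passes to an accumulation point of the forward $G$-orbit of a point in $K^+$ and shows that its backward iterates inherit the non-contraction inequality as well. A diagonal extraction then produces a point of $K^+ \cap K^-$, i.e., a critical point. Without far-from-homotheties this argument can collapse on orbits of neutral type where $g^n \equiv 1$ and backward control is lost.
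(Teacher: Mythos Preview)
Your forward direction is fine. Steps (i) and (ii) of the reverse direction are also reasonable; in fact the non-emptiness of $K^+$ and $K^-$ already follows from far-from-homotheties alone via Oseledets (this is the paper's Lemma~\ref{l.H}), without using failure of domination.

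The genuine gap is step (iii), and as organised your scheme proves too much. Your step (iii) starts from an arbitrary $(x,v)\in K^+$ and, invoking only far-from-homotheties, claims to reach $K^+\cap K^-$. If that implication held, then every compact invariant set that is far from homotheties would carry a critical point; but any set with a dominated splitting is far from homotheties and has empty critical set, so step (iii) cannot be correct as stated. Concretely: set $b_n:=\log g^n(v)$, so $b_0=0$ and $b_n\geq 0$ for all $n\geq 0$. Far-from-homotheties provides one $n_0$ with $b_{n_0}\geq n_0\log(1+\de)$, not ``genuine exponential growth along the forward orbit''. Backward control at a forward time $n$ means $g^{-m}(G^n v)=e^{\,b_{n-m}-b_n}\geq 1$ for $0\leq m\leq n$, i.e.\ $b_n=\min_{0\leq k\leq n} b_k$; since $b_0=0\leq b_k$, the only such time is $n=0$ unless $b_n=0$ infinitely often, which nothing forces. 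Accumulation points of the forward $G$-orbit of $(x,v)$ therefore need not lie in $K^-$, and your diagonal extraction is unjustified.

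The paper's proof differs exactly here. It first reformulates dominated splitting as a quantitative condition $(*)$ (Lemma~\ref{DScond}), whose negation produces a \emph{specific} point $x\in\La$ at which \emph{every} unit vector $w$ receives a forward $g$-kick at some iterate: $|g^{N_0}(G^{m_0^+}w)|\geq (1+2\de)^{N_0}$. It then applies Lemma~\ref{l.H} to the $\alpha$-limit set of this same $x$ to manufacture a direction $v$ at $x$ with approximate backward control, applies the universal forward kick to \emph{this} $v$, and uses a Pliss selection together with a short induction to locate an intermediate iterate at which both the forward and the backward inequalities hold. The point you are missing is that the forward and backward estimates must be produced at a common base point, and it is the failure of domination (not merely far-from-homotheties) that supplies such a point.
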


Combined with the main theorem of \cite{PS1}, the previous result yields a
two-dimensional version of Ma\~n\'e's theorem \cite{M} quoted at the beginning
of the introduction: in the absence of critical points, the dynamics is
hyperbolic.

\begin{corollary}\label{cor:mane2d}
For a generic $f\in Diff^2(M)$ the following holds: any compact invariant set
$\La$ such that $f^N_{|\La}$ is far from homotheties for all $N\geq 1,$ is a hyperbolic set if and
only if ${\rm Crit}(f,\La)=\emptyset$.
\end{corollary}

This dichotomy orients the whole paper. When ${\rm Crit}(f,\La)=\emptyset$
the set $\La$ admits a dominated splitting and is, in the generic $C^2$
setting, hyperbolic (corollary \ref{cor:mane2d}); this is the
``essentially hyperbolic'' regime, the two-dimensional counterpart of a
one-dimensional map without critical points. The substance of the theory
lies in the opposite regime, ${\rm Crit}(f,\La)\neq\emptyset$, where
domination fails and the critical set records precisely how: the remainder
of the paper studies what its presence, and conditions on its dynamics,
impose on the global behavior of $f$.

If one considers $C^2-$diffeomorphisms, not necessarily generic, it follows from \cite{PS2} that for any compact invariant set with a dominated splitting,  the limit set  can be decomposed into a finite number of normally hyperbolic closed curves with dynamics conjugate to irrational rotation, finite number of normally hyperbolic periodic intervals and compact sets conjugate to a hyperbolic ones such that the period of non-hyperbolic periodic points is bounded. For a detailed discussion of results about dominated splitting needed in this paper, see section \ref{sec:preliminaries}.  

\begin{remark}\label{rmk:why-powers}
Observe that the hypothesis of theorem \ref{p.existence} requires the far from
homotheties property not only for $f$ but for all its powers. The main reason is that  condition (\ref{FFH}) for $f$ alone does not pass
to the iterates. Consider an attracting periodic point $q$ of period $n$
whose derivative at the period $D_qf^{n}$ has its two eigenvalues of equal
modulus (a homothety up to a rotation; sufficient dissipation only forces
$|\det D_qf^{n}|\leq b^{n}$ and does not exclude this). Along the period the
individual derivatives $D_{f^j(q)}f$ need not be conformal, so $q$ may
satisfy (\ref{FFH}) for $f$ with some $\de$, while at scale $n$ one has
$g^{nk}(v)=1$ for every $v$ and every $k$, so the analogue of (\ref{FFH})
for $f^{n}$ fails at $q$ for every $\de$. A periodic domain, in
particular one of large period, can thus be far from homotheties for $f$
yet not for an iterate, and the obstruction to domination it carries is
visible only at the coarse time scale: this is exactly what the all-powers
hypothesis captures.
\end{remark}

One may ask whether the existence of a critical point in an invariant set is a property preserved under conjugacy. This is not the case for $C^0-$conjugacies: one can have two $C^0-$conjugate surface diffeomorphisms such that the invariant set of one of them is a hyperbolic set (and therefore does not contain critical points) and the other exhibiting a (cubic) tangency (see paragraph ii- in section \ref{examples} for more details). However, having critical points is a property invariant by $C^1-$conjugacies, since  dominated splittings are preserved by diffeomorphisms. 

One may further ask whether the notion of critical point is independent of the choice of metric; to address that issue, one could define  $x$ to be  a \emph{critical point}
if there exists $v\in T^1_xM$ and $K>0$ such that
$\forall n\in \ZZ, \; |g^n(v)|\geq K.$ The threshold $|g^n(v)| \geq 1$ depends on the choice of Riemannian metric, but the critical set $\mathrm{Crit}(f,\Lambda)$ does not. Since $M$ is compact, any two smooth Riemannian metrics are Lipschitz equivalent: there exists $C > 1$ such that $C^{-1}\|w\| \leq \|w\|' \leq C\|w\|$ for all tangent vectors $w$. As $g^n(v) = |\det(Df^n)|/|Df^n(v)|^2$, passing from one metric to the other multiplies $g^n(v)$ by a factor uniformly bounded between $C^{-2}$ and $C^2$, independently of $n$. Hence $|g^n(v)| \geq 1$ for all $n \in \mathbb{Z}$ in one metric implies $|g^n(v)| \geq C^{-2} > 0$ for all $n \in \mathbb{Z}$ in the other, and $x \in \mathrm{Crit}(f,\Lambda)$ with respect to one metric if and only if it belongs to $\mathrm{Crit}(f,\Lambda)$ with respect to the other. Throughout the paper we fix an arbitrary smooth metric and adopt the threshold $K=1$ for notational convenience.

\paragraph{b-- Hypothesis that imply that  $f_{|\La}$ is far from homotheties.}

The main result of this section asserts that, under a quantitative form of dissipation that we will make precise below, any iterate  $f^N_{|\La}$ is far from homotheties off the basins of any attracting periodic points.  The mechanism (for the particular case of $N=1$) is the following: if a direction $v$ at a point $x$ satisfies inequalities (\ref{FFH}), then by (\ref{eq_f-g}), $|Df^n(v)|^2= |\det(Df^n)|/g^n(v)$ is squeezed between $|\det(Df^n)|/(1+\de)^n$ and $|\det(Df^n)|/(1-\de)^n$. When $f$ is sufficiently dissipative, this asymptotic proximity to homotheties forces the orbit into the basin of an attracting periodic point whose local basin contains a ball of definite size. Compactness of $M$ then bounds the number of such attractors, and it  follows that $f$ is far from homotheties. The condition we now introduce is the explicit numerical form of sufficient dissipation that makes this argument go through. A similar argument holds for every iterate of $f$, and therefore removing all the basins of attraction of all sinks, the result follows.

\begin{definition} Let $f\in \diff^1(M^2)$ and let $\La$ be a compact invariant set such that $f_{\La}$ is dissipative. If $f$ satisfies  
\begin{equation}\label{cond-diss-FFH}
   \sqrt{b}\,M_0 + 2b  <1/2,
\end{equation}where $b=\max_{x\in \La}|det(D_xf)|$, $M_0=\max_{x\in \La}||D_xf||$, we  say that {\it $f$ is sufficiently dissipative}.
    
\end{definition}

\begin{theorem}\label{dissipative-FFH} 
Let $f\in \diff^1(M^2)$ and let $\La$ be a compact invariant set such that
$f_{|\La}$ is sufficiently dissipative. Then any power of  $f$ restricted to the set of
points of $\La$ that do not belong to the basin of attraction of any
attracting periodic orbits is  far from homotheties. 
\end{theorem} 


The proof proceeds by a finite-time version of the argument sketched above, whose main ingredient is proposition \ref{prop:pinning}. The sufficient dissipation hypothesis can plausibly be weakened to mere dissipation; this would require a stronger version of proposition \ref{prop:pinning} and is discussed in subsection \ref{subsec:relaxing}. 

We now illustrate the sufficient dissipation condition for the  H\'enon maps of the plane.

\begin{remark}\label{Henon-diss}{\bf H\'enon maps and attracting domains. } Let $f_{\mu,b}$ be the classical H\'enon family of diffeomorphisms of the   real plane $$f_{\mu, b}(x,y)=(x^2+\mu+b.y,x).$$ For dissipative H\'enon maps (i.e., $|b|<1$), there exists a connected open disk  $P$ of the parameter space $\{(\mu,b): |b|<1,\, \mu\in \RR\}$ such that for each $(b, \mu)\in P$ it follows that there is a disk $\DD_{\mu,b}$ such that:

\begin{itemize}
    \item[--] $f_{\mu,b}(\DD_{\mu, b})\subset \DD_{\mu,b},$
    \item for any $(x,y)\in \RR^2$ either there is a forward iterate that is contained in $\DD_{\mu,b}$ or the forward iterates converge to $-\infty.$ 
\end{itemize}
    
\end{remark}

\begin{remark}\label{Henon-FFH}{\bf H\'enon maps and far from homotheties. } If $\sqrt{|b|}< \frac{\sqrt2-1}{2}\approx 0.2071,$ it follows that ${f_{\mu,b}}_{|D_{\mu,b}}$ is sufficiently dissipative.
    
\end{remark}

\paragraph{c-- Properties of the critical set.} In what follows we collect several properties of the critical set. The first group concerns recurrence behavior, paralleling classical results on critical points in one-dimensional dynamics. The second concerns the dependence of the critical set on the diffeomorphism under small perturbations.

\begin{proposition}\label{properties}
Let $f\in Diff^1(M^2)$ and let $\La$ be a compact invariant set, the following properties hold
 \begin{itemize}


  \item[--] If a critical point is periodic, then the derivative at the period has only one eigenvalue: the derivative at the period is either a homothety, elliptic, or parabolic (up to scalar multiplication). In particular, in the dissipative context, the periodic point is an attracting periodic point.

\item[--]  If $f_{/\La}$  is far from homotheties, then it cannot hold that all iterates of the critical point and the critical direction  are  critical points and critical directions.

    \item[--] The critical set is closed and depends semi-continuously on $f$: for any  neighborhood $U$ of ${\rm Crit}(f,\Lambda)$ there exists a neighborhood $V$ of $\La$ such that if $g$ is sufficiently close to $f$ and $\La'$ is a compact invariant set of $g$ contained in $V$ then ${\rm Crit}(g,\Lambda') \subset U.$

 \end{itemize}

\end{proposition}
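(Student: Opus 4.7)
The plan is to assume, toward contradiction, that $x$ admits two linearly independent critical directions $v,w \in T^1_xM$ and derive a violation of $\delta$-FFH. Writing $|g^n(u)|=|\det Df^n|/|Df^nu|^2$, both criticality conditions become $|Df^n v|^2,|Df^n w|^2 \le |\det Df^n|$; combined with the 2D volume identity $|Df^n v|\cdot|Df^n w|\cdot\sin\theta_n = |\det Df^n|\cdot\sin\theta_0$ (where $\theta_n$ is the angle between $Df^nv$ and $Df^nw$), this forces $\sin\theta_n \ge \sin\theta_0$ and pins both $|Df^n v|$ and $|Df^n w|$ into $[\sin\theta_0\sqrt{|\det Df^n|},\,\sqrt{|\det Df^n|}]$. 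Hence $g^n_x(v)$ lies in $[1, 1/\sin^2\theta_0]$ for every $n \in \ZZ$, a bi-infinitely bounded cocycle. I will then extract from the iterates $(f^{k_j}x, v_{k_j})$ a subsequential limit $(y,u)\in T^1M$ at which, by a pigeonhole argument on the bounded log-sequence $\log g^k_x(v)$, the cocycle $g^n_y(u)$ violates the $\delta$-FFH inequality on every initial window, yielding the contradiction.

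\textbf{Item 2 (periodic critical points).} I would set $A = Df^p_x$ and use that $|g^{np}_x(v)|=|\det A|^n/|A^nv|^2 \ge 1$ for every $n\in\ZZ$ forces $v$ to carry bi-infinitely bounded cocycle under the projective dynamics of $A$ on $\mathbb{RP}^1$. A short case analysis on the Jordan form of $A$ rules out eigenvalues of distinct modulus (which would create a projective attractor/repeller and make $|g^{np}(v)|$ vanish or blow up at one time-end). The remaining cases, up to a scalar factor, are the identity (homothety), a Jordan block (parabolic), and a rotation $R_\theta$ (elliptic). In the dissipative regime $|\det A|<1$, equal eigenvalue moduli force both eigenvalues strictly inside the unit disk, so $x$ is attracting.

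\textbf{Item 3 (orbit of critical points).} Under FFH, item 1 provides a unique critical direction $v_n$ at each iterate $f^n(x)$, and by closedness (item 4) these extend continuously to an invariant line field over the orbit closure $X \subset {\rm Crit}(f,\Lambda)$. Arguing by contradiction, I assume the orbit is not periodic, so $X$ carries an ergodic $f$-invariant probability $\mu$ not supported on a periodic orbit. The criticality inequalities $S_n(y):=\sum_{i=0}^{n-1}\log g_{f^iy}(v_{f^iy})\ge 0$ for $n>0$ and $\le 0$ for $n<0$, together with Birkhoff, force $\int \log g_y(v_y)\,d\mu = 0$. Atkinson's recurrence theorem for mean-zero cocycles then provides a $\mu$-typical $y$ with $S_{n_k}(y)\to 0$ along some $n_k\to\infty$; passing to a subsequential limit of $(f^{n_k}y, v_{f^{n_k}y})$ yields $(y',u')$ at which the cocycle is uniformly close to $1$ on arbitrarily long initial windows, reproducing the contradiction of item 1. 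Thus the orbit is periodic, and item 2 (together with FFH, which rules out the degenerate case $g^n \equiv 1$ on the full orbit) will give attractivity.

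\textbf{Item 4 (closedness and semi-continuity).} Closedness follows by compactness of $T^1M$: given $x_k \to x$ in $\Lambda$ with critical directions $v_k$, extract $v_k \to v$ and use continuity of $g^n$ to obtain $|g^n_x(v)| = \lim_k|g^n_{x_k}(v_k)| \ge 1$ for each $n$, hence $x \in {\rm Crit}(f,\Lambda)$. The semi-continuity statement in $f$ proceeds by the analogous diagonal argument using $C^1$-continuity of the projective cocycle: assuming the conclusion fails, I would extract $g_j \to f$ in $C^1$, compact invariant sets $\Lambda'_j \subset V_j$ with $V_j$ shrinking to $\Lambda$, and $x_j \in {\rm Crit}(g_j,\Lambda'_j)\setminus U$ accumulating at $x \in \Lambda\setminus U$ with a limit critical direction for $f$, contradicting ${\rm Crit}(f,\Lambda)\subset U$. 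The hard part, common to items 1 and 3, will be converting the \emph{bounded} cocycle extracted from two critical directions (resp.\ a non-periodic critical orbit) into an actual point satisfying $(1-\delta)^n < g^n_y(u) < (1+\delta)^n$ for every $n\ge 1$ (not just asymptotically); this small-$n$ regime is what forces the Pliss/Atkinson-style selection of well-recurrent base points rather than a one-shot asymptotic estimate.
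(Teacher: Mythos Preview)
Your items 2 and 4 match the paper's arguments (the paper phrases item 2 via the M\"obius trichotomy for the projective map $G^p$, which is the same content as your Jordan-form analysis).

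For item 1 your route is genuinely different. The paper works directly with the singular directions $e_n,f_n$ of $Df^n_x$: under FFH one extracts a sequence $n\to\infty$ with $g^n_x(e_n)>(1-\delta)^{-n}$, and then the explicit formula $g^n(v)=ab/(a^2+b^2\tau^2)$ in the basis $\{e_n,f_n\}$ (with $a=|Df^n e_n|$, $b=|Df^n f_n|$, $v=e_n+\tau f_n$) forces any critical direction $v$ to lie within angle $(1-\delta)^n$ of $e_n$, hence to be unique. This one-line linear-algebra estimate sidesteps entirely the small-$n$ selection problem you correctly flag as the hard part of your bounded-cocycle approach; your argument should also work, but is longer.

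For item 3 there is a gap, and the ergodic-theoretic machinery is unnecessary. You assert that the unique critical directions $v_n$ assemble into a $G$-\emph{invariant} line field over the orbit closure, but items 1 and 4 give only uniqueness and continuity: a priori $G(v_n)$ is merely $c^{-1}$-critical at $f^{n+1}(x)$ (with $c=g_{f^n x}(v_n)\ge 1$), not $1$-critical, so it need not equal $v_{n+1}$. Without invariance your Birkhoff sum $S_n(y)=\sum\log g_{f^iy}(v_{f^iy})$ is unrelated to $\log g^n_y(v_y)$, and the criticality inequalities do not control it. The fix is that the paper's estimate from item 1 actually pins any $K$-critical direction (for any $K>0$) exponentially close to $e_n$, so $G(v_n)=v_{n+1}$ after all. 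But once you have invariance the conclusion is immediate and Atkinson is superfluous: criticality at every $f^n(x)$ reads $g^{m+n}_x(v_0)\ge g^n_x(v_0)$ for all $m,n\in\ZZ$, which forces $g^n_x(v_0)\equiv 1$ and violates FFH outright. (The paper's written proof does not treat item 3 explicitly; this direct argument is presumably what is intended.)
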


The second item of the proposition for dissipative dynamics is similar to  an obvious result for one-dimensional endomorphisms: if a critical point is periodic, then it is an attracting periodic point. In the dissipative context, the proof proceeds by considering the return of a critical direction under the iterate that makes the critical point periodic: if the direction is invariant, then the periodic point is an attracting periodic point such that the derivative has   either one or two invariant directions (with the same rate of contraction); if the direction is not invariant, then the derivative has a complex eigenvalue with modulus smaller than one.  

The following remark and proposition  are about relating the critical direction of a critical point with the subbundles of a set with dominated splitting that exists in the complement of the critical set. The next remark, describes this relation under the assumption that the critical point is not forward (backward) recurrent.
To be precise,  let us recall first that a dominated splitting $E\oplus F$ of a compact invariant set $\La$ can be extended to an invariant dominated splitting $\tilde E\oplus \tilde F$ on a small neighborhood $U$ of $\La$ in the sense that if $x\in U$ and $f(x)\in U$ then $D_f(\tilde E_x)= \tilde E_{f(x)}$ and $D_f(\tilde F_x)= \tilde F_{f(x)}$. The extension $\tilde F$ is unique for the points that all its backward iterates remain in $U$ and similarly the extension $\tilde E$ is unique for the points that all its forward iterates remain in $U$. Moreover, if $U$ is a sufficiently small neighborhood it holds that $\tilde F (\tilde E)$ is close to $F$ in the sense that given $x\in U$ there is $x'\in \La$ such that the slope between $\tilde F_x$ and $F_{x'}$ is small (respectively with $\tilde E_x$ and $\tilde E_{x'}$).

\begin{remark}\label{rm:crit-dom} Given a critical point $c$, if the  $\omega-$limit of $c$ (denoted as $\omega(c)$) is disjoint from the critical set, then $c$ belongs to the center-stable manifold of $\omega(c)$ and the forward iterates of the critical direction coincide with $\tilde E$, which is uniquely defined on the forward orbit of $c$. If  the $\alpha-$limit of $c$  (denoted as $\alpha(c)$) is disjoint from the critical set, then $c$ belongs to the center-unstable manifold of $\alpha(c)$ and the backward iterates of the critical direction coincide with $\tilde F$ (uniquely defined on the backward orbit of $c$). 
    
\end{remark}
This follows immediately from the fact that if the critical direction does not coincide with the center-unstable subbundle $\tilde F$ defined in a neighborhood of $\La$,  then the backward iterates converge to $\tilde E$ and therefore $g^{-n}(v)\to 0,$ contradicting the definition of critical direction. Under the non-recurrence assumption above, a critical point behaves as the point in the orbit of tangency where the unstable and stable directions are interchanged (see paragraph i- of \ref{examples}).

A more general statement, which allows for recurrence of critical points to the critical set, is the following compatibility result between the subbundles of a dominated splitting and the critical directions of a critical point.

\begin{proposition}\label{pr:compatibility} Let $f\in \mathrm{Diff}^1(M^2)$ and let $\Lambda$ be a compact invariant set. Suppose there exists $\delta_0>0$ such that $\Lambda_{\delta_0}$ is non-empty, where $\Lambda_{\delta_0}=\bigcap_{n\in \mathbb{Z}} f^n\!\left(\Lambda\setminus B_{\delta_0}(\mathrm{Crit}(f, \Lambda))\right)$ .

Then, for any $\varepsilon>0$ and $\delta<\delta_0$, there exists a positive integer $N_0$ such that the following holds. If $c$ is a critical point satisfying $f^{-j}(c)\notin B_{\delta}(\mathrm{Crit}(f,\Lambda))$ for all $0<j\leq n$ with $n>N_0$, then there exists a positive integer $n^*\leq n$ such that:
\begin{itemize}
    \item[--] $f^{-n^*}(c)\in B_\varepsilon(\Lambda_{\delta})$, and
    \item[--]  $\mathrm{slope}(G^{-n^*}(v), F)< \varepsilon$, where $F$ is the center-unstable subbundle of $\Lambda_{\delta}$.
\end{itemize}
Similarly, if $c$ is a critical point satisfying $f^{j}(c)\notin B_\delta(\mathrm{Crit}(f,\Lambda))$ for all $0< j\leq n$ with $n>N_0$, then there exists a positive integer $n^*\leq n$ such that:
\begin{itemize}
    \item[--] $f^{n^*}(c)\in B_\varepsilon(\Lambda_\delta)$, and
    \item[--]   $\mathrm{slope}(G^{n^*}(v), E)< \varepsilon$, where $E$ is the center-stable subbundle of $\Lambda_\delta$.
\end{itemize}

\end{proposition}

In short, the previous proposition states that the critical direction behaves like a center-unstable direction when iterated backward (provided the critical point takes a long time to return backward to a neighborhood of the critical set), and like a center-stable direction when iterated forward (provided it takes a long time to return forward).

A simple consequence of that proposition is that for points close to a set with dominated splitting whose forward (backward) iterate approaches a critical point, the forward (backward) iterate of the center-unstable (stable) direction approaches the critical direction of that critical point.

\begin{corollary}\label{cor:compatibility} 
Let $f\in \mathrm{Diff}^1(M^2)$ and let $\Lambda$ be a compact invariant set. Suppose there exists $\delta_0>0$ such that the set $\Lambda_{\delta_0}=\bigcap_{n\in \mathbb{Z}} f^n\!\left(\Lambda\setminus B_{\delta_0}(\mathrm{Crit}(f, \Lambda))\right)$ is non-empty.

For any $\varepsilon>0$ and $\de_1<\de_0$, there exist a positive integer $N_0$, and constant $\delta_2>0$ such that the following holds. If  $f^{j}(x)\notin B_{\delta_1}(\mathrm{Crit}(f,\Lambda))$ for all $0\leq j\leq n-1$ with $n\geq N_0$, and $f^n(x)\in B_{\delta_2}(\mathrm{Crit}(f,\La))$, then 

\begin{itemize}
    \item[--] there is a positive integer $n^*<n$ such that $f^{n^*}(x)\in B_\e(\La_{\de_1}),$
    \item[--]  
there exists $c\in \mathrm{Crit}(f, \Lambda)$ such that $\mathrm{slope}(G^{n-n^*}(\widetilde{F}_{f^{n^*}(x)}), v_c)<\varepsilon.$
\end{itemize}

Similarly, if  $f^{-j}(x)\notin B_{\delta_1}(\mathrm{Crit}(f,\Lambda))$ for all $0\leq j\leq n-1$ with $n\geq N_0$, and $f^{-n}(x)\in B_{\delta_2}(\mathrm{Crit}(f, \Lambda)$ then 

\begin{itemize}
    \item[--] there is a positive integer $n^*<n$ such that $f^{-n^*}(x)\in B_\e(\La_{\de_1}),$
    \item[--] there exists $c\in \mathrm{Crit}(f, \Lambda)$ such that $\mathrm{slope}(G^{-(n-n^*)}(\widetilde{F}_{f^{-n^*}(x)}), v_c)<\varepsilon.$
\end{itemize}

\end{corollary}

In one dimension, exponential self-recurrence of a critical point forces it into the basin of an attracting periodic point; Proposition \ref{exponential recurrence} below is the two-dimensional analog, valid under no exponential growth of the norm.

\begin{proposition}\label{exponential recurrence}
Let $f$ be a $C^2$ surface diffeomorphism and $\La$ be a dissipative invariant compact set,  satisfying $\underset{k\to +\infty}\limsup \tfrac 1 k \log \|Df^k|_\La\|\leq 0$ which contains a critical points $c$
with exponential recurrence, i.e. $\underset{k\to +\infty}\liminf \tfrac 1 k \log d(f^k(c),c)< 0.$
Then $\La$ contains a sink.
\end{proposition}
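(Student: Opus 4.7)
The plan is to combine the exponential contraction along the critical direction with the exponential recurrence of $c$ and the $C^2$-distortion estimates for $f$ to construct a trapping region near $c$ under some iterate $f^{n_k}$, and to deduce that the resulting periodic point is a sink contained in $\Lambda$. From the critical point condition $|g^n(v)|\geq 1$ together with $|g^n(v)|=|\det Df^n_c|/|Df^n_c(v)|^2$ and dissipation $|\det Df^n_c|\leq b^n$, one obtains
\[
|Df^n_c(v)|\leq b^{n/2}=e^{-\lambda n},\qquad \lambda:=-\tfrac{1}{2}\log b>0,
\]
so the critical direction $v$ is exponentially contracted in forward time. The subexponential growth hypothesis yields, for any $\epsilon>0$, $\|Df^n|_\Lambda\|\leq e^{\epsilon n}$ for all large $n$, and a chain-rule computation using the $C^2$-regularity of $f$ then gives $\|D^2 f^n\|\leq e^{2\epsilon n}$ on a neighborhood of $\Lambda$. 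Pick $\alpha>0$ and a sequence $n_k\to\infty$ with $d(f^{n_k}(c),c)\leq e^{-\alpha n_k}$ from the exponential recurrence, and fix $\epsilon\ll\min(\alpha,\lambda)$.

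\emph{Construction of the trap.} The heart of the proof is to exhibit, for large $k$, a topological disk $D_k$ containing $c$ with $f^{n_k}(D_k)\subset\interior(D_k)$. I would take $D_k$ to be a rectangle in coordinates adapted to the singular value decomposition of $A_k:=Df^{n_k}_c$: of width proportional to $e^{-\lambda n_k}$ along the direction of the smallest singular value (inherited from $v$) and of scale $\rho_k\sim e^{-\alpha n_k/2}$ transversally. The $C^2$-distortion bound provides the affine approximation
\[
|f^{n_k}(y)-f^{n_k}(c)-A_k(y-c)|\leq e^{2\epsilon n_k}|y-c|^2\quad\text{on }D_k,
\]
and the choices $\epsilon\ll\alpha$ and $|f^{n_k}(c)-c|\leq e^{-\alpha n_k}$ make the Taylor remainder and the displacement negligible at scale $\rho_k$. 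The smallness of the smallest singular value of $A_k$ is then the key ingredient forcing the affine image strictly inside $D_k$.

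\emph{From trap to sink.} Brouwer's theorem applied to $f^{n_k}|_{D_k}$ produces a periodic point $p_k\in D_k$ of period $n_k$. The strict trapping on a simply connected disk rules out $p_k$ being a source or a saddle (either would produce points of $D_k$ whose forward $f^{n_k}$-orbits escape $D_k$ along the unstable direction, contradicting the trap), so combined with $|\det Df^{n_k}_{p_k}|\leq b^{n_k}$ both eigenvalues of $Df^{n_k}_{p_k}$ have modulus strictly less than one and $p_k$ is a sink. The iterates $f^{jn_k}(c)\in D_k$ then accumulate on the maximal $f^{n_k}$-invariant set in $D_k$, which for $D_k$ small collapses to $\{p_k\}$; hence $p_k\in\omega(c)\subset\Lambda$ and the full $f$-orbit of $p_k$ lies in $\Lambda$. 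The main obstacle is the trap construction itself: the subexponential growth hypothesis provides no direct contraction transverse to $v$, so a round ball cannot be used, and the shape of $D_k$ must be carefully adapted to the singular-value geometry of $A_k$ with the $C^2$-distortion estimates (in the spirit of Pujals-Sambarino) applied delicately so that the Taylor remainder and the displacement stay controlled at the chosen scale $\rho_k$.
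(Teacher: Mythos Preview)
Your trap construction has a genuine gap: there is no contraction transverse to $v$, and without it no rectangle aligned with the singular-value decomposition of $A_k=Df^{n_k}_c$ can map into itself. Concretely, along the direction orthogonal to $v$ you only know $\sigma_{\max}(A_k)\leq e^{\epsilon n_k}$, so the affine image of your rectangle has transverse size at least $\sigma_{\max}\cdot\rho_k$, which need not be smaller than $\rho_k$. Worse, the input and output singular directions of $A_k$ generally differ, so the image rectangle is rotated relative to $D_k$; for it to fit you would need the \emph{long} side of the image to be shorter than the \emph{short} side of $D_k$, forcing a relation like $\alpha>2\lambda$ that is not in the hypotheses. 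Your closing remark that ``a round ball cannot be used'' shows you have not found the missing mechanism; in fact the paper \emph{does} use a round ball.

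The missing idea is the backward half of the critical condition, $|g^{-m}(u_0)|\geq 1$, which you never invoke. The paper takes $m=[\alpha n]$ and shows (Claims~2--4) that the least-expanded direction $e_z$ of $Df^{-m}(z)$ is, for $z$ near $c$, exponentially close to the critical direction $u_0$; this uses $|g^{-m}(u_0)|\geq 1$ in an essential way. The payoff is an alignment argument in the projective dynamics rather than the spatial one: for any unit vector $w$ at any $x\in B(c,(1-\delta)^n)$, either $\|Df^n\cdot w\|$ is already exponentially small, or $G^n(w)$ is forced close to $e_{f^n(x)}$ and hence to $u_0$, after which the forward contraction $|Df^\ell(u_0)|\leq b^{\ell/2}$ kills it in a further $\ell=[\beta n]$ steps. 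This yields $\|Df^{2n}\|<(1-\eta)^{2n}$ uniformly on the round ball $B$, so $f^{2n}$ is a genuine contraction of $B$ into itself and the sink follows directly. (Incidentally, your Brouwer-plus-trapping argument for the sink is also incomplete: a trapping disk can contain saddles, as in a horseshoe; one needs the uniform contraction, not just the trap.)
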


\paragraph{d-- Critical points for dissipative dynamics of the disk.}

In what follows, we assume that $\DD$ is an open disk that is properly invariant by $f$, that is $\overline{f(\DD)}\subset \DD$ and $f: \DD\to f(\DD)$ is a diffeomorphism.To simplify notation, we denote this class of maps by $ Diff^r(\DD)$ and observe that any map in that class can be extended to a diffeomorphism of the sphere.

In dimension one, every unstable branch of a repelling periodic point either contains a critical point or lies in a periodic interval on which the dynamics is a diffeomorphism. Theorem \ref{disk2} is the two-dimensional analog, for mildly and sufficiently dissipative diffeomorphisms of the disk; the second option of the 1D statement is replaced by the existence of a normally hyperbolic invariant arc, since the map is already a diffeomorphism.

\begin{definition}
      Given a boundaryless surface $S$ and a dissipative  smooth diffeomorphisms $f:S\to f(S)\subset S$,  it is said that $f$ is {\it mildly dissipative} if for any ergodic measure $\mu$ which is not supported on an attracting periodic point,and   for $\mu$-almost every point, each of its  two stable branches   meets the boundary of $S$.
 \end{definition}

In \cite{CP} it was shown
that mild dissipation is satisfied for large classes of systems. For instance, it holds for some $C^2$
open sets of diffeomorphisms of the disc including diffeomorphisms close to one-dimensional endomorphisms, and also for polynomial automorphisms of $\RR^2$ whose
Jacobian is  close to zero, including the diffeomorphisms from the H\'enon family
with Jacobian of modulus less than $1/4$ (up to restricting to an appropriate trapped disc).  This class captures certain properties of one-dimensional maps, but keeps two-dimensional
features showing all the well-known complexity of dissipative surface diffeomorphisms.
The dynamics of the new class, in some sense, is intermediate between one-dimensional
dynamics and general surface diffeomorphisms.

The two-dimensional result that resembles the one-dimensional stated before  is proved for the closure of an unstable branch; moreover, the second option that holds in the one-dimensional situation does not make much sense, given that the map is already a diffeomorphism. So, that option is replaced by saying that the accumulation set  of the unstable branch is in the basin of attraction of a periodic point or it is an invariant closed arc $\ga$ that is {\it normally hyperbolic}; i.e. $\ga$ has a dominated splitting $E^s\oplus F$ such that $F$ is tangent to the curve $\ga$ and $E^s$ is a contracting direction. To be precise, given a saddle periodic point $p$, which we may assume without loss of generality to be fixed,  the accumulation set of one of its unstable branches (which can also be assumed fixed) is the accumulation set of the union of the iterates of a fundamental domain of that unstable branch. This set is an invariant connected set that contains $p$.

 \begin{theorem}\label{disk2} Let $f\in Diff^2(\DD)$ such that $f_{|\DD}$ is mildly and sufficiently dissipative. Given an unstable branch  $\Ga^u(p)$ of a saddle fixed point $p$ either:

\begin{enumerate}


\item[--]  $\Ga^u(p)$ is a normally hyperbolic attracting  invariant arc with one endpoint $p$ and the other is a (semi-)attracting fixed point, 

 \item[--] or there is a critical point either in  $\Ga^u(p)$ or in the accumulation set of $\Ga^u(p)$.

    \end{enumerate}


 
\end{theorem}

Observe that in the previous theorem it is not claimed that the critical point belongs to the limit set. Examples of that situation are considered in  section \ref{examples}, paragraph {\em vi}. 

\paragraph{f-- Misiurewicz diffeomorphisms.}

A smooth one-dimensional endomorphism is Misiurewicz if all its critical points are non-recurrent; in that case the dynamics is uniformly expanding away from the critical set. We introduce the two-dimensional analog (Definition below) and prove, in Theorem \ref{thm:finite classes}, that for mildly and sufficiently dissipative Misiurewicz diffeomorphisms of the disk the number of generalized homoclinic classes is finite.



\begin{definition}
 Let $\La$ be a compact invariant set of a $C^{2}$ surface diffeomorphism $f$. We say that $f_{|\La}$ is a  {\it Misiurewicz diffeomorphism} if there exists a neighborhood $V$ of the critical set ${\rm Crit}(f, \La)$ such that  any $c\in {\rm Crit}(f, \La)$ satisfies $\al(c)\cap  V=\emp$ and $\omega(c)\cap V =\emp$.
\end{definition}





\vskip 3pt

%


Differently that in the one dimensional case where the Misiurewicz condition only involves the forward orbit
of the critical points, in the two-dimensional case one has also to consider the backward iterates.   This is not
a technical precaution but reflects the structure of a critical point
described in remark \ref{rm:crit-dom} and proposition \ref{pr:compatibility}:
the critical direction behaves as a center-stable direction in the future and
as a center-unstable direction in the past, so the critical orbit interacts
with the rest of the dynamics through both of its $\alpha-$ and $\omega-$ limits, exactly as a point
of tangency, whose forward orbit follows a stable manifold and whose backward
orbit follows an unstable one. Accordingly, the proofs of section
\ref{sec:misiu} use the condition $\omega(c)\cap V=\emptyset$ and
$\alpha(c)\cap V=\emptyset$ in an entirely symmetric way.
 
The known examples of Misiurewicz maps are described in section \ref{examples}, in paragraph ii. Naturally, one can wonder if these are the only natural classes of examples. More broadly, one can ask how the dynamics on the critical set constrains the global dynamics on the non-wandering set; the natural question is the following:

\begin{question} \label{misiu3}Let $\La$ be a compact invariant set of a $C^{2}$  surface diffeomorphisms $f$. If  $f_{|\La}$ is   Misiurewicz does it follow that $\Lambda$ can be decomposed into a finite number of transitive sets and a finite number of normally hyperbolic arcs?
 
\end{question}

If one  assumes that all the periodic points are hyperbolic, and  if the previous question is answered positively, then the dynamics  decomposes into finitely many transitive pieces. If one allows non-hyperbolicity,  one can have infinitely many  transitive isolated sets; for instance,  a normally hyperbolic arc such that the restriction to that arc is a one-dimensional diffeomorphism exhibiting infinitely many attracting fixed points (as in the case of the map $x\to x^r\sin(\frac 1{x})$). 

In the following theorem \ref{thm:finite classes}, we provide  a partial answer  to question \ref{misiu3} for mildly dissipative diffeomorphisms of the disk  focusing on non-trivial {\it homoclinic classes} instead of general transitive sets. In this paper, a homoclinic class is the closure of the set of  {\it crossing} points of the stable and unstable manifolds of a saddle periodic point (see definition \ref{defi:crossing}).    In the classical definition, only transversal intersections are considered: an intersection is transversal when the tangent spaces of the stable and unstable manifolds at the intersection point are not collinear. We instead allow crossing intersections, meaning that the unstable manifold meets both connected components of any local-stable ``band'' around the intersection point and for brevity we keep calling  these classes as homoclinic classes (see definition \ref{defi:generalizedHC} in section \ref{sec:misiu} for details) .

\begin{theorem}\label{thm:finite classes} Let $f$ be a mildly and sufficiently dissipative     $C^2-$diffeomor\-phism of the disk, and let
$\La=\bigcap_{n\geq 0}f^n(\DD)$ be its maximal invariant set, which is
compact and invariant. Suppose that  $f_{|\La}$ is Misiurewicz. Then, the number of non-trivial  homoclinic classes is finite.

Moreover, there exists a positive integer $n_0$ such that any saddle periodic point  with period larger than $n_0$ not contained in any of the finitely many non-trivial homoclinic classes,  belongs to  a non-trivial  normally hyperbolic periodic arc.
    
\end{theorem}

 Observe that there could exist infinitely many saddle periodic orbits that do not belong to a non-trivial homoclinic class and there  could also exist infinitely many normally hyperbolic arcs. 

We note that in the one-dimensional case, Misiurewicz maps are not $C^2$-open:
they are typically accumulated by maps for which the critical point is recurrent
but not periodic. The analogous question for surface diffeomorphisms is beyond
the scope of the present paper. Here our goal is more modest: to show that the
non-recurrence condition on the critical set has strong dynamical consequences,
namely the finiteness of the number of homoclinic classes, in the spirit of the
one-dimensional case where non-recurrence of critical points implies a finite
decomposition into transitive pieces.

\paragraph{A dictionary.} The following table summarizes how the results of
this paper transpose the classical role of critical points in one-dimensional
dynamics to surface diffeomorphisms.

\begin{center}
\begin{tabular}{p{0.42\textwidth}|p{0.42\textwidth}}
{\bf One-dimensional endos} & {\bf Surface diffeomorphisms}\\
\hline\hline
critical point: $f'(c)=0$ &
critical point: definition \ref{def:crit}, via the projective cocycle
\\
\hline
no critical points $\Rightarrow$ essentially hyperbolic (Ma\~n\'e \cite{M}) &
theorem \ref{p.existence}; corollary \ref{cor:mane2d} (generic $C^2$,
far from homotheties)\\
\hline
a periodic critical point is attracting &
proposition \ref{properties}, first item\\
\hline
exponentially recurrent critical point lies in the basin of an attracting
periodic orbit &
proposition \ref{exponential recurrence}\\
\hline
an unstable branch of a repelling periodic point contains a critical point,
or lies in a periodic interval where the map is a diffeomorphism &
theorem \ref{disk2}: critical point in the closure of the branch, or
normally hyperbolic invariant arc\\
\hline
Misiurewicz: non-recurrent critical orbits $\Rightarrow$ finitely many
transitive pieces &
Misiurewicz diffeomorphisms (paragraph f-); theorem
\ref{thm:finite classes}: finitely many generalized homoclinic classes\\
\end{tabular}
\end{center}


\paragraph{Organization of the paper.}

The paper is organized as follows. Section \ref{examples} collects examples that calibrate the definition of the critical set against known constructions: tangencies between stable and unstable manifolds, Misiurewicz maps in the H\'enon family, recurrent critical points without exponential growth of the projective cocycle, the constructions of Benedicks--Carleson and Wang--Young, odometers and unicritical sets, and critical points outside the limit set. Section \ref{sec:preliminaries} gathers, for reference, the results on dominated splitting that are used in the sequel; this section is mostly expository and follows \cite{PS1,PS2}. Section \ref{proof-existence} proves Theorem \ref{p.existence}, the characterization of dominated splitting by the absence of critical points under the far-from-homotheties hypothesis. Section \ref{sec:dissipative} proves Theorem \ref{dissipative-FFH}, that under sufficient dissipation every power of $f$ is far from homotheties off the basins of the attracting periodic orbits; this is the longest and most technical part of the paper, organized around a singular-frame factorization of the derivative cocycle, and concludes in subsection \ref{subsec:relaxing} with a discussion of how the sufficient dissipation hypothesis might be weakened. Section \ref{sec:properties} establishes the properties of the critical set listed in Proposition \ref{properties}, including the compatibility between critical directions and the subbundles of a nearby dominated splitting (Proposition \ref{pr:compatibility}). Section \ref{sec:disk} proves Theorem \ref{disk2} on the dichotomy for unstable branches of saddle points for mildly and sufficiently dissipative diffeomorphisms of the disk. Section \ref{sec:misiu} introduces a generalized notion of homoclinic class and proves Theorem \ref{thm:finite classes} on the finiteness of such classes for mildly and sufficiently dissipative Misiurewicz diffeomorphisms. Section \ref{sec:questions} collects some questions left open by this work, concerning the structure of the critical set, the strengthening of the critical-point condition, and the extraction of SRB measures from conditions on the critical orbit.

\section{Examples}
\label{examples}

In this section we present examples of non-hyperbolic diffeomorphisms for which the critical set can be identified.

\paragraph{{\em i-} Tangencies and critical points.}

Homoclinic and heteroclinic tangencies have played a fundamental role establishing the existence of open sets of non-hyperbolic surface diffeomorphisms. In our context, these systems have critical points.  So, tangencies may provide a natural candidate for critical points.

\begin{definition} We say that $f$
has a {\it heteroclinic tangency} if there is a pair of periodic points $p, q$ such that
there is a point $x\in W^s(p)\cap W^u(q)$ with $T_xW^s(p)=
T_xW^u(q).$ In this case, we say that $x$ is a point of tangency.
\end{definition}

Lemma 1.3.2 in \cite{PRH}  relates the tangencies with the critical points.
Observe that if $x$ is a point of tangency, then any iterate of $x$ is also a point of
tangency. However, the lemma says that in the orbit of tangency there is exactly one   point $x$ and an integer $k$  such that $|g^{-n}_x(v)|\geq 1$ for any $n\geq 0$  and $|g^n_{f^k(x)}(G^k(v))|\geq 1$ also for any $n\geq 0,$ where $v$ spans  $T_xW^s(p)$ with $T_xW^u(q)$. In that sense, a tangency seems to be  a critical point up to an iterate. However, in the following lemma it is shown that given a tangency, there is an iterate that is a critical point in the sense of definition \ref{def:crit}.

\begin{lemma}\label{lem:tang-crit} Let $x$ be a point of  heteroclinic or homoclinic tangency then, there is an iterate of $x$ that it is a critical point.
    
\end{lemma}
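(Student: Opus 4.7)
\medskip

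\noindent\textbf{Proof plan for Lemma \ref{lem:tang-crit}.}
The plan is to reduce the lemma to a simple observation: along the orbit of a tangency, the function $n\mapsto |g^n_x(v)|$ tends to $+\infty$ in both time directions, so it attains its minimum at some integer $k$, and the shift of $v$ to that iterate is then a critical direction by the cocycle identity.

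First I fix notation. Let $x\in W^s(p)\cap W^u(q)$ be a tangency, and let $v\in T^1_xM$ be a unit vector spanning the common tangent line $T_xW^s(p)=T_xW^u(q)$. Since $W^s(p)$ and $W^u(q)$ are $f$-invariant, every iterate $f^n(x)$ also lies in $W^s(p)\cap W^u(q)$, and the image direction $G^n_x(v)\in T^1_{f^n(x)}M$ is tangent to both manifolds there; thus the whole orbit of $x$ consists of tangency points with distinguished direction obtained from $v$ by the projective cocycle.

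The main technical step is the asymptotic estimate
\[
\frac{1}{n}\log |g^n_x(v)|\;\longrightarrow\;\log\bigl|\lambda_u(p)/\lambda_s(p)\bigr|>0\quad\text{as }n\to+\infty,
\]
and symmetrically with $q$ in place of $p$ as $n\to-\infty$. To establish it, I use the identity $|g^n(v)|=|\det Df^n(x)|/|Df^n(v)|^2$ recalled in the introduction. As $n\to+\infty$ the orbit $f^n(x)$ converges to the (hyperbolic saddle) periodic point $p$, and the direction $G^n_x(v)$, being tangent to $W^s(p)$, converges to the stable eigenspace of $p$. Continuity of $Df$ along the orbit then yields $\tfrac1n\log|Df^n(v)|\to \log|\lambda_s(p)|<0$ and $\tfrac1n\log|\det Df^n(x)|\to \log|\lambda_s(p)\lambda_u(p)|$, giving the claimed divergence. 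The argument for $n\to-\infty$ is the mirror image, using that $f^{-n}(x)\to q$ and that $G^{-n}_x(v)$ is tangent to $W^u(q)$ and thus approaches the unstable eigenspace of $q$.

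Once the asymptotics are in place, the proof is immediate. The function $\phi:\ZZ\to \RR_{>0}$, $\phi(n)=|g^n_x(v)|$, tends to $+\infty$ in both directions, hence attains its global minimum at some $k\in\ZZ$. Set $y=f^k(x)$ and $w=G^k_x(v)\in T^1_yM$. The cocycle identity $g^{k+m}_x(v)=g^m_{f^k(x)}(G^k_x(v))\cdot g^k_x(v)$ gives
\[
|g^m_y(w)|=\frac{|g^{k+m}_x(v)|}{|g^k_x(v)|}=\frac{\phi(k+m)}{\phi(k)}\geq 1\quad\text{for every }m\in\ZZ,
\]
so $y$ is a critical point of $f$ with critical direction $w$, as required.

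The only real delicacy is the asymptotic step, where one must verify that hyperbolicity of the saddles $p$ and $q$ indeed forces the Lyapunov-type limits along the non-stationary orbit of $x$; everything else is bookkeeping with the cocycle. If one wanted to allow non-hyperbolic (e.g.\ parabolic) periodic points, the same scheme would work but the rate of divergence of $\phi$ would have to be replaced by a weaker, non-exponential one.
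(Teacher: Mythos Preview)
Your proof is correct and is essentially the paper's second argument: both locate the integer where $n\mapsto |g^n_x(v)|$ is minimal and use the cocycle identity $|g^{k+m}_x(v)|=|g^m_{f^k(x)}(G^k_x(v))|\cdot|g^k_x(v)|$ to conclude. Your derivation of $|g^n_x(v)|\to\infty$ directly from the hyperbolicity of $p,q$ is slightly more self-contained than the paper's, which instead quotes a lemma from \cite{PRH} to get $|g^n|\geq 1$ outside a finite window and then applies a finite ``minimum of cumulative products'' claim. The paper also offers a separate first proof via Theorem~\ref{p.existence}: the closure of the tangency orbit has no dominated splitting, hence contains a critical point, and that point cannot be one of the two saddles.
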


\begin{proof} We provide two proofs, one that relies on theorem \ref{p.existence} and another one that follows from an easy claim about finite products of positive numbers (which also highlights the idea beyond the proof of theorem \ref{p.existence}). For simplicity, let us assume that the tangency is associated with a pair of fixed points.

For the first proof, consider the closure of the orbit of the tangency; this set consists of the orbit of the tangency together with the two fixed points where it accumulates; that set does not have dominated splitting and so by theorem \ref{p.existence} it contains a critical point that cannot be one of the fixed points.

The second proof follows from the next claim:

\begin{claim}\label{cl:product} Let $a_1,\dots, a_N$ be a sequence of positive numbers such that $\prod_{i=1}^N a_i\geq 1;$ then there exists $K\leq N $ such that $\prod_{i=0}^{j} a_{K-i}^{-1}\geq 1$ for $j=1,\dots K-1$ and $\prod_{i=1}^{j} a_{K+i}\geq 1 $ for $j=1,\dots, N-K.$
    
\end{claim}

The proof of this claim is simple; consider all cumulative products up to the index $j,$ i.e. $P_j=\prod_{i=1}^j a_i$; take  $K$ to be an index at which the cumulative product is minimal..

To apply the claim, observe that one can take a point $x'$ in the orbit of the tangency, a vector $v'$ (the common tangent direction of the stable and unstable manifolds at point $x'$) and a positive integer $N$ such that $|g^{n}(v')|\geq 1$ for any $n<0$ or $n>N$ and $g^n(G^k(v'))\geq 1$ for any $n>0$. Observing that $|g^N(v)|=\prod_{i=0}^{N-1}|g(G^i(v))|\geq 1$, by the claim one gets a positive integer $K$ such that $ g^{-j}(G^K(v'))=\prod_{i=0}^{j} g^{-1}(G^{K-i}(v))\geq 1$ for $j=1,\dots K-1$ and $g^j(G^K(v'))=\prod_{i=1}^{j} g(G^{K+i}(v'))\geq 1 $ for $j=1,\dots N-K;$ therefore, $g^{|n|}(G^k(v'))\geq 1$ for any $n.$ 
    
\end{proof}

Observe that in case of tangencies, the notion of critical point is stronger; in fact., if $x$ is a critical point associated with an orbit of tangencies, then for its critical direction $v$ it follows that there exist $\la>1$ and a positive integer $N_0$ such that for any  $n>N_0$ and $k>0$ holds $|g^k(G^n(v))|>\la^k$ and $|g^{-k}(G^{-n}(v))|>\la^k.$ In another interpretation, one can consider the derivative along the orbit of the tangency; this yields a sequence of matrices (indexed by $\ZZ$) $(H_n)$ such that: for any $n\neq 0$ the matrix $H_n$ is diagonal with the horizontal being contracted and the vertical being expanded for any $n<0$, and conversely,  with the horizontal being expanded  and the vertical being contracted for any $n>0$, and $H_0$ being a matrix that sends the vertical into the horizontal. 


The notion of tangencies can be extended beyond periodic points; in fact, one can consider tangencies between the stable and unstable manifold of points in a hyperbolic set (as was considered in the paper by Newhouse; see \cite{N}). As noted in the introduction, Kupka–Smale diffeomorphisms, those for which all periodic points are hyperbolic and all intersections between stable and unstable manifolds of periodic points are transversal, form a residual set and therefore exhibit no heteroclinic tangencies. Nevertheless, by Newhouse's results there is an open set of surface diffeomorphisms that are not hyperbolic and do not admit a dominated splitting; the diffeomorphisms in this open set therefore contain critical points. One can ask, though, whether if they could be related to heteroclinic tangencies associated with non-periodic points in a hyperbolic set (see paragraphs iii- and  iv- in the present section).


\paragraph{{\em ii-}  Misiurewicz diffeomorphisms.}  In \cite{R} the author introduces an example of a homoclinic class that is semiconjugate to a hyperbolic set exhibiting a quadratic homoclinic tangency taking place in the limit set.  In \cite{E}, a surface diffeomorphism on the boundary of the Anosov class (and topologically conjugate to an Anosov diffeomorphism) is constructed, exhibiting a cubic tangency associated with a heteroclinic point.
  Both are  examples  of Misiurewicz diffeomorphisms.

Another example of a Misiurewicz diffeomorphism can be found in the H\'enon family $ f_{\mu,b}(x,y)=(x^2+\mu+by,x)$.  For sufficiently large negative $\mu$, this map has a hyperbolic Cantor set. Increasing $\mu$, one can find a first bifurcation parameter $\mu_0$ at which the hyperbolicity of the Cantor set is destroyed due to an internal tangency of the stable and unstable manifolds of a saddle point (see \cite{T} and \cite{BS2}).
Moreover, for the H\'enon family with $|b|<1$, the parameter set on which the topological entropy attains its maximal value $\log 2$ has the following structure (see Theorem 2 of \cite{BS}): in its interior, the set of non-escaping points to infinity forms a hyperbolic horseshoe; on its boundary, the diffeomorphism exhibits an internal tangency.


All the examples above lie on the boundary of the Axiom A class. To show that there is exactly one critical point in each example, the one associated with the tangency, it suffices to show that at every point outside the orbit of the critical point there is a well-defined cone field contracted by some iterate of $f$.

\paragraph{{\em iii-} Recurrent critical points and no exponential growth of the projective cocycle.} In \cite{HMS} it is shown that there exist diffeomorphisms in the boundary of a Axiom A surface diffeomorphisms such that the lack of hyperbolicity arises from the presence of a cubic tangency between a stable manifold and an unstable manifold, one of which is not associated with a periodic point; moreover, that tangency provides a critical point which is recurrent and so this critical point is not a heteroclinic tangency associated with points in a hyperbolic set.

The following example shows that given a horseshoe and a transverse homoclinic point (not necessarily associated with a periodic point), the diffeomorphism can be deformed so that the transverse intersection becomes a tangency, thereby producing tangencies, and hence critical points, at recurrent points. Moreover, the construction can be carried out so that the critical point does not exhibit exponential growth of the projective cocycle.

\begin{proposition} \label{recurrent critical}

There exists a $C^1$-diffeomorphism  with an invariant compact set
conjugate to the shift on $\{0,1\}^\ZZ$ and with a single critical orbit.
The critical orbit can be taken  at any non-periodic point of $\{0,1\}^\ZZ$.

Moreover, the construction can be performed in such a way that the derivative of the projective cocyle does not grow exponentially. 
\end{proposition}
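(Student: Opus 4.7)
The plan is to construct $f$ as a composition $f = \psi \circ f_0$, where $f_0$ is the standard area-preserving affine Smale horseshoe on two rectangles $R_0, R_1 \subset \RR^2$ --- so $A_i := Df_0|_{R_i} \in SL(2,\RR)$ with singular values $\lambda^{\pm 1}$, and the maximal invariant set $\Lambda_0 \subset R_0 \cup R_1$ is hyperbolic and topologically conjugate to the full shift $\sigma$ on $\{0,1\}^\ZZ$ --- and $\psi$ is a $C^1$-diffeomorphism supported in pairwise disjoint neighborhoods $U_n$ of the orbit points $x_n := f_0^n(x)$, where $x \in \Lambda_0$ is the point coded by the chosen non-periodic sequence $\bar\sigma$. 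Non-periodicity of $\bar\sigma$ ensures that the $x_n$ are pairwise distinct, so such disjoint neighborhoods exist.

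The derivatives of $\psi$ at the orbit points are chosen inductively to produce a length-preserving critical direction along $\cO(x)$. Fix any unit vector $v_0 \in T^1_x M$; given $v_n$, set $w_{n+1} := A_{\sigma_n} v_n$, pick a target unit vector $v_{n+1} \in T^1_{x_{n+1}} M$, and let $R_{n+1} \in SL(2,\RR)$ be the unique area-preserving linear map sending $w_{n+1}$ to $v_{n+1}$. Then choose $\psi$ to fix each $x_n$, to equal the identity outside $\bigcup_n U_n$, and to satisfy $D\psi_{x_n} = R_n$; this is a routine bump-function construction on each $U_n$. Setting $f := \psi \circ f_0$, the orbit $\cO(x)$ is preserved and $Df_{x_n} = R_{n+1} A_{\sigma_n}$, so inductively $Df^n_x v_0 = v_n$ has unit length for every $n \in \ZZ$, while $|\det Df^n_x| = 1$ by the area-preservation of both $A_i$ and $R_n$. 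Hence $|g^n(v_0)| = |\det Df^n_x|/|Df^n_x v_0|^2 = 1$ for all $n \in \ZZ$, so $v_0$ is a critical direction at $x$ in the sense of Definition~\ref{def:crit}, the orbit $\cO(x)$ is a critical orbit, and the projective cocycle derivative along this orbit is identically $1$ --- \emph{a fortiori} sub-exponential.

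Two checks remain. First, the maximal invariant set of $f$ inside $R_0 \cup R_1$ must still be conjugate to the full shift: if the $U_n$ are small enough, then $f$ is $C^0$-close to $f_0$, so the topological horseshoe structure (each $f(R_i) \cap R_j$ being a connected Markov subrectangle) persists, and a standard Markov-partition argument gives the conjugacy. Second, no other orbit in $\Lambda$ is critical: since $y \in \Lambda \setminus \cO(x)$ shadows $\cO(x)$ only during transient windows whose lengths are controlled by the hyperbolic separation rate of $f_0$, the cocycle $Df$ along $\cO(y)$ coincides with the uniformly hyperbolic $Df_0$ on a set of iterates of asymptotic full density, so a dominated-splitting argument forces exponentially separated rates along $\cO(y)$ and rules out any critical direction.

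The main obstacle is this last verification: the matrices $R_n$ cannot in general be taken $C^1$-small, because $v_n$ may drift far from the length-preserving direction of $Df_0$, so one cannot simply appeal to $C^1$-persistence of the horseshoe or of hyperbolicity for nearby orbits. Instead, the control must come from the total smallness of $\bigcup_n U_n$ as a subset of the plane and from sharp shadowing estimates, to ensure both that the horseshoe survives and that no orbit $y \neq x$ accumulates enough perturbation-induced non-hyperbolicity to become critical.
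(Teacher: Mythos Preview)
Your construction has a genuine gap: the map $\psi$ you build is not $C^1$, so $f=\psi\circ f_0$ is not a $C^1$-diffeomorphism. The orbit $\{x_n\}_{n\in\ZZ}$ lies in the compact horseshoe $\Lambda_0$ and therefore accumulates; at any accumulation point $p\notin\cO(x)$ one has $D\psi(p)=I$ (since $p$ lies outside every $U_n$), while along a subsequence $x_{n_k}\to p$ one has $D\psi(x_{n_k})=R_{n_k}$. Continuity of $D\psi$ at $p$ would require $R_{n_k}\to I$. But this is impossible in your setup: $R_{n+1}\in SL(2,\RR)$ must send $w_{n+1}=A_{\sigma_n}v_n$ to the unit vector $v_{n+1}$, and since $A_{\sigma_n}$ has singular values $\lambda^{\pm1}$, either $|w_{n+1}|$ stays bounded away from $1$ (forcing a singular value of $R_{n+1}$ bounded away from $1$), or $|w_{n+1}|\to 1$, which pins the direction of $v_n$ to a specific angle $\theta_0$ relative to $e_u$---but then $w_{n+1}$ lies at a \emph{different} fixed angle $\theta_1\neq\theta_0$, so $R_{n+1}$ must contain a rotation by $\theta_0-\theta_1$ bounded away from zero. (Incidentally, there is no ``unique'' $SL(2,\RR)$ map sending one nonzero vector to another; there is a one-parameter family.) You correctly flag that the $R_n$ are not $C^1$-small, but the consequence is more severe than a failure of $C^1$-persistence arguments for the horseshoe: the limit object itself fails to be $C^1$.

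The paper's proof is designed precisely to sidestep this obstruction. Rather than correcting the cocycle along the entire orbit at once, it perturbs only near the single point $x$, via an inductive sequence of fibered rotations: at step $n$ a rotation of angle $<2^{-n}$ supported in a ball $B_n$ of diameter $<2^{-n}$, chosen to halve the remaining angle between $E^u_x$ and $E^s_x$. Each step is a $C^1$-perturbation of size $O(2^{-n})$ of a hyperbolic horseshoe, so the conjugacy to the shift and a (progressively narrower) pair of invariant cone fields persist at every finite stage; the sequence $f_n$ is $C^1$-Cauchy and its limit $f$ is $C^1$ with $E^u_x=E^s_x$, producing the critical direction. The inductively maintained cone fields also yield the ``no other critical orbit'' conclusion directly, whereas in your sketch that verification remains a heuristic shadowing claim without the needed uniform control.
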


\begin{proof}
    Let us consider a dissipative diffeomorphism $f_0$ exhibiting a  classical horseshoe $K_0$ 
which is the maximal invariant set in an open domain $U$
and let us fix any non-periodic point $x\in K_0$.
We compose $f_0$ with a fibered rotation centered at $x$ with arbitrarily small
support and which almost sends $E^u_x$ to $E^s_x$.
We obtain a diffeomorphism $f_1$.
If the support $B_0$ of the rotation is small enough, a (narrow) cone field is still preserved,
the dynamics is still hyperbolic and conjugated to the shift,
the conjugacy is $C^0$-close to the identity,
the bundles $E^u,E^s$ are almost unchanged outside
the union of the $N_0$-iterates of the support $B_0$ of the rotation (where $N_0$ is large,
independent of the size of the support).
The new horseshoe $K_1$ is still the maximal invariant set in $U$.

We repeat the construction and get a sequence of diffeomorphisms $f_n$,
a sequence of horseshoes $K_n$, and a sequence of perturbation balls $B_n$.
One can suppose that the diameter of $B_n$ and the angle of the rotation that we perform at step $n$
are less than $2^{-n}$ (after step $n$ the angle between $E^s$ and $E^u$
at $x$ is less than $2^{-n-1}$, so the rotation at step $n+1$ by an angle
smaller than $2^{-n-1}$ can still close the cone).
The convergence is thus strong enough, $f_n$ converges $C^1$ to
a diffeomorphism $f$ and $K_n$ to an invariant compact set $K$,
still the maximal invariant set in $U$,
and topologically conjugate to the shift.
Of course we still have dissipation.

The diffeomorphism $f_0$ preserved two transverse cone fields.
Outside these cones $g$ contracts (in the future or in the past depending on the cone).
Outside $B_0$ the cones are disjoint, so any point outside $B_0\cup f_0(B_0)$
does not satisfy the definition of critical set, for any diffeomorphisms $f_n$
(they all coincide there).
We want to repeat this argument so that after step $n$,
the critical set is contained in $B_n$. At the limit the critical set is reduced
to $x$ (which can be checked to be critical).
After step $n$, we consider two transverse invariant cone fields,  $C^s$ and $C^u$,
that are very thin, so that for $m_n$ large enough,
$g^{m_n}$ is expanding only inside $C^s$ and $g^{-m_n}$ is expanding only inside $C^u$.
Moreover after perturbation, we suppose that $f_{n+1}$ still sends $C^u$
disjoint from $C^s$, so that no point outside $B_{n+1}$ can be critical after step $n+1$.
\end{proof}

\paragraph{{\em iv-} Critical points for non-uniform hyperbolic diffeomorphisms  in the  works of Benedicks-Carlesson and Wang-Young.}

The idea of identifying a critical set for two-dimensional maps, a set designed to play the role analogous to that of critical points in one dimension, goes back to \cite{BC}. In fact, this approach is implemented in the landmark paper of Benedicks-Carleson \cite{BC} to prove the existence of strange attractors for a positive measure set of parameters for the H\'enon family. In their setting the critical set is defined as a certain type of tangencies between the unstable manifold of a fix point and stable ones that are obtained after an inductive process and for certain parameters of the  H\'enon family (in particular, that tangency is  not a heteroclinic tangency associated with points in a hyperbolic set).
In \cite{WY}, the inductive construction of \cite{BC} is reworked, with built-in geometric properties for the critical set incorporated as part of the induction. 
The critical set introduced in \cite{WY} is intrinsically defined, characterized as the set of points on the attractor at which stable and unstable directions are interchanged. It is shown to have a special geometric structure: it is the intersection of a nested sequence of rectangles whose geometric and dynamical properties are explicit. For a precise description of the geometric properties of the critical set in 
this setting and its relation to the interchange of stable and unstable 
directions, we refer the reader to section~2 of \cite{WY}. 
Recently, the critical set has been used in \cite{BoS} to define a kneading theory for some H\'enon maps.


\paragraph{{\em v-} Critical points in Odometers and unicritical sets.}

In \cite{CLPY}, the dynamics of odometers and their renormalization is studied, introducing a class of infinitely renormalizable unicritical diffeomorphisms of the disk (with a non-degenerate ``critical point”). Odometers appear naturally for mildly dissipative diffeomorphisms of the disk. Since they do not admit a dominated splitting (see \cite{CPT}), they contain critical points. In \cite{CLPY} a it is assumed a that there is a {\it unique critical point} in the odometer and  the definition is strengthen by requiring the projective cocycle to grow exponentially along the critical direction in both time directions, i.e. $|g^n(v)| > (1+\de)^{|n|}$ for all $n$ and some $\de>0$. Under these conditions, together with unique ergodicity, one can show the existence of local stable and center-stable manifolds that are tangent at the critical point; this is used to prove that under renormalization, the dynamics converges to a unimodal one-dimensional endomorphism.

\paragraph{{\em vi-} Critical points outside the Limit set.} The classical Smale's horseshoes dynamic in the sphere provides a simple example of critical points that  are not in the limit set: the dynamics decomposes into a repelling fixed point and a trapping disk; inside the disk, the limit set is the union of a hyperbolic Cantor set and a fixed attracting point. The critical point is the ``fold'' of the unstable manifold of the hyperbolic set, lying in the basin of attraction of the attracting fixed point. This is in fact the typical situation for non-trivial dissipative Axiom A diffeomorphisms on a trapping disk. 

Another easy example arises by considering the unstable branch of a fixed hyperbolic saddle $p$ that is attracted by an attracting fixed point $q$: if the attracting fixed point has two different eigenvalues, it holds that associated with the smallest one there is a unique invariant strong-stable foliation in the local basin of attraction $q$ and observe that $|g(v)|>1$ for any vector tangent to that foliation. If the unstable manifold of $p$ is at some point tangent to this strong-stable foliation, then along the orbit of that point there is a critical point (for details, see paragraph i- in the present section and claim \ref{cl:product}). Similarly, if both eigenvalues of $q$ are equal, then for any vector $v$ in the tangent bundle of a point in the local basin of $q$ satisfies $|g^n(v)|\leq 1.$



\section{Preliminaries about dominated splitting}
\label{sec:preliminaries}
This notion was  introduced independently
by Ma\~n\'e, Liao, and Pliss,as a first step toward proving the stability conjecture: that structurally stable systems are hyperbolic. Simple examples of invariant sets of surface diffeomorphisms that exhibit a dominated splitting but are not hyperbolic include: normally hyperbolic periodic arcs; normally hyperbolic closed invariant curves on which the dynamics is conjugate to an irrational rotation; and homoclinic classes associated with non-hyperbolic periodic points. In \cite{PS2} a general description of the limit set was provided under the assumption of dominated splitting.

If $f_{|\Lambda}$ is dissipative and admits a dominated splitting $E \oplus F$ then $E$ is uniformly contracted by $Df$. Since the present paper deals with dissipative diffeomorphisms, we will write dominated splittings in the form $E^s \oplus F$, with $E^s$ uniformly contracted.

For a set $\La$ with a dominated splitting $E^s\oplus F$, there are two complementary $C^1$ local laminations: one is  the  local stable  lamination $\{W^{s}_\eps(x)\}_{x\in \La}$ satisfying $f(W^s_\e(x))\subset W^s_{\la.\e}(f(x))$ for some positive $\la$ smaller than one, and the other is the local center-unstable $\{W^{cu}_\eps(x)\}_{x\in \La}$ tangent to $F$. On the other hand, for any point $x$ and $\eps>0$ one can define   the local unstable set of size $\eps$, denoted as   $W^{u}_\eps(x)$, as the set of points $y$ such that $dist(f^{-n}(x), f^{-n}(y))<\eps$ for any positive integer $n$ and  $dist(f^{-n}(x), f^{-n}(y))\to 0$ when  $ n\to +\infty$.  It holds that in a set with dominated splitting, the local unstable is contained in the  center-unstable. 

The following lemma has been proved in \cite{PS1}.

\begin{lemma}\label{bounded-arcs} Let $\La$ be a set with dominated splitting $E\oplus F$ of a $C^2$ surface diffeomorphisms. There exist $\e>0$  and $\de>0$ such that for any  arc $J$ transversal to $E$ such that the forward iterates of $J$ remains in an $\e-$neighborhood of $\La$ and $length(f^n(J))<\de$ for all $n>0$ it holds that there is a normally hyperbolic periodic arc $I$ such that $J\subset W^s(I).$
    
\end{lemma}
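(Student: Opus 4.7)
The plan is to extract a periodic arc $I$ as an accumulation of the forward orbit of $J$, show it is tangent to $F$ and normally hyperbolic, and then use the local product structure of the dominated splitting to place $J$ inside its stable manifold.

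First I extend $E\oplus F$ to a neighborhood $U$ of $\La$ via continuous cone fields $C^E, C^F$ enjoying the quantitative domination of the splitting, so that $Df(C^F)$ is strictly contained in the interior of $C^F$ over $U$. Shrinking $\e$, the transversality hypothesis on $J$ forces $TJ\subset C^F$, and by forward invariance $Tf^n(J)\subset C^F$ for every $n\ge 0$. Because $f$ is $C^2$, the uniform length bound $\mathrm{length}(f^n(J))<\de$ and the fact that tangent vectors lie in $C^F$ yield a classical Pujals--Sambarino bounded-distortion estimate: for every $n$ and every $x,y\in J$,
\[
\Big|\log \|Df^n|_{T_xJ}\|-\log \|Df^n|_{T_yJ}\|\Big|\le K,
\]
where $K$ depends only on $\de$ and $f$. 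The key point is that the contribution to distortion at iterate $k$ is controlled by $\mathrm{length}(f^k(J))$ times the Lipschitz constant of $Df|_F$, and domination of $F$ over $E$ allows those contributions to sum geometrically rather than arithmetically.

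Next I produce the periodic arc. The uniform bound on $\mathrm{length}(f^n(J))$ together with bounded distortion gives $\underset{n\to+\infty}{\limsup}\;\tfrac{1}{n}\log\|Df^n|_{TJ}\|\le 0$ uniformly on $J$. A Pliss-type argument selects times $n_k\to\infty$ at which $f^{n_k}(J)$ has uniformly non-expanding history along $F$. Using Arzelà--Ascoli on the compact space of $C^1$ arcs of length at most $\de$ in $\overline{U}$ with tangent vectors in $C^F$, a subsequence converges to an arc $I_0\subset \La$ tangent to $F$ whose whole forward orbit is an equicontinuous family of arcs of bounded length. On such an arc, the $C^2$ regularity of $f$ together with the absence of exponential growth along $F$ allow a Denjoy-type argument, ruling out wandering arcs and producing a closed $f$-invariant arc $I$ of some period $p$, tangent to $F$, on which the induced one-dimensional dynamics is conjugate to an isometry.

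Finally I verify normal hyperbolicity and the stable manifold inclusion. Since the ambient context is dominated splittings of the form $E^s\oplus F$, the bundle $E$ is uniformly contracted over $I\subset\La$; combined with the bounded growth of $Df^p|_{TI}$, this gives normal hyperbolicity of $I$, and hence a $C^1$ local stable manifold $W^s(I)$ foliated by the stable leaves $\{W^s_\e(y)\}_{y\in I}$ supplied by the splitting. Because $f^{n_k}(J)$ accumulates on $I$ and $J$ is tangent to $C^F$, a holonomy/contraction argument based on the $E^s$-contraction and the transversality of $W^s_\e(y)$ to $C^F$ shows that every point of $J$ lies on $W^s(y)$ for some $y\in I$, giving $J\subset W^s(I)$. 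The main obstacle is the Denjoy step: extracting an honest periodic arc (not merely a recurrent arc) from bounded-length forward iterates with subexponential $F$-growth is the technical heart of the lemma and is precisely where $C^2$ regularity is essential, exactly as in Denjoy's theorem for circle maps.
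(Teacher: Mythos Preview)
The paper does not give its own proof of this lemma; it simply cites \cite{PS1}, so the comparison is with the Pujals--Sambarino argument. Your outline has the right architecture---extend the splitting by cone fields, control the geometry of the iterates $f^n(J)$, extract a limiting periodic arc via a $C^2$ Denjoy/Schwartz step, and then push $J$ into its stable manifold---and this is indeed the skeleton of the \cite{PS1} proof.

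However, your bounded-distortion step contains a genuine error. You write that ``the contribution to distortion at iterate $k$ is controlled by $\mathrm{length}(f^k(J))$ times the Lipschitz constant of $Df|_F$, and domination of $F$ over $E$ allows those contributions to sum geometrically rather than arithmetically.'' This is not correct: domination only says that $F$ grows faster \emph{relative to} $E$; it gives no lower bound on $\|Df|_F\|$ and hence no decay of $\mathrm{length}(f^k(J))$. Under the hypotheses you only know $\mathrm{length}(f^k(J))<\delta$ for each $k$, so the naive $C^{1+\mathrm{Lip}}$ distortion bound $\sum_k \mathrm{length}(f^k(J))$ can be of order $n$ and blows up. In \cite{PS1} this is exactly the delicate point: distortion along center-unstable arcs is obtained not from summing lengths but from a Schwartz-type argument using the $C^2$ regularity of the center-unstable lamination and control of the Schwarzian (Lemmas~3.4--3.5 and Section~3.3 there). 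Without that machinery your distortion constant $K$ is not justified, and everything downstream (the Pliss selection, the equicontinuous family, the stable-manifold inclusion) rests on it.

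A second, related gap is the extraction of a \emph{periodic} arc. You assert that ``a Denjoy-type argument, ruling out wandering arcs'' produces a closed invariant arc of some period, but you have not set up the ingredients Denjoy/Schwartz actually needs: a return map on a transversal, bounded distortion of iterates (which you have not established), and a wandering-interval dichotomy. In \cite{PS1} this step goes through the analysis of the $\omega$-limit of $J$ and the structure of periodic points in sets with dominated splitting; it is not a direct Arzel\`a--Ascoli limit. Also, your claim that the dynamics on $I$ is ``conjugate to an isometry'' is unwarranted---a normally hyperbolic periodic arc can carry any interval diffeomorphism---and the assertion that $E$ is uniformly contracted over $I$ requires either the ambient dissipativity assumption or an additional argument, neither of which you invoke.
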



The previous lemma has a stronger version when the set $\La$ is a continuous set. 

 \begin{lemma}\label{centerunstable2} Let $f\in Diff^2(M)$ and let $\La$ be a connected   compact invariant  with a dominated splitting $T_\La M=E^s\oplus F$. Let $U$ be a neighborhood of $\La$ where the dominated splitting can be extended. Let $J$ be a $C^1-$compact arc transversal to $E^s$ such that the forward iterates of $J$ are contained in $U.$ Then

 \begin{itemize}
     \item[--] either $length(f^n(J))\to \infty$ as $n\to +\infty,$

     \item[--] or the   length of a subsequence of  forward iterates of $J$ is uniformly bounded  above and   

     \begin{itemize}

       \item[--] either the forward iterates of $J$ converges to a normally hyperbolic periodic arc or 
         \item[--] $length(f^n(J))\to 0$ and  $J$ lies in the basin of attraction of a (semi)attracting periodic point. 
     \end{itemize}

 \end{itemize}

 \end{lemma}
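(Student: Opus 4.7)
The plan is to combine Lemma~\ref{bounded-arcs} with $C^2$ bounded distortion along nearly $F$-tangent arcs. The first alternative (lengths going to infinity) versus the second (some subsequence of lengths bounded) is a clean dichotomy, and the real work is in excluding pathological behaviour within the bounded case and splitting it into the two sub-alternatives.

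First I would use the dominated splitting to show that the tangent lines to $f^n(J)$ converge exponentially fast to the (extended) $F$-bundle in $U$; in particular for $n\ge n_0$ the arc $f^n(J)$ is $C^1$-almost tangent to $F$. The standard $C^2$ bounded-distortion property for almost $F$-tangent arcs of small length then applies: if $\eta>0$ is fixed small enough and a sub-arc $A\subset f^n(J)$ satisfies $\mathrm{length}(f^{n+j}(A))\le \eta$ for $0\le j\le m$, then the distortion of $Df^m|_A$ is bounded by a universal constant depending only on $\eta$ and $f$.

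If $\mathrm{length}(f^n(J))\to +\infty$ we are in the first case. Otherwise a subsequence $n_k$ satisfies $\mathrm{length}(f^{n_k}(J))\le L$, and I would argue that the whole sequence of lengths is uniformly bounded. Indeed, if some intermediate iterate $f^m(J)$ with $n_k<m<n_{k+1}$ had length much larger than $L$, then bounded distortion applied to a sub-arc of $f^{n_k}(J)$ that grew substantially by time $m$ would force $\mathrm{length}(f^{n_{k+1}}(J))$ itself to be much larger than $L$, contradicting the choice of the subsequence. Here the connectedness (``continuity'') of $\Lambda$ is used to ensure that the near-$F$-tangent arcs $f^n(J)$ sit inside a coherent center-unstable structure in $U$ on which the distortion estimate is uniform.

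Once the lengths are uniformly bounded, replacing $J$ by a sufficiently high iterate I may assume $\mathrm{length}(f^n(J))<\delta$ for every $n\ge 0$, where $\delta$ is the constant of Lemma~\ref{bounded-arcs}. That lemma yields a normally hyperbolic periodic arc $I$ with $J\subset W^s(I)$. Projecting along stable leaves via $\pi:W^s(I)\to I$, the asymptotic behaviour of $f^n(J)$ is governed by the one-dimensional dynamics of $f|_I$ on the sub-arc $\pi(J)\subset I$: if $\mathrm{length}(f^n(\pi(J)))$ stays bounded away from zero then $f^n(J)$ converges in Hausdorff distance to the orbit of a non-degenerate sub-arc of $I$, which is the second alternative; if $\mathrm{length}(f^n(\pi(J)))\to 0$ then $\pi(J)$ is attracted to a periodic point of $f|_I$ that is (semi-)attracting in $I$ and therefore, by the normal hyperbolicity of $I$, (semi-)attracting in the surface, yielding the third alternative. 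The hardest step is the upgrade from subsequence-boundedness to sequence-boundedness, which relies essentially on the $C^2$ distortion estimate together with the connectedness of $\Lambda$; without one of these ingredients the arc could in principle oscillate indefinitely between small and large lengths.
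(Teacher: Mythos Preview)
The paper does not actually prove this lemma: it is stated in Section~\ref{sec:preliminaries} as a preliminary fact, introduced as a strengthening of Lemma~\ref{bounded-arcs} (itself quoted from~\cite{PS1}) valid when $\Lambda$ is connected, and is evidently regarded as a standard consequence of the $C^2$ dominated-splitting theory developed in~\cite{PS1,PS2}. So there is no in-paper argument to compare against; what follows is an assessment of your proposal on its own merits.

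Your overall architecture is the right one: align $f^n(J)$ with $F$ via domination, invoke $C^2$ bounded distortion on short almost-$F$-tangent arcs, reduce to Lemma~\ref{bounded-arcs} once all lengths are below $\delta$, and read off the two sub-alternatives from the one-dimensional dynamics of $f|_I$ on the normally hyperbolic arc $I$. The last paragraph (projection $\pi$ along stable leaves and the dichotomy for $f|_I$) is correct and is exactly how one finishes.

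The genuine gap is the step you yourself flag as hardest: upgrading ``a subsequence of lengths is $\le L$'' to ``all lengths are uniformly bounded''. Your distortion argument does not close as written. You say that if $|f^m(J)|\gg L$ for some $n_k<m<n_{k+1}$, then distortion on a sub-arc of $f^{n_k}(J)$ that grew by time $m$ forces $|f^{n_{k+1}}(J)|\gg L$. But bounded distortion only controls \emph{ratios} of derivatives on arcs that stay below the fixed scale $\eta$ throughout the relevant window; it gives no lower bound on absolute lengths. A sub-arc that has reached macroscopic size at time $m$ is outside the regime where the estimate applies, and nothing you have said prevents it from contracting back below $L$ by time $n_{k+1}$. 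In other words, distortion tells you contraction is \emph{uniform} across a short arc, not that contraction is \emph{small}.

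The way this is actually handled in the Pujals--Sambarino framework is to set up a different dichotomy at the level of sub-arcs: either \emph{every} sub-arc of $J$ of positive length eventually reaches scale $\delta$, in which case a covering argument together with bounded distortion forces $|f^n(J)|\to\infty$; or \emph{some} sub-arc $J'\subset J$ satisfies $|f^n(J')|<\delta$ for all $n\ge 0$, in which case Lemma~\ref{bounded-arcs} applies to $J'$ directly and produces the normally hyperbolic periodic arc $I$ with $J'\subset W^s(I)$. One then has to argue that the \emph{whole} of $J$ is captured by $W^s(I)$, and this is where the connectedness of $\Lambda$ genuinely enters (not merely, as you write, to make distortion ``uniform'', which it already is on any compact extension of the splitting): the boundary of $W^s(I)$ in $J$ consists of points whose orbits accumulate on the endpoints of $I$, and connectedness of $\Lambda$ together with the structure of those endpoints prevents part of $J$ from escaping to a different region of $\Lambda$ while another part stays in $W^s(I)$. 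Your sketch gestures in the right direction but does not supply this mechanism.
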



The local center-unstable manifold $W^{cu}_\e(x)$, minus the point $x$, has two components that are called local center-unstable branches and denoted as $\Ga^{cu}_\e(x)$. As a consequence of lemma \ref{bounded-arcs} one can obtain the following dynamical characterization of the center  unstable  manifolds.

\begin{lemma}\label{centerunstable}
For any center-unstable branch $\Ga^{cu}_\e(x)$ of a point $x$ one has the following characterization:

\begin{itemize}

    \item[--] either $x$ is a (semi)attracting periodic point and there is $\e(x)$ such that $\Ga^{cu}_{\e(x)}$ is contained in the basin of attraction of that point,

    \item[--] or there exists $\e(x)$ and $N$ such that for any $n> N$ it holds that

    $f^{-n}(\Ga^{cu}_{\e(x)}(x))\subset \Ga^{cu}_\e(f^{-n}(x))$ and either
    \begin{itemize}
        \item $\Ga^{cu}_{\e(x)}(x)\subset W^u_\e(x)$ or

        \item $x$ is a periodic point, $\Ga^{cu}_{\e(x)}(x)$ is normally hyperbolic  and there is a sequence of periodic points of the same period of $x$ contained in $\Ga^{cu}_{\e(x)}(x)$ and accumulating on $x.$
        
    \end{itemize}

\end{itemize}

\end{lemma}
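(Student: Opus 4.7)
The plan is to analyze the backward iterates of the center-unstable branch $J := \Gamma^{cu}_{\epsilon}(x)$, splitting according to whether they remain confined to local center-unstable branches. By the $C^1$-regularity of the center-unstable lamination, for $\epsilon(x)$ small enough each $f^{-n}(\Gamma^{cu}_{\epsilon(x)}(x))$ is a $C^1$-arc through $f^{-n}(x)$ tangent to $F$ at $f^{-n}(x)$, with length $\ell_n \approx \|Df^{-n}|_{F}(x)\|\cdot\epsilon(x)$ so long as $\ell_n$ stays below the lamination size $\epsilon_0$. The main dichotomy is: either (A) for some $\epsilon(x)\leq\epsilon_0$ and some $N$, $\ell_n\leq\epsilon_0$ for all $n\geq N$, which yields the containment $f^{-n}(\Gamma^{cu}_{\epsilon(x)}(x))\subset \Gamma^{cu}_{\epsilon_0}(f^{-n}(x))$; or (B) no such $\epsilon(x)$ exists and $\|Df^{-n}|_{F}(x)\|$ is unbounded.

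In case (A), giving the second bullet of the conclusion, I further refine via $\liminf_n \ell_n$. If the liminf is zero, then along a subsequence the backward iterates shrink to $x$, which by the very definition of the local unstable set places $J\subset W^u_\epsilon(x)$ (case 2a). If the liminf is positive, the iterates $f^{-n}(J)$ are uniformly bounded above and below in length; a $C^2$-Denjoy-type argument — exactly of the flavor underlying Lemma \ref{bounded-arcs}, but applied in backward time to the arc $J$ tangent to $F$ — extracts in the limit a normally hyperbolic $f$-periodic arc $I$. The $C^2$-normal hyperbolicity then yields $x\in I$, $J\subset I$, and, via standard accumulation arguments, periodic points of the same period as $x$ accumulating on $x$ within $I$ (case 2b).

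In case (B), giving the first bullet, the unboundedness of $\|Df^{-n}|_F(x)\|$ rewrites as $\|Df^{n}|_F(f^{-n}(x))\|\to 0$ along a subsequence, and combining with the domination $\|Df|_{E^s}\|\leq\lambda\|Df|_F\|$, the full derivative cocycle along the backward orbit of $x$ collapses in norm. Extracting $f^{-n_k}(x)\to y\in\alpha(x)\subset\Lambda$ and using continuity of $Df$, the forward cocycle at $y$ is exponentially contracting, so $y$ is attracted to a (semi-)attracting periodic point $p\in\Lambda$. The compatibility of the backward almost-invariance of the center-unstable structure at $x$ with the local stable dynamics at $p$ then forces $x$ itself to lie on the periodic orbit of $p$; the branch $\Gamma^{cu}_{\epsilon(x)}(x)$, being tangent at $x$ to the weak-stable direction of $p$, lies in the basin of attraction. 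The main obstacle is precisely this last deduction: the naive continuity argument $f^{n_k}(y)\approx x$ fails because $f^{n_k}$ has potentially enormous derivatives along the orbit, so one must leverage the specific geometry of center-unstable arcs near semi-attracting orbits inside the compact invariant set $\Lambda$ to rule out $x$ being a generic non-periodic point in $W^s(p)$.
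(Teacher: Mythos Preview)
The paper does not give a detailed proof of this lemma; it is stated as ``a consequence of Lemma~\ref{bounded-arcs}'' (the $C^2$ Denjoy--type result from \cite{PS1} on arcs transversal to $E$ with bounded forward iterates) and the details are left to the reader. Your backward-iterate dichotomy is exactly the natural way to unpack that sentence, and the overall architecture --- split on whether $\ell_n=\operatorname{length}(f^{-n}J)$ stays bounded, then refine --- is correct. Two of the steps, however, are not yet proofs.

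\textbf{Case (B).} You are right that this is where the work is, but the resolution is much cleaner than you suggest. First, ``by continuity'' is not enough to pass from $\|Df^{n_k}|_F(f^{-n_k}(x))\|\to 0$ to exponential contraction at a limit point: you need Pliss's lemma to select times $m_k$ along the backward orbit at which $\|Df^{j}(f^{-m_k}(x))\|\le \lambda^{j}$ for all $0\le j\le m_k$, and then take a limit $z$ of $f^{-m_k}(x)$. This $z$ has a genuine uniformly contracted neighborhood, hence lies in the basin of a sink $p$, so $p\in\alpha(x)$. The step you flag as the obstacle is then elementary: if a sink $p$ lies in $\alpha(x)$, pick $n_k$ with $f^{-n_k}(x)$ in the immediate basin $B$; forward invariance of $B$ forces $f^{m}(x)\in B$ for every $m\ge -n_k$, hence for every $m\in\ZZ$, and then $x\in\bigcap_{n\ge 0} f^{n}(B)=\mathcal{O}(p)$. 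No appeal to ``specific geometry of center-unstable arcs'' is needed.

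\textbf{Case (A), $\liminf\ell_n>0$.} Your invocation of ``Lemma~\ref{bounded-arcs} applied in backward time'' does not work as written: for $f^{-1}$ the roles of $E$ and $F$ are exchanged, so an arc tangent to $F$ is tangent (not transversal) to the new contracted bundle, and the hypotheses of Lemma~\ref{bounded-arcs} are not met. The correct route stays in forward time: from $\ell_n\in[\delta_1,\delta_2]$ extract a limit arc $I_0$ through some $y\in\alpha(x)$, tangent to $F$, whose \emph{forward} iterates $f^m(I_0)$ are limits of $f^{m-n_k}(J)$ and hence have length $\le\delta_2$; now Lemma~\ref{bounded-arcs} applies to $I_0$ and produces the normally hyperbolic periodic arc $I$. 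One then argues as in case (B) that the attractor $I$ sitting in $\alpha(x)$ forces the backward orbit of $x$ into a neighborhood of $I$, hence $x\in I$ and is periodic (or one lands in the first bullet of the lemma if the relevant branch at $x$ lies on the attracted side of a semi-attracting endpoint of $I$). A minor related point: the refinement in case (A) should be on whether $\ell_n\to 0$, not on $\liminf\ell_n$; the definition of $W^u_\varepsilon(x)$ requires $d(f^{-n}(x),f^{-n}(y))\to 0$, which $\liminf=0$ alone does not give.
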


Previous lemma has a stronger version summarized in following remark.

\begin{remark} For recurrent non-periodic points, a stronger statement holds: both local unstable branches are ``large", or at least one is large and the other extends to an intersection with the stable manifold of a semi-attracting periodic point.

 More precisely, there is $\e>0$ such that for any recurrent non-periodic point one of the following holds
 
 \begin{itemize}
     \item[--] $W^{cu}_{\e}(x)=W^{u}_{\e}(x)$

     \item[--] one center-unstable branch, called $\Ga^{cu,+}_{\e}(x)$, coincides with $W^{u,+}_{\e}(x)$ and in the other branch $\Ga^{cu,-}_{\e}(x)$ contains a point $x'$ in $ W^s(q)$ for some   semi-attracting periodic point $q$, with $\Ga^{cu,-}_{[x, x']}(x)= W^{u,-}_{\e}(x)$, where $\Ga^{cu,-}_{[x, x']}(x)$ denotes the arc of center-unstable branch with extreme points $x$ and $x'$.   
 \end{itemize} 
    
\end{remark}


As a consequence of previous statement, one can get the following proposition that resembles the local product structure for hyperbolic sets but in the dominated splitting  case, for recurrent non-periodic points.

\begin{proposition}\label{prop:LPE} Let $\La$ be a compact invariant  set with dominated splitting  of a $C^2$ surface diffeomorphism. There exists a positive constant $\de$ such that for  any recurrent points   $x$ and $y$ at distance smaller than  $\de$, it holds that $W^{\si}_\eps(x)\cap W^{\tau}_\eps(y)\neq \emptyset$ for both $(\si,\tau)=(u,s)$ and $(\si, \tau)=(s,u)$.

\end{proposition}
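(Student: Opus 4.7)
The plan is to build a local product structure on $\Lambda$ from the dominated splitting and then use the stronger characterization of local center unstable manifolds recalled just after Lemma~\ref{centerunstable} to upgrade the obvious center unstable intersections to unstable ones. First, by the domination $E^s\oplus F$, for a uniform $\eps>0$ the families $W^s_\eps(x)$ and $W^{cu}_\eps(x)$ exist, depend continuously on $x\in\Lambda$, and meet in a uniformly transverse way. A standard implicit function / cone argument then yields $\de>0$ such that whenever $x,y\in\Lambda$ satisfy $d(x,y)<\de$ the sets $W^s_\eps(y)\cap W^{cu}_\eps(x)$ and $W^s_\eps(x)\cap W^{cu}_\eps(y)$ each consist of a single transverse intersection point; denote them $z_{xy}$ and $z_{yx}$.

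Next, I invoke the characterization: for every recurrent non-periodic $x\in\Lambda$, either $W^{cu}_\eps(x)=W^u_\eps(x)$ (both local center unstable branches are genuinely unstable), or exactly one branch of $W^{cu}_\eps(x)$ coincides with $W^{u,+}_\eps(x)$ while the other ``defective'' branch contains a point of $W^s(q)$ for some semi-attracting periodic point $q$. If $W^{cu}_\eps(y)=W^u_\eps(y)$ then $z_{yx}\in W^s_\eps(x)\cap W^u_\eps(y)$ and we are done; the symmetric remark handles the case in which $x$ is not defective, and periodic saddles fit here since their local center unstables are genuinely unstable. The remaining case is that both $x$ and $y$ are defective, and the task reduces to ruling out that $z_{xy}$ lies on the defective branch of $W^{cu}_\eps(x)$ while at the same time $z_{yx}$ lies on the defective branch of $W^{cu}_\eps(y)$.

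To exclude that simultaneous failure I intend to use an orientation argument. The defective branch of a defective point $x$ is singled out by the side on which some $W^s(q_x)$ accumulates, so after choosing a chart around $x$ that trivializes $F$ as horizontal and $E^s$ as vertical, one can label the defective branch as ``left'' or ``right''. Placing $x$ at the origin and $y$ at $(a,b)$ in such a chart, a direct computation shows that $z_{xy}$ lies on the good branch of $W^{cu}_\eps(x)$ precisely when $y$ sits on the good side of $x$, and symmetrically $z_{yx}$ lies on the good branch of $W^{cu}_\eps(y)$ precisely when $x$ sits on the good side of $y$. If the defective sides of $x$ and $y$ point in a common direction in the chart, these two conditions on the sign of $a$ are complementary, so at least one of $z_{xy}\in W^u_\eps(x)$ or $z_{yx}\in W^u_\eps(y)$ holds, which is exactly the conclusion sought.

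The hardest step will be establishing that after shrinking $\de$ the defective side is locally constant along recurrent defective points of $\Lambda$: nearby defective $x,y$ must have their bad branches pointing in the same direction of the common chart. I expect this to follow from the continuity of the center unstable lamination combined with the fact that only finitely many semi-attracting periodic orbits can accumulate $\Lambda$ inside the fixed $\eps$-neighborhood, so that near any defective point of $\Lambda$ the offending stable manifolds $W^s(q)$ lie on one definite side of $\Lambda$; this is exactly the place where dissipation and the structure theorems for dominated splittings coming from \cite{PS2} are needed, and it forms the technical heart of the argument.
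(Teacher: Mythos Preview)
Your setup is correct and you have identified the right input (the characterization following Lemma~\ref{centerunstable}), but the orientation argument is a detour that does not close the gap, and the ``locally constant defective side'' step is not the heart of the matter.

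First, even granting that nearby defective points have their bad branches pointing the same way, your argument only produces \emph{one} of the two intersections. In your own chart with both bad branches pointing right and $y$ to the right of $x$, you get $z_{yx}\in W^u_\eps(y)\cap W^s_\eps(x)$, but $z_{xy}$ lands on the defective branch of $x$ and you say nothing further about it; the conclusion $W^u_\eps(x)\cap W^s_\eps(y)\neq\emptyset$ is simply not established. The statement, read in the usual way, asks for both.

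Second, the orientation analysis is unnecessary. The key point you are missing is what lies \emph{beyond} $x'$ on the defective branch: that arc is on the basin side of the local piece of $W^s(q)$ through $x'$ (this is exactly why the local unstable set stops at $x'$). Now use that stable leaves do not cross. In a small product chart around $x$, the leaf $W^s_\eps(y)$ and the local piece of $W^s(q)$ through $x'$ are disjoint graphs over the $E^s$ direction, so either $W^s_\eps(y)$ sits between $W^s_\eps(x)$ and that piece of $W^s(q)$, forcing $z_{xy}\in[x,x']\subset W^u_\eps(x)$, or it sits on the far side, in which case $y$ itself lies in the basin of the semi-attracting point $q$; but then $\omega(y)$ is the orbit of $q$, contradicting that $y$ is recurrent and non-periodic. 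Hence $z_{xy}\in W^u_\eps(x)$ \emph{always}, and by the symmetric argument $z_{yx}\in W^u_\eps(y)$ always. Both intersections come for free, with no orientation bookkeeping and no need to prove that the defective side is locally constant. This is what the paper means by ``as a consequence of the previous statement''.
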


The following proposition follows from theorem A and corollary 4.1.1 in \cite{PS2}.

\begin{proposition}\label{PS2-1} Let $\La$ be a compact invariant set with dominated splitting  of a $C^2$ surface diffeomorphism. There exists a positive integer $N_0$ and a positive constant $\de$ such that any pair of periodic points $p, q\in \La$ is homoclinically related provided that their periods are larger than $N_0$ and their distance is smaller than  $\de$.
    
\end{proposition}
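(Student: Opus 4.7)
The plan is to combine the structural results for $C^2$ dominated splitting recalled in this preliminary section. The first step is to invoke Theorem A of \cite{PS2}, which gives an integer $N_0$ such that every periodic point in $\La$ of period larger than $N_0$ is hyperbolic. Since $\La$ has a dominated splitting $E^s\oplus F$ with $E^s$ uniformly contracted (in the dissipative case, which is the relevant one here), such a hyperbolic periodic point must be a saddle whose weak direction lies in $F$, and whose expansion along $F$ at the period admits a lower bound depending only on the domination constants and on $N_0$. Consequently, its local stable and unstable manifolds $W^s_\e(p)$ and $W^u_\e(p)$ exist with a uniform size $\e>0$.

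Second, I upgrade Lemma \ref{centerunstable} in this regime: for any saddle periodic point $p$ of period at least $N_0$, the local center-unstable branch $\Ga^{cu}_\e(p)$ is in fact equal to a genuine local unstable piece $W^u_\e(p)$ (the semi-attracting alternative of that lemma is excluded by the hyperbolicity obtained in step one). Combined with the stable lamination of size $\e$, this means that around every periodic $p$ of period $\geq N_0$ we have transverse local manifolds tangent to $E^s$ and $F$ respectively, both of uniform size.

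Third, the transversality of $E^s$ and $F$ along $\La$ is uniform: there is a definite angle bounded away from zero between the two subbundles. Therefore there exists $\de>0$, depending only on $\e$ and this angle, such that whenever two points $x,y\in\La$ are at distance less than $\de$, any $C^1$ disks through $x$ and $y$ that are tangent to narrow cones around $E^s$ and $F$ respectively intersect transversally (this is the local product structure already recorded in the proposition preceding this one, applied to our recurrent, hence periodic, points). Applying this to two saddle periodic points $p,q$ with periods at least $N_0$ and $d(p,q)<\de$, we get transverse intersections $W^s_\e(p)\cap W^u_\e(q)\neq \emp$ and $W^u_\e(p)\cap W^s_\e(q)\neq \emp$.

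Fourth, these two transverse heteroclinic intersections form a heteroclinic cycle between the hyperbolic saddles $p$ and $q$. The classical inclination lemma then forces $W^u(p)$ to accumulate on $W^u(q)$ in the $C^1$ topology, producing transverse intersections of $W^u(p)$ with $W^s(q)$ and, by the symmetric argument, of $W^u(q)$ with $W^s(p)$; hence $p$ and $q$ are homoclinically related. The main obstacle in this plan is the first step, namely extracting the uniform lower bound on the expansion along $F$ for every periodic point of period $\geq N_0$: this is exactly the content of Theorem A of \cite{PS2} and relies on the $C^2$ hypothesis through a Pliss-type argument that rules out arbitrarily long neutral orbits in the $F$-direction.
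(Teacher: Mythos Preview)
The paper does not give its own proof of this proposition; it simply records that it follows from Theorem~A and Corollary~4.1.1 of \cite{PS2}. Your sketch is a correct unpacking of that citation: the $C^2$ results of \cite{PS2} bound the period of non-hyperbolic periodic points and give uniform-size local stable and unstable manifolds for the remaining saddles, after which local product structure produces the required transverse heteroclinic intersections. Two small remarks. First, your fourth step is superfluous: once you have $W^s_\e(p)\pitchfork W^u_\e(q)\neq\emp$ and $W^u_\e(p)\pitchfork W^s_\e(q)\neq\emp$, that \emph{is} the definition of $p$ and $q$ being homoclinically related, so no further use of the inclination lemma is needed. Second, the proposition you cite parenthetically for local product structure is stated under the additional hypothesis that $\La$ is \emph{transitive}, which is not assumed here; your direct argument from uniform size of the local manifolds and uniform angle between $E^s$ and $F$ is the right one and does not require that proposition.
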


Now, we state a few simple results (used later in section \ref{sec:misiu}) about sets with dominated splitting contained in the limit set without assuming that the whole limit set has dominated splitting.

Let $\La$ be a set with dominated splitting; it is said that $z\in \La$ is an {\it interior point} if there exist recurrent points $z', z'' \in \La$ arbitrarily close to each other such that 

\begin{itemize}
    \item[--] $W^{s}_\eps(z')\cap W^u_\eps(z'')\neq \emptyset $ and $W^{u}_\eps(z')\cap W^s_\eps(z'')\neq \emptyset;$ 

    \item[--] $z$ is in the interior of the rectangle bounded by the local stable and unstable arcs of $z'$ and $z''.$ 
\end{itemize}

It is said that a set $\La$ with dominated splitting is a {\it heterocycle} if it contains a finite number of  saddle periodic orbits and any other point in $\La$ is a transversal intersection point between the stable and unstable manifolds of some  periodic orbits in $\La.$  

The following remark  is about characterizing heterocycles and is used in section \ref{sec:misiu}.

\begin{remark}\label{rmk:heterocycle}
Observe that a heterocycle provides a multi-rectangle $S$ bounded by  the stable and unstable arcs of finite number of periodic points and the points in the heterocycle are either those periodic points or are contained in   those arcs. By the classical $\la-$lemma it follows that one can get an arbitrarily small rectangle $R$ containing a periodic point of the heterocycle in one corner and bounded by two parallel stable arcs and two parallel unstable arcs of that point. Moreover, if $\omega(x)$ is a heterocycle, any sufficiently large forward iterates of $x$ are in the interior of the multi-rectangle $S$ and there are forward iterates of $x$ in the interior  of $R.$
\end{remark}



    
    

The next proposition establishes a dichotomy about the possible limit sets with dominated splitting.

\begin{proposition}\label{dicho-omega} Let $f\in Diff^2(M)$ and let $x\in M$ such that $\omega(x)$ ($\alpha(x)$) has a dominated splitting. Then either

\begin{itemize}

    \item[--] $\omega(x)$ ($\alpha(x)$)  is a periodic orbit,  moreover, either  $x$ coincides with   the periodic orbit or it belongs to the stable set of that periodic orbit;

    \item[--] $\omega(x)$ ($\alpha(x)$)  contains interior points;
    
    \item[--] $\omega(x)$ ($\alpha(x)$) is a heterocycle.
    
\end{itemize}
    
\end{proposition}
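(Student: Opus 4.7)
The plan is to split on whether $\omega(x)$ is finite or infinite, and in the infinite case to use the type of recurrent points it contains. I only write down the proof for $\omega(x)$; the argument for $\alpha(x)$ is symmetric, applied to $f^{-1}$ (for which $E^s \oplus F$ remains a dominated splitting in the reversed order).

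The first step uses that $\omega(x)$ is always chain transitive. If it is finite, then being chain transitive it must be a single periodic orbit, and the orbit of $x$ either coincides with this orbit or accumulates on it, i.e.\ belongs to its stable set. This gives the first alternative.

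Assume from now on that $\omega(x)$ is infinite. By Birkhoff, recurrent points of $f|_{\omega(x)}$ are dense in $\omega(x)$. The core of the argument is to apply to a non-periodic recurrent point $y \in \omega(x)$ the dichotomy from the discussion following Lemma \ref{centerunstable}: either $W^{cu}_\eps(y) = W^u_\eps(y)$ (symmetric type), or one center-unstable branch is truncated at the stable manifold of a semi-attracting periodic point (asymmetric type). In the symmetric case $y$ carries local stable and unstable manifolds of uniform size $\eps$; I then pick two further recurrent points $y', y'' \in \omega(x)$ arbitrarily close to $y$ and invoke the local product structure proposition to produce nonempty intersections $W^s_\eps(y') \cap W^u_\eps(y'')$ and $W^u_\eps(y') \cap W^s_\eps(y'')$. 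These four corners bound a small rectangle whose interior contains $y$, exhibiting $y$ as an interior point of $\omega(x)$ and giving the second alternative.

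The residual case is that every non-periodic recurrent point of $\omega(x)$ is asymmetric, or that all recurrent points are periodic. Here I will argue that each asymmetric non-periodic recurrent point forces a semi-attracting periodic orbit into $\omega(x)$; Proposition \ref{PS2-1} combined with the control on center-unstable arc sizes from Lemma \ref{centerunstable} should then bound the total number of periodic orbits (saddles and semi-attracting) present in $\omega(x)$, producing a finite skeleton. Density of recurrence, together with Remark \ref{rmk:heterocycle} and the $\lambda$-lemma picture there, will be used to identify every remaining point of $\omega(x)$ as a transversal heteroclinic between saddles of this skeleton, matching the definition of a heterocycle.

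The main obstacle I anticipate is precisely this final case: bounding the periodic skeleton and showing that all non-periodic points of $\omega(x)$ lie on transversal heteroclinics between saddles already in $\omega(x)$. This relies on exploiting the dissipativity implicit in the $E^s \oplus F$ form of the splitting (so $E^s$ is uniformly contracted), the homoclinic-relation output of Proposition \ref{PS2-1} for close periodic orbits of large period, and the density of recurrent points in the chain-transitive set $\omega(x)$ to close up all heteroclinic connections within $\omega(x)$ itself.
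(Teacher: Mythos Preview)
Your symmetric-case argument has a real gap: picking two recurrent points $y',y''$ close to $y$ and intersecting their local stable and unstable manifolds does produce a rectangle, but nothing in your choice forces $y$ to lie in its \emph{interior}. For that you need $y'$ and $y''$ to sit in opposite quadrants of $y$ (with respect to $W^s_\eps(y)$ and $W^u_\eps(y)$), and density of recurrent points alone does not give this. The paper handles exactly this issue by a three-point pigeonhole: for any $z\in\omega(x)$ with infinite forward orbit, take three forward iterates of $z$ that are $\delta$-close; then either one of them is enclosed by the other two (yielding an interior point), or two share a local stable leaf (forcing $z$ into the stable manifold of a periodic point), or two share a local unstable leaf (forcing, since these are arbitrarily large forward iterates, $z$ itself to be periodic). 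This trichotomy is applied to \emph{every} $z\in\omega(x)$, not just recurrent ones, and immediately gives that each point is either periodic or lies on the stable manifold of a periodic point; the symmetric backward argument puts it also on an unstable manifold.

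Your residual case is, as you yourself flag, only a sketch, and it leans on dissipativity and on bounding a ``skeleton'' via Proposition~\ref{PS2-1} in a way that is not straightforward (that proposition only homoclinically relates close periodic points of large period; it does not directly bound their number). The paper's route is shorter: once every point of $\omega(x)$ is known to be periodic or on both a stable and an unstable manifold of periodic points in $\omega(x)$, one observes that if there were infinitely many periodic orbits then an accumulation of them would itself be an interior point; otherwise there are finitely many saddles, the remaining points are heteroclinic intersections, and domination forces these intersections to be transversal---giving the heterocycle. I would recommend replacing your symmetric/asymmetric split with the three-close-iterates trichotomy; it bypasses both obstacles at once.
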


\begin{proof}
Either $\omega(x)$ is finite or infinite. If it is finite, then $\omega(x)$
is a periodic orbit and $x$ either coincides with that periodic orbit or
belongs to its stable set.

Assume now that $\omega(x)$ is infinite, and let $z\in\omega(x)$ be a
\emph{recurrent} point; such a point always exists, since $\omega(x)$ is
compact and invariant and therefore contains a minimal set, all of whose
points are recurrent. Observe that every forward iterate of $z$ is again a
recurrent point, so that proposition \ref{prop:LPE} applies to any pair of
forward iterates of $z$.

If the forward orbit of $z$ is finite, then $z$ is a periodic point. If it
is infinite, one can consider three large forward iterates of $z$ that are
arbitrarily close to each other. Using proposition \ref{prop:LPE}, either
one of the three iterates \emph{lies between the other two} as described in
the definition of interior point, or two of them share the same local stable
manifold, or two of them share the same local unstable manifold. In the
first case one gets an interior point. In the second case, $z$ belongs to
the stable manifold of a periodic orbit; since $z$ is recurrent, $z$ belongs
to $\omega(z)$, which is that periodic orbit, and so $z$ is periodic. In the
last case, since the two iterates belong to the local unstable manifold of a
periodic point but are themselves arbitrarily large forward iterates of $z$,
again $z$ must be that periodic point.

Therefore, either $\omega(x)$ contains an interior point, or every recurrent
point of $\omega(x)$ is a periodic point. Let us assume from now on that the
second option holds. In that case there are only finitely many periodic
orbits in $\omega(x)$: otherwise, by proposition \ref{PS2-1}, two of them
with sufficiently large periods and sufficiently close to each other would
be homoclinically related, producing transversal homoclinic intersections in
$\omega(x)$ and hence interior points.

Given now an arbitrary point $y\in\omega(x)$, the recurrent points of
$\omega(y)\subset \omega(x)$ are periodic and belong to the finite family of
periodic orbits above; hence $\omega(y)$, having finitely many periodic
orbits as its only recurrent points, forces $y$ to be either one of those
periodic orbits or a point of the stable manifold of one of them. Applying
the symmetric argument to the backward orbit of $y$ (the points of
$\alpha(y)$ and proposition \ref{prop:LPE} for $f^{-1}$), one concludes that
$y$ is either periodic or belongs to the unstable manifold of one of the
periodic orbits of the family.

In conclusion, $\omega(x)$ consists of finitely many periodic orbits (which
are saddles) together with points belonging simultaneously to the stable
manifold of one of them and to the unstable manifold of another (possibly
the same). Since $\omega(x)$ has a dominated splitting, these intersections
are transversal, and one gets a heterocycle.
\end{proof}

To finish, we discuss the dynamical decomposition of the limit set provided by Theorem A in \cite{PS2}. It states  that if the limit set of a $C^2$ diffeomorphisms of a compact surfaces has dominated splitting, then it can be decomposed into a finite number of isolated periodic points,  finite number of normally hyperbolic arcs, a finite number of normally hyperbolic invariant closed curves with dynamics conjugate to irrational rotation and a finite number of homoclinic classes. 
 
\begin{remark}\label{decomposition}
 
 The same result stated in Theorem A in \cite{PS2} can be obtained if one restricts the limit set to an attracting open domain. More precisely, if $f$ is a $C^2$ surface diffeomorphism and $B$ is an open set such that $\overline{f(B)}\subset B$ and the limit set of $f$ restricted to $B$ has dominated splitting, then the same thesis holds. Moreover, if $B$ is a disk and $f_{|B}$ is dissipative, then there are no normally hyperbolic closed periodic arcs.
    
\end{remark}

\section{Proof of theorem \ref{p.existence}}
\label{proof-existence}

The proof of theorem \ref{p.existence}  uses the  following two lemmas. The first (Lemma \ref{l.H}) provides, under the far-from-homotheties hypothesis, points with bounded backward (resp. forward) projective growth in some direction. The second (Lemma \ref{DScond}) characterizes the existence of a dominated splitting in terms of a uniform decay condition on the projective cocycle. We then deduce Theorem \ref{p.existence} by showing that failure of the dominated splitting condition produces a critical point.

\begin{lemma}\label{l.H}  Let $f\in Diff^1(M)$ and let $\Lambda$ be a compact invariant set such that $f_{|\La}$ is far from homotheties, then the following sets are non-empty:
$$H^+(f,\Lambda)=\{x\in\Lambda, \; \exists \,\,v\in T_x^1M,\;
\forall n\in \NN, \; |g^{n}(v)|\geq 1\},$$
$$H^-(f,\Lambda)=\{x\in\Lambda, \; \exists\,\, v\in T_x^1M,\;
\forall n\in \NN, \; |g^{-n}(v)|\geq 1\}.$$
\end{lemma}
\begin{proof}
Let us assume by contradiction that $H^+(f,\Lambda)$ is empty. Then every
unit vector $v\in T^1_\Lambda M$ has some positive iterate $n$ with
$|g^{n}(v)|<1$. By compactness of the unit tangent bundle over $\Lambda$
and continuity, there exist $N_0\geq 1$ and $\theta<1$ such that every
$v\in T^1_\Lambda M$ has some $n\leq N_0$ with $|g^{n}(v)|\leq\theta$.

We iterate this. Given $v$, choose $n_1\leq N_0$ with
$|g^{n_1}(v)|\leq\theta$; then choose $n_2\leq N_0$ with
$|g^{n_2}(G^{n_1}(v))|\leq\theta$; and so on. After $k$ blocks the total
time $T_k=n_1+\dots+n_k$ satisfies $k\leq T_k\leq kN_0$ and, by the cocycle
relation, $|g^{T_k}(v)|\leq\theta^{k}\leq\theta^{T_k/N_0}=(\theta^{1/N_0})^{T_k}$.
Writing $a=\sup_{T^1_\Lambda M}\max(|g|,|g|^{-1})$ to control the iterates
between consecutive return times $T_k$, one gets, for every $N\geq 0$,
$$|g^{N}(v)|\leq a^{N_0}\,(\theta^{1/N_0})^{N},$$
so for $N$ large enough (depending only on $N_0$ and $\theta$),
$$\forall v\in T^1_\Lambda M,\qquad |g^{N}(v)|<1.$$
This is impossible: $G^{N}_x$ is a diffeomorphism of the fiber circle
$T^1_xM$ onto $T^1_{f^{N}(x)}M$ with derivative modulus $g^{N}$, so
$\int_{T^1_xM} g^{N}\,d\theta$ equals the length of $T^1_{f^{N}(x)}M$, which
equals that of $T^1_xM$; if $g^{N}<1$ everywhere this integral would be
strictly smaller, a contradiction. Hence $H^+(f,\Lambda)\neq\emptyset$. The
non-emptiness of $H^-(f,\Lambda)$ follows by applying the same argument to
$f^{-1}$.
\end{proof}
\bigskip
The second lemma is a slight variation of a proposition  proved in \cite{PRH} (see the Main Proposition in that paper); for completeness, we provide the proof.

\begin{lemma}\label{DScond} Let $f\in Diff^1(M)$ and let $\Lambda$ be a compact invariant set such that $f_{|\La}$ is far from homotheties. It follows that
$\Lambda$ admits a dominated splitting $T_\Lambda=E\oplus F$, with $\dim(E)=\dim(F)=1$
if and only if the following condition holds.
\begin{description}
\item[(*)] For any  $\delta>0$ arbitrary small, and any  $N\geq 1$ it holds that for any $x\in \Lambda$, there exists $v\in T^1_xM$, such that
for any $m\geq 0$ one has $|g^N(G^m(v))|<(1+\delta)^N$.
\end{description}
\end{lemma}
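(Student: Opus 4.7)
The ``only if'' direction is immediate: if $T_\Lambda M=E\oplus F$ is dominated with ratio $\lambda<1$, taking $v=F(x)$ gives $G^m v=F(f^m(x))$, whence $|g^N(G^m v)|=\|Df^N|_E\|/\|Df^N|_F\|\le\lambda^N<(1+\delta)^N$ for every $\delta>0$.

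For the ``if'' direction, the plan is to use (*) and the far-from-homotheties hypothesis to build a continuous $G$-invariant line subbundle $F\subset T_\Lambda M$ on which the projective cocycle is uniformly exponentially contracted; taking any continuous complement $E$ then yields the dominated splitting.

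\smallskip\noindent\textbf{Step 1 (candidate sets).} For each $x\in\Lambda$, $N\ge 1$, $\delta>0$, set
$$\mathcal{F}^{N,\delta}_x=\bigl\{v\in T_x^1M:\ |g^N(G^m v)|<(1+\delta)^N\text{ for all }m\ge 0\bigr\}.$$
By (*) these are non-empty; they are closed, depend upper-semicontinuously on $x$, and are forward invariant: $G(\mathcal{F}^{N,\delta}_x)\subset\mathcal{F}^{N,\delta}_{f(x)}$. The cocycle property gives $|g^{kN}(v)|\le(1+\delta)^{kN}$ for every $v\in\mathcal{F}^{N,\delta}_x$ and $k\ge 1$.

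\smallskip\noindent\textbf{Step 2 (upgrade to uniform contraction).} Choose $\delta<\delta_0$, the far-from-homotheties constant. For $v\in\mathcal{F}^{N,\delta}_x$ the upper bound $|g^n(v)|<(1+\delta_0)^n$ holds for all large $n$, so the far-from-homotheties assumption forces some $n=n(v)$ with $|g^n(v)|\le(1-\delta_0)^n$. A contradiction argument—if no uniform bound $n_0$ existed, a sequence $x_k\to x$, $v_k\to v$ with $n(v_k)\to\infty$ would, by upper semicontinuity of $\mathcal{F}^{N,\delta}$, produce a limit $v\in\mathcal{F}^{N,\delta}_x$ satisfying $(1-\delta_0)^n\le|g^n(v)|\le(1+\delta)^n$ for all $n$, contradicting the hypothesis—shows that the witness time is uniformly bounded by some $n_0$. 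Iterating this along the $G$-orbit of $v$ (which stays in $\mathcal{F}$ by forward invariance) yields, after taking $N$ a multiple of $n_0$, a uniform estimate $|g^N(v)|\le(1-\gamma)^N$ with $\gamma>0$ independent of $x$ and of $v\in\mathcal{F}^{N,\delta}_x$.

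\smallskip\noindent\textbf{Step 3 (extract $F$ and conclude).} With such $N,\delta,\gamma$ fixed, forward invariance plus uniform contraction of $g^N$ on $\mathcal{F}^{N,\delta}$ makes the sets $A_m(x)=G^{mN}(\mathcal{F}^{N,\delta}_{f^{-mN}(x)})\subset\mathcal{F}^{N,\delta}_x$ a decreasing sequence of non-empty compacts in $T_x^1M$ whose diameters shrink like $(1-\gamma)^{mN}$. Their intersection is therefore a single direction $F(x)$; uniqueness combined with upper semicontinuity forces $x\mapsto F(x)$ to be continuous, and the construction makes $G(F(x))=F(f(x))$. Taking any continuous complement $E$ and using $|g^N(v)|=|\det Df^N|/|Df^N v|^2$, the bound on $F$ translates into $\|Df^N|_E\|/\|Df^N|_F\|\le(1-\gamma)^N<1$, i.e.\ $T_\Lambda M=E\oplus F$ is a dominated splitting.

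\smallskip\noindent\textbf{Main obstacle.} The crux is Step 2: converting the \emph{asymptotic} consequence of far from homotheties (each slowly-growing direction must eventually be contracted at rate $(1-\delta_0)^n$) into a \emph{uniform} exponential contraction rate valid throughout $\Lambda$. This is precisely where having a definite modulus $\delta_0$, rather than merely a vanishing Lyapunov exponent, is essential: without it one could only conclude non-expansion of $F$, not honest contraction, and the splitting constructed would fail to be dominated.
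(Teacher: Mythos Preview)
Your argument is essentially correct and follows the paper's strategy: the core is Step~2, upgrading the $(1+\delta)$-bound from (*) to a uniform $(1-\gamma)$-contraction via a compactness contradiction against the far-from-homotheties hypothesis. The paper does the same but invokes Pliss's lemma rather than your direct compactness argument to locate the limiting orbit, and then simply cites the resulting condition (equation~\eqref{dom}) as a known criterion for dominated splitting rather than constructing $F$ explicitly. Your nested-intersection construction in Step~3 is valid; the diameter-shrinking claim, while tersely justified, follows from the identity $\sin\angle(G^n u_1, G^n u_2)=|\det Df^n|\sin\angle(u_1,u_2)/(|Df^n u_1|\,|Df^n u_2|)$ together with the lower bound $|Df^n u_i|^2\ge|\det Df^n|(1-\gamma)^{-n}$ valid on $\mathcal{F}$, so that $G^n$ contracts \emph{extrinsic} angles between points of $\mathcal{F}$ and not merely arc-lengths.

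There is one genuine slip at the end of Step~3: you cannot take $E$ to be ``any continuous complement''. Such an $E$ is not $Df$-invariant, so $E\oplus F$ is not an invariant splitting at all, and the identity $g^N(F)=\|Df^N|_E\|/\|Df^N|_F\|$ fails by the angle factor in $\det Df^N$. You must take $E$ to be the unique invariant complement, which exists precisely because $F$ is a uniform projective attractor; the angle $\angle(E,F)$ is then bounded below on the compact $\Lambda$ and the domination estimate follows up to a harmless multiplicative constant. Two smaller points: the sets $\mathcal{F}^{N,\delta}_x$ as defined with strict inequality are not closed, so work with their closures; and take $N=1$ from the outset (as the paper does) so that $|g^n(v)|<(1+\delta)^n$ holds for every $n\ge 1$, making the appeal to far-from-homotheties in Step~2 clean.
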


We prove the `if' direction, the `only if' being straightforward. In terms of the `if' direction, the following remark is essential:

\begin{remark}\label{rmk:single-pair}
The proof of the `if' part of lemma \ref{DScond} only uses condition (*): if there exist $\de$ with $2\de<\de_0$
(where $\de_0$ is the constant in the far from homotheties hypothesis) and
an integer $N\geq 1$ such that for every $x\in\La$ some $v\in T^1_xM$
satisfies $|g^{N}(G^m(v))|<(1+\de)^{N}$ for all $m\geq 0$, then $\La$ admits
a dominated splitting. Consequently, if $\La$ does not admit a dominated
splitting, then for \emph{every} $\de$ with $2\de<\de_0$ and \emph{every}
$N_0\geq 1$ there exists $x\in\La$ such that for any $v\in T^1_xM$ there is
$m\geq 0$ with $|g^{N_0}(G^{m}(v))|\geq (1+2\de)^{N_0}$.
\end{remark}

Before proving the previous lemma, we show how it is used to conclude theorem \ref{p.existence}. The proof needs a classical lemma due to Pliss, of which we state a short version  (for a more general statement see lemma 3.1 in \cite{CP}):
    \vskip 3pt

\paragraph{\bf Pliss's lemma.}{\it    
Given $0 < \gamma_0 < \gamma_1$ and $a > 0$, there exist $n_0 = n_0(\gamma_0,\gamma_1,a)$ and
$l = l(\gamma_0,\gamma_1,a) > 0$ such that for $n\geq n_0$ and  any sequence of numbers $\{a_i\}, 0\leq i\leq n$  satisfying 
$a^{-1} < a_i < a$ and $\prod ^n_{i=0} a_i > \gamma_1^{n}$,  there exist $1 \leq n_1 < n_2 < ....< n_r$ with $r > ln$ such that
$$\prod ^k_{i=n_j+1}  a_i > \gamma_0^{k-n_j},\,\,\,\, n_j < k < n.$$}
    \vskip 3pt


\begin{proof}[Proof of theorem ~\ref{p.existence}]

The ``only if" part of the theorem is clear, we will prove the ``if" one.

\medskip
 Let us fix $\delta>0$ small and let us set $N_0$  large. Since
$\La$ is assumed to have no dominated splitting, lemma \ref{DScond}, in
the single-pair form established at the end of its proof,
provides a point $x\in\Lambda$ satisfying the following:

\begin{description}
\item[(**)]
for  any $v\in T^1_xM$
there is $m^+_0\geq 0$ such that $|g^{N_0}(G^{m^+_0}(v))|\geq (1+2\delta)^{N_0}$.
\end{description}
That is possible by remark \ref{rmk:single-pair}: failure of a dominated splitting yields the failure of (*) at every pair $\delta,N_0$ with $2\delta<\delta_0$.

One can also apply lemma~\ref{l.H} to the $\alpha$-limit set of $x$. So, there is a point $y\in \alpha(x)$ and $u\in T_y^1M$ such that for any positive integer $k$ it holds that $|g^{-k}(u)|\geq 1$; so, it also holds that   $|g^{-k}(u)|\geq (1-\de)^k$. Choosing $m^-$ sufficiently large in such a way that $f^{-m^-}(x)$ is sufficiently close to $y$,  and by continuity  one can get a vector $w\in T^1_{f^{-m^-}(x)}M$ close to $u\in T^1_yM$  and $N$ large such that $\forall\, 0\leq k\leq N,\; |g^{-k}(w)|\geq (1-2\delta)^k.$ Moreover, $N$ is arbitrary large provided that $f^{-m-}(x)$ is sufficiently close to $y.$

Now, we take the vector $v:=G^{m^-}(w)\in T^1_xM$. On one hand, it holds that

\begin{equation}\label{e.negative}
\forall\,\, 0\leq k\leq N,\; |g^{-k}(G^{-m^-}(v))|\geq (1-2\delta)^k.
\end{equation}

On the other hand, one can  apply property (**) to $v\in T^1_xM$, and so there is $m^+_0\geq 0$ such that $|g^{N_0}(G^{m^+_0}(v))|\geq (1+2\delta)^{N_0}$. From the fact that $N_0$ was choosen large, one can apply  Pliss lemma and  one deduces that there is $m^+\geq 0$ such that
\begin{equation}\label{e.positive}
\forall\,\, 0\leq k\leq N,\;|g^k(G^{m^+}(v))|\geq (1+\delta)^k.
\end{equation}

Consider the largest integer $m$ satisfying that  $-m^-\leq m\leq m^+$ and \eqref{e.negative} still holds, that means, $m$ satisfies
that 

\begin{equation}\label{e.negative-bis}
 \forall  0\leq k\leq N+ m-m^- , \; |g^{-k}(G^{m}(v))|\geq (1-2\delta)^k. 
\end{equation}

We show by induction on $k=0,\dots, m^+-m$ that $|g(G^{m+k}(v))|> (1+\delta):$ for $k=0$  if $|g(G^{m}(v))|<(1+\delta)$  then $|g^{-1}(G^{m+1}(v))|>(1+\delta)^{-1}$  and since $(1+\de)^{-1}> 1-2\de$, combining with  (\ref{e.negative-bis}) this gives (\ref{e.negative-bis}) but starting at  $m+1$ contradicting the maximal property of $m.$ The inductive step for $k > 0$ is identical, applied at  $m+k$ instead of $m$. Combining with   \eqref{e.positive} this gives
$$\forall \,\, 0\leq k\leq m^+-m+N,\; |g^{k}(G^{m}(v))|\geq (1+\delta)^k.$$
Using the last inequality and (\ref{e.negative-bis}) we have thus found a point $x_{\delta,N}=f^m(x)$ and a direction $v_{\delta,N}$ which satisfy
$$\forall \,\, 0\leq k\leq N,\; |g^{k}(G^{m}(v_{\delta,N}))|\geq (1+\delta)^k,$$
$$\forall\,\, 0\leq k\leq N,\; |g^{-k}(G^{m}(v_{\delta,N}))|\geq (1-2\delta)^k,$$
Considering a limit point $z_N$ and a limit direction $v_N$ (obtained by compactness of $\La$ and the unit tangent bundle) when $\delta\to 0$ one deduces that 
$$\forall \,\, k\in \{-N,\dots, N\},\; |g^{k}(v_N)|\geq 1.$$
By letting $N\to +\infty$ one gets a critical point, more precisely, a diagonal extraction as $N \to \infty$ yields $(z, v_\infty) \in T^1 \Lambda$ with $|g^k(v_\infty)| \geq 1$  for all $k \in \mathbb{Z}$ so $z\in \mathrm{Crit}(f, \Lambda)$.
\end{proof}

\begin{proof}[Proof of lemma ~\ref{DScond}] Recalling how the notion of dominated splitting can be recast in terms of the projective cocycle, to prove that a set $\La$ has a dominated splitting, it is enough to prove that there exist positive integers  $n_0$ and $m_0$ and $\de>0$ such that for any $x \in \La$ it holds that there is $v\in T^1_xM$ such that \begin{equation}\label{dom}
|g^{n_0}(G^m(v))| < (1-\de)^{n_0},\,\,\forall m>m_0.
\end{equation}
Now, by  (*),  taking $\de$  small and $N=1$ (below we consider the general case for any $N\geq 1$),   for any $x$ there is  $v$ such that  $|g(G^m(v))| < (1+\de)$ for any $ m>0.$ Observe that in particular, it holds that  \begin{equation}\label{dom1}|g^n(G^m(v))| < (1+\de)^n,\,\,\,\forall n>0, \,m>0.
\end{equation}
The goal is to show that  this vector actually satisfies equation (\ref{dom}) for some $n_0$. If it is not the case, one gets sequences $(n_i), (m_i)$ such that $|g^{n_i}(G^{m_i}(v))| \geq  (1-\de)^{n_i}$; by Pliss's lemma, for some $\de'>\de$ but close to it, there is a sequence $(m_i')$ such  that $m_i-m_i'\to \infty$ and 
\begin{equation}\label{dom2}|g^n(G^{m_i'}(v))| > (1-\de')^n,\,\,\,\, 0\leq n\leq m_i-m_i'.
\end{equation}
Taking an accumulation point  $z, w$ of the sequence $(f^{m_i'}(x)), (G^{m_i'}(v))$, one get from equations (\ref{dom1}) and (\ref{dom2}) that $(1-\de')^n\leq g^n(w)\leq (1+\de)^n$ and that contradicts the fact that $f_{|\La}$ is far from homotheties.  Choosing $\delta$
(and hence $\delta'$) less than $\delta_0$, where $\delta_0$ is the constant in the far-from-homotheties hypothesis, one gets a contradiction.

Observe that the argument above only used condition (*) for 
$\de,N=1$. For $\de,N$ with $N>1$, the same
proof applies in the same way  to $F=f^{N}$: the projective derivative cocycle of
$F$ is $g_F=g^{N}$, so the hypothesis reads as condition (*) with
 $\de_F$ and $N=1$ for $F$, where $1+\de_F=(1+\de)^{N}$, and
$F_{|\La}$ is far from homotheties by the hypothesis of theorem \ref{p.existence}. A dominated
splitting for $F$ is a dominated splitting for $f$. In particular, if
$\La$ admits no dominated splitting, then condition (*) fails for every
 $\de$ and $N>0$ with $\de$ small.

\end{proof}

\section{Far from homotheties. Proof of theorem \ref{dissipative-FFH}}
\label{sec:dissipative}

Sufficient dissipation is inherited by every power of $f$. Write
$b=\max_{x\in\La}|\det(D_xf)|$ and $M_0=\max_{x\in\La}\|D_xf\|$; so by
submultiplicativity the corresponding quantities for $f^{N}$ satisfy $b_N\leq  b^{N}$ and    $M_{0,N}\leq M_0^{N}$. Therefore, 
$$
\sqrt{b_N}\,M_{0,N}+2b_N
\leq \sqrt b\,M_0+2b<\;\frac{1}{2},
$$
so $f^{N}_{|\La}$ is sufficiently dissipative for every $N\geq 1$.

After this observation, to prove theorem \ref{dissipative-FFH}, it is shown first that for each $N$, $f^N_{|\La}$ is far from homotheties off the basin of finite number of attracting periodic points.   In other words, it is proved  the following weak  version of 
theorem \ref{dissipative-FFH} for any power  $f^N$ of $f$:

\begin{thmB}\label{dissipative-FFH2}

Let $f\in \diff^1(M^2)$ and let $\La$ be a compact invariant set such that
$f_{|\La}$ is sufficiently dissipative. Then  there is a finite number of attracting periodic points of  $f$ such that $f$ restricted to the set of
points of $\La$ that do not belong to the basin of attraction of these finite
attracting periodic orbit is far from homotheties. 
\end{thmB}

Theorem \ref{dissipative-FFH} now follows immediately: {\it since  for each positive integer $N$ it is true that $f^N_{|\La}$ is sufficiently dissipative, one can apply theorem B' and after removing a finite number of attracting periodic points, it is true that $f^N_{|\La}$ is far from homotheties; so, after removing all the attracting periodic points of $f_{|\La}$ one finds that it satisfies that all its powers are far from homotheties.}

One can restate Theorem B' in the following equivalent form, which we then prove: there exist $\de$ and $N_0$ such that for any $x\in \La$ 
\begin{itemize}
    \item[--] if there is $v\in T^1_M$ satisfying   $g^n(v)>(1-\de)^n$ for $n=1\dots N_0$ then there is $0\leq k\leq N_0$ such that $g^k(v)>(1+\de)^k,$ 

    \item[--] if there is $v\in T_M$ satisfying   $g^n(v)<(1+\de)^n$ for $n=1\dots N_0$ then there is $0\leq k\leq N_0$ such that $g^k(v)<(1-\de)^k.$ 
\end{itemize}

The structure of the section is as follows. We first show (lemma \ref{finite-basin}) that a finite orbit with frequent contraction lands in the basin of an attracting periodic point with definite local basin; by compactness, there are only finitely many such attractors (remark \ref{finite-attracting}). The technical heart of the section is proposition \ref{prop:pinning}, which gives a bound on $\|A_2 A_1\|$ for two-step compositions under a pinning condition; the bound is small precisely when the sufficient dissipation condition holds. We deduce theorem \ref{dissipative-FFH} by applying the proposition to consecutive pairs of derivatives along an orbit satisfying \eqref{FFH}, extracting frequent contraction, and invoking lemma \ref{finite-basin}. Subsection \ref{subsec:relaxing} discusses how the threshold $\sqrt{b}M_0 < 1$ might be weakened by considering longer blocks of matrices.

\begin{lemma}\label{finite-basin}
    
Let $f\in Diff(M)$ and let $\la<1$, 
there exists $r>0$ and a positive integer $m$ such that given a finite orbit $\{f^j(x)\}_{j=0}^n$ with $n$ larger than $m$ and a subsequence of iterates $x_{j_i}$ such that   either 
\begin{itemize}
    \item[--] $f(x_{j_i})=x_{j_{i+1}}$ and $||D_{x_{j_i}}f||<\la$ or

    \item[--] $f^2(x_{j_i})=x_{j_{i+1}}$ and $||D_{x_{j_i}}f^2||<\la.$
\end{itemize} 
Then $x$ is in the basin of attraction of a periodic point of period smaller and equal than $m;$ moreover, the basin of attraction of that point contains a ball of radius $r.$
\end{lemma}

\begin{proof} By the continuity of $Df$  follows that there exist $r>0$ and $\la<\la'<1$ such that if $f(x_{j_i})=x_{j_{i+1}}$ then $f(B_r(x_{j_i}))\subset B_{\la'.r}(x_{j_{i+1}})$ and if $f^2(x_{j_i})=x_{j_{i+1}}$ then $f^2(B_r(x_{j_i}))\subset B_{\la'.r}(x_{j_{i+1}})$; since one can cover the set $\La$ with finite attracting  balls of size $r$, the lemma follows.
    
\end{proof}

\begin{remark}\label{finite-attracting} For any $r$, there exists a finite number of attracting periodic points that their basin contains a ball of radius at least $r.$ 
    
\end{remark}

\begin{definition}\label{defi:pinning} Let $A_1, A_2$ be a pair of matrices in $Gl(2,\RR), $ we say that they satisfies the \emph{pinning condition} if there exists a unit vector $v \in S^1$ such that
\[
g_{A_1}(v) = 1 \qquad \text{and} \qquad g_{A_2}\bigl(G_{A_1}(v)\bigr) = 1,
\] where $G_{A_i}$ and $g_{A_i}$ are the projective map and projective derivative on the unit circle for $A_1, A_2.$   
    
\end{definition}

The next proposition establish that given a pair of matrices $A_1, A_2$ satisfying the pinning condition and a relation between their norms, then $A_2.A_1$ is a contraction.  The hypotheses on their norms, $M_1=||A_1||, M_2=||A_2||$, below ($M_1 \geq 1$, 
$M_1 M_2 \geq 1$) are dictated by the application in the proof of Theorem \ref{dissipative-FFH}: the proposition will be invoked precisely in the case where neither $A_1$
 alone nor the pair $A_2A_1$ is already contracting.

\begin{proposition}\label{prop:pinning}
Let $b$ and $M_0$ be positive real numbers with $b < \frac{1}{M_0^2}< 1 < M_0.$
Let $A_1, A_2 \in \mathrm{GL}_2(\mathbb{R})$ be invertible matrices satisfying the pinning condition with $|\det A_i| = b$, and singular values $M_i$ (maximal) and $b/M_i$ (minimal) also verifying
\[
M_i \leq M_0 \quad (i = 1, 2), \qquad M_1 \geq 1, \qquad \frac 1{M_1} \leq  M_2.
\]
 Then it follows that 
\begin{equation}\label{boundA1A2}
\|A_2 A_1\| \leq \sqrt{b}\,(M_1 + M_2) + b\left(\frac{M_1}{M_2} + \frac{M_2}{M_1}\right) +  \frac{b^{5/2}(M_1+M_2)}{(M_1 M_2)^2}.
\end{equation}

In particular,
\begin{eqnarray}\label{boundbM0}
    \|A_2 A_1\| \leq 2\sqrt{b}\,M_0 + 4b.
\end{eqnarray}
\end{proposition}

The statement of the previous proposition may  appear surprising. Each individual matrix $A_i$ has top singular value $M_i$, which under our hypotheses may be as large as $M_0$, so the trivial submultiplicative bound gives only $\|A_2 A_1\| \leq M_1 M_2 \leq M_0^2$, a quantity that is in general much larger than $1$. The theorem replaces this naive bound by an estimate of order $\sqrt b\,(M_1 + M_2)$, which is small under the assumption $2\sqrt b\,M_0 < 1$. The mechanism behind this gain is geometric, and we describe it informally before turning to the proof strategy.

\noindent\emph{The geometric mechanism.} The pinning condition $g_{A_1}(v_1) = 1$ means that $\|A_1 v_1\| = \sqrt b$, a value strictly between the minimal contraction $b/M_1$ and the maximal expansion $M_1$. In the singular frame of $A_1$ (that is described in detail later), such a vector $v_1$ must lie at a very specific angle from the contracting direction $e^1_1$, namely $\arctan(\sqrt b/M_1)$, which is small: $v_1$ is close to the direction of \emph{maximum contraction}. Correspondingly, its image $v_2 = G_{A_1}(v_1)$ in the output frame of $A_1$ is close to the direction of \emph{maximum expansion} $f^2_1$, since the small component of $v_1$ along $e^2_1$ gets amplified by $M_1$ while the large component along $e^1_1$ is crushed by $b/M_1$.

The second part of the pinning, $g_{A_2}(v_2) = 1$, forces $v_2$ to also be close to the minimal contraction direction $e^1_2$ of $A_2$ (by the analogous argument applied to $A_2$). Combining the two observations:

\begin{quote}
The direction of \emph{maximal expansion output} of $A_1$ is approximately aligned with the direction of \emph{minimal contraction input} of $A_2$   (see figure \ref{pinning}).
\end{quote}

This alignment provides the conclusion: The most expanded image of $A_1$ is close to the most contracting direction of $A_2$ (which crushes it by a factor $b/M_2$). Their product is $M_1 \cdot b/M_2 = bM_1/M_2$, a quantity comparable to $b$ rather than $M_1 M_2$. The opposite pairing, bottom output of $A_1$ feeding into top input of $A_2$ — gives the symmetric small quantity $bM_2/M_1$. Both singular values of $A_2 A_1$ end up of order $b$, not $M_i^2$.

\begin{figure}[h]
\centering
\includegraphics[scale=0.7]{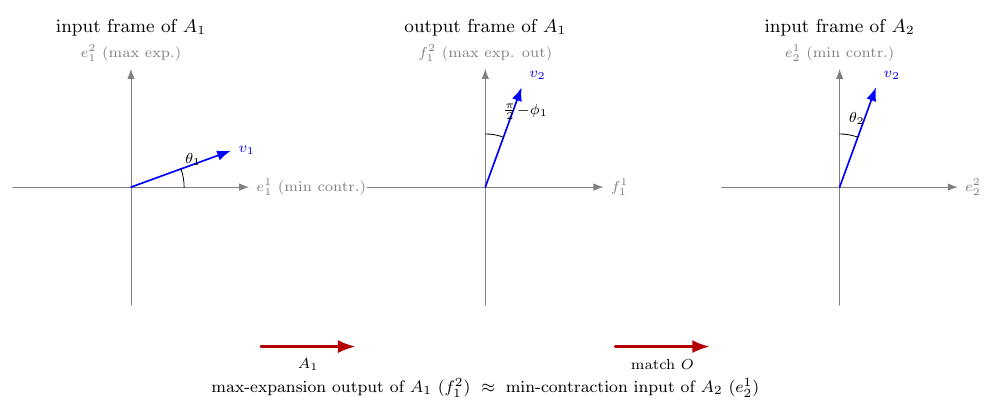}
\caption{Pinning argument}
\label{pinning}
\end{figure}

\noindent The proof formalizes this alignment by isolating the ``perfect alignment'' configuration and treating the actual configuration as a small perturbation of it.

\vskip 5pt

Let's move now to the  preparative material for the proof of proposition \ref{prop:pinning}.

\paragraph{Singular frames and projective dynamics}

Let $A$ be an invertible $2\times 2$ real matrix with $|\det A| = b > 0$ and singular values $b/M$ and $M$ for some $M > \sqrt{b}$. The singular value decomposition gives orthonormal frames defined below.

\begin{definition}[Singular frames]
The \emph{right singular frame} of $A$ is the orthonormal pair $(e^1, e^2)$ of unit vectors such that:
\begin{itemize}
\item $e^1$ is the direction of \emph{minimal contraction}: $\|A e^1\| = b/M$,
\item $e^2$ is the direction of \emph{maximal expansion}: $\|A e^2\| = M$.
\end{itemize}
The \emph{left singular frame} is the orthonormal pair $(f^1, f^2)$ defined by
\[
f^1 = \frac{A e^1}{\|A e^1\|} = \frac{M}{b}\, A e^1, \qquad f^2 = \frac{A e^2}{\|A e^2\|} = \frac{1}{M}\, A e^2.
\]
\end{definition}


\paragraph{Projective derivative and projective map in singular frame}

For $v \in S^1$ a unit vector, recall the definition of the projective map and its projective derivative associated to a linear transformation $A$:
$G_A(v) = \frac{Av}{\|Av\|}, \qquad g_A(v) = \frac{|\det A|}{\|Av\|^2} = \frac{b}{\|Av\|^2}.$
Writing $v = \cos\theta\, e^1 + \sin\theta\, e^2$ with $\theta \in [0, 2\pi)$, then
$g_A(v) = \frac{M^2}{b} \frac{1+\tan^2\theta}{1 + \frac{M^4}{b^2}\tan^2\theta}.$


In particular, let $A$ be as above with $b/M < \sqrt{b} < M$. The equation $g_A(v) = 1$, equivalently $\|Av\|^2 = b$, has exactly four solutions on $S^1$, given by $v = \pm v_+$ and $v = \pm v_-$, where
\[
v_\pm = \cos\theta\, e^1 \pm \sin\theta\, e^2 \quad \text{with} \quad \tan\theta = \frac{\sqrt{b}}{M}, \quad \theta \in (0, \pi/2).
\]


The image $w := G_A(v)$ in terms of the image singular frame is given by
\[
w = \cos\phi\, f^1 + \sin\phi\, f^2 \quad \text{with} \quad \tan\phi = \frac{M}{\sqrt{b}}, \quad \phi \in (0, \pi/2).
\]


\begin{remark}
The angle $\theta$ that $v$ makes with the contracting direction $e^1$ is small (of order $\sqrt{b}/M$), while the angle $\phi$ that $w = G_A(v)$ makes with the bottom output direction $f^1$ is large (close to $\pi/2$). Equivalently, $v$ is close to the contracting direction $e^1$ and $w$ is close to the expanding output direction $f^2$.
\end{remark}

\paragraph{Reduction to canonical coordinates (revised)}

For any orthogonal $Q$ the projective derivative satisfies $g_Q \equiv 1$, so the chain rule
\[
g_{AB}(v) = g_A\bigl(G_B(v)\bigr)\, g_B(v)
\]
implies that the pinning condition is preserved under such modifications.

We exploit this freedom as follows. Choose orthogonal $Q_1$ on the input side of $A_1$ so that the right singular frame of $A_1$ is mapped to the standard basis with the convention $
e^1_1 = (1,0) \quad (\text{minimal contraction}), \, e^2_1 = (0,1) \quad (\text{maximal expansion}).$

Choose orthogonal $Q$ between $A_1$ and $A_2$ so that the left singular frame of $A_1$ is also mapped to the standard basis: $
f^1_1 = (1,0), \qquad f^2_1 = (0,1).$ 
With these choices, $A_1$ takes the diagonal form
\[
A_1 = \begin{pmatrix} b/M_1 & 0 \\ 0 & M_1 \end{pmatrix}.
\]

For $A_2$, choose orthogonal $Q_2$ on the output side so that the left singular frame is mapped to the standard basis with the \emph{swapped} convention
$f^1_2 = (0,1)$ (image of minimal contraction direction),$f^2_2 = (1,0)$ (image of maximal expansion direction).

The right singular frame $(e^1_2, e^2_2)$ of $A_2$ remains a free orthonormal pair, to be determined by the pinning condition.

\paragraph{The matching rotation $O$:} Now we introduce an  orthogonal $O$ is defined to implement the {\it alignment} between maximal expansion of $A_1$ and minimal contraction of $A_2$.  More precisely, let $O \in SO(2)$ be the rotation that maps the vertical direction $(0,1) = f^2_1$ (the image under $A_1$ of its maximal expansion direction) to the minimal contraction direction $e^1_2$ of $A_2$: $ O(0,1)^T = e^1_2.$ By orthogonality of $O$, this also determines $O\,(1,0)^T = e^2_2$ (with sign fixed by the rotation orientation).

Under the pinning condition $g_{A_1}(v_1) = 1$ and $g_{A_2}(G_{A_1}(v_1)) = 1$, with the canonical-coordinate chosen, the rotation $O$ is given by
\[
O = \begin{pmatrix} \cos\theta & \sin\theta \\ -\sin\theta & \cos\theta \end{pmatrix},
\qquad
\theta = \arctan\!\frac{\sqrt{b}}{M_1} + \arctan\!\frac{\sqrt{b}}{M_2}.
\]
Equivalently,using that $\sin\arctan(x) = \frac{x}{\sqrt{1+x^2}},$ and $ \cos\arctan(x) = \frac{1}{\sqrt{1+x^2}},$ applied to  $x = \sqrt b/M_1$ and $x = \sqrt b/M_2$ it follows that 
\[
\sin\theta = \frac{\sqrt{b}\,(M_1 + M_2)}{\sqrt{(M_1^2 + b)(M_2^2 + b)}},
\qquad
\cos\theta = \frac{M_1 M_2 - b}{\sqrt{(M_1^2 + b)(M_2^2 + b)}}.
\]

In fact, let $v_2 := G_{A_1}(v_1)$. We locate $v_2$ from two viewpoints: The angle that $v_2$ makes with the bottom output direction $f^1_1 = (1,0)$ is $\phi_1 = \arctan(M_1/\sqrt b)$. Equivalently, the angle that $v_2$ makes with the vertical direction $f^2_1 = (0,1)$ is
$\frac{\pi}{2} - \phi_1 = \arctan\!\frac{\sqrt b}{M_1}.$ The angle that $v_2$ makes with the minimal contraction direction $e^1_2$ is $\theta_2 = \arctan\!\frac{\sqrt b}{M_2}.$

Choosing the orientation so that $v_2$ lies between the vertical $(0,1)$ and the direction $e^1_2$ on the unit circle, the total angle from $(0,1)$ to $e^1_2$ is the sum: $\theta = \arctan\!\frac{\sqrt b}{M_1} + \arctan\!\frac{\sqrt b}{M_2}.$ 
The matrix $O$ rotating $(0,1)$ to $e^1_2$ by this angle takes the stated form. 

\paragraph{The diagonal matrix $A_2 O A_1$}

Under the canonical coordinates described and with $O$ as defined before,
\begin{equation}\label{diagonal}
A_2 O A_1 = \begin{pmatrix} bM_2/M_1 & 0 \\ 0 & bM_1/M_2 \end{pmatrix}.
\end{equation}
In particular,
\[
\|A_2 O A_1\| = b\,\max\!\left(\frac{M_1}{M_2},\,\frac{M_2}{M_1}\right), \qquad |\det(A_2 O A_1)| = b^2.
\]





\paragraph{Factorization of $A_2 A_1$}

With $D := A_2 O A_1$ then,
\[
A_2 A_1 = D \cdot \bigl(A_1^{-1} O^{-1} A_1\bigr).
\]


With $A_1 = \mathrm{diag}(b/M_1, M_1)$ and $O$ rotation by angle $\theta$,
\[
A_1^{-1} O^{-1} A_1 = \begin{pmatrix} \cos\theta & -(M_1^2/b)\sin\theta \\[4pt] (b/M_1^2)\sin\theta & \cos\theta \end{pmatrix}.
\]


In the canonical coordinates,
\begin{equation}\label{A2A1}
    A_2 A_1 = \begin{pmatrix} (bM_2/M_1)\cos\theta & -M_1 M_2\,\sin\theta \\[4pt] (b^2/(M_1 M_2))\,\sin\theta & (bM_1/M_2)\cos\theta \end{pmatrix},
    \end{equation}

with $\theta= \arctan\!\frac{\sqrt b}{M_1} + \arctan\!\frac{\sqrt b}{M_2}.$

Observe that the largest entry, the off-diagonal $M_1 M_2 \sin\theta$, is bounded by $\sqrt b\,(M_1 + M_2)$ because the explicit value of $\sin\theta$ contains a compensating factor $1/(M_1 M_2)$: the smallness of the rotation angle $\theta$ exactly cancels the potentially large product $M_1 M_2$. The explicit estimates are done in what follows, using the Frobenius norm.

\begin{proof}[Proof of proposition \ref{prop:pinning}] First, we estimate the entries of $A_2.A_1$ given by equation \ref{A2A1}, and we bound each term.

\noindent\emph{Step 1: bound on each entry.}
Using $|\cos\theta| \leq 1$ and the explicit value of $\sin\theta$ ,
\[
|\sin\theta| = \frac{\sqrt b\,(M_1+M_2)}{\sqrt{(M_1^2+b)(M_2^2+b)}} \leq \frac{\sqrt b\,(M_1+M_2)}{M_1 M_2},
\]
since $(M_i^2 + b) \geq M_i^2$. Therefore:
\begin{align*}
\bigl|(A_2 A_1)_{11}\bigr| &\leq \frac{b M_2}{M_1}, \\[4pt]
\bigl|(A_2 A_1)_{12}\bigr| = M_1 M_2 |\sin\theta| &\leq \sqrt b\,(M_1+M_2), \\[4pt]
\bigl|(A_2 A_1)_{21}\bigr| = \frac{b^2}{M_1 M_2}|\sin\theta| &\leq \frac{b^{5/2}(M_1+M_2)}{(M_1 M_2)^2}, \\[4pt]
\bigl|(A_2 A_1)_{22}\bigr| &\leq \frac{b M_1}{M_2}.
\end{align*}

\medskip

\noindent\emph{Step 2: bound the operator norm by the Frobenius norm.} Using $\|A_2 A_1\| \leq \|A_2 A_1\|_F = \sqrt{\sum_{i,j} |(A_2 A_1)_{ij}|^2}$:
\[
\|A_2 A_1\|_F^2 \;\leq\; b(M_1+M_2)^2 \;+\; b^2\!\left(\frac{M_1^2}{M_2^2} + \frac{M_2^2}{M_1^2}\right) \;+\; \frac{b^5(M_1+M_2)^2}{(M_1 M_2)^4}.
\]

\medskip

\noindent\emph{Step 3: take the square root.} Using $\sqrt{x_1 + x_2 + x_3} \leq \sqrt{x_1} + \sqrt{x_2} + \sqrt{x_3}$ for $x_i \geq 0$,
\[
\|A_2 A_1\|_F \;\leq\; \sqrt b\,(M_1+M_2) \;+\; b\sqrt{\frac{M_1^2}{M_2^2} + \frac{M_2^2}{M_1^2}} \;+\; \frac{b^{5/2}(M_1+M_2)}{(M_1 M_2)^2}.
\]

\medskip

\noindent\emph{Step 4: simplify the middle term.} Using $\sqrt{a^2 + c^2} \leq a + c$ for $a, c \geq 0$,
\[
\sqrt{\frac{M_1^2}{M_2^2} + \frac{M_2^2}{M_1^2}} \;\leq\; \frac{M_1}{M_2} + \frac{M_2}{M_1}.
\]

Combining,
\[
\|A_2 A_1\| \;\leq\; \sqrt b\,(M_1+M_2) \;+\; b\!\left(\frac{M_1}{M_2} + \frac{M_2}{M_1}\right) \;+\; \frac{b^{5/2}(M_1+M_2)}{(M_1 M_2)^2}. 
\]

\noindent\emph{Step 5: bounding in terms of $b$ and $M_0$.}  A simple calculation shows that the right hand of the equation (\ref{boundA1A2})  on the rectangle $[1, M_0] \times [1/M_0, M_0]$ is realized at the corner $(M_0, M_0)$, so the maximum value is 
\[
f(M_0, M_0) = 2\sqrt b\, M_0 + 2b + \frac{2 b^{5/2}}{M_0^3}, 
\]
and from the fact that $M_0\geq \sqrt b,$ it holds that $\frac{2 b^{5/2}}{M_0^3}< 2. b$, and so 
\[
f(M_0, M_0) \leq 2\sqrt b\, M_0 + 4b. 
\]

\end{proof}

\vskip 5pt

\medskip

To understand the bound (\ref{boundA1A2}) observe that the term $\sqrt b\,(M_1+M_2)$  comes from the off-diagonal entry $M_1 M_2 \sin\theta$. The product $M_1 M_2$ is large, but $\sin\theta \approx \sqrt b\,(1/M_1 + 1/M_2)$ is small, and the cancellation gives $\sqrt b\,(M_1+M_2)$. This is the dominant term and reflects the imperfection of the alignment between the direction of maximal expansion of $A_1$ and minimal contraction of $A_2.$
 The  term $b(M_1/M_2 + M_2/M_1)$  comes from the diagonal entries, which are essentially the singular values of the perfect-alignment matrix $D$. This is the genuine size of the composition when the alignment is exact, and it dominates only when the matrices are very ``unbalanced'' relative to each other ($M_1 \gg M_2$ or vice versa).

 The condition $\sqrt b\,M_0 < 1$ ensures both terms remain manageable: it guarantees that the rotation angle is genuinely small (controlling the first term) and that the unbalance ratio $M_1/M_2$ is not too extreme (controlling the second) and this is done in the step 5 above.

 Now, before completing the proof of theorem \ref{dissipative-FFH}, we provide two remarks, the first one is about relaxing the pinning condition and the second one is about considering matrices that their determinant are not necessarily the same.

\begin{remark}\label{rmk:pinning} Theorem \ref{dissipative-FFH} assumes that the right-hand side of (\ref{boundbM0}) is smaller than one. Choose $\la_0<1$ and $\la_1$ also smaller than one but close to one and $\de>0$ small,  such that the right hand  (\ref{boundbM0}) is smaller than $\la_0$. In  inequality \ref{boundA1A2}, if one relaxes  the pinning condition to assume  that $1-\de < g_{A_1}(v)< 1+\de$ and $1-\de < g_{A_2}(G_{A_1}(v))< 1+\de$, and   if one  takes $M_1>\la_1$ and $M_2.M_1>\la_1$ it follows that the right-hand side of (\ref{boundA1A2}) is smaller than $\la_0.$ 
    
\end{remark}

\begin{remark}\label{rmk:det-normalization}
The proposition is stated for matrices with equal determinants $|\det
A_i|=b$; in the proof of theorem \ref{dissipative-FFH} it is applied to
derivatives along an orbit, whose determinants $b_i$ satisfy only $b_i\leq
b$. All the estimates in the proof are monotone non-decreasing in each
$b_i$ (in particular $\sin\theta=\frac{\sqrt{b_1}M_2+\sqrt{b_2}M_1}
{\sqrt{(M_1^2+b_1)(M_2^2+b_2)}}\leq\frac{\sqrt
b\,(M_1+M_2)}{M_1M_2}$), so the bounds (\ref{boundA1A2}) and
(\ref{boundbM0}) hold with $b=\max_{x\in\La}|\det(D_xf)|$.
\end{remark}

\begin{proof}[Proof of theorem B'] Let us consider $\la_0, \la_1$ as in the previous remark,  $\la=\max\{\la_0, \la_1\}$, and a finite orbit of a point $x$ such that there exists a vector $v$ satisfying  
\begin{equation}\label{flex-pinning}
    1-\de< g_{f^k(x)}(G^k(v))<1+\de,\,\,\mbox{for}\, k=0\dots m.
\end{equation}
    
 By induction, one can chose a subsequences of iterates  $\{x_{i_j}\}$ such that   either 
\begin{itemize}
    \item[--] $f(x_{i_j})=x_{i_{j+1}}$ and $||D_{x_{i_j}}f||<\la$ or

    \item[--] $f^2(x_{i_j})=x_{i_{j+1}}$ and $||D_{x_{i_j}}f^2||<\la,$
\end{itemize}  

Once $x_{i_j}$ has been chosen, we distinguish three cases according the norm of $D_{x_{i_j}}f$. Case (i), if $||D_{x_{i_j}}f||<\la_1$ then $x_{i_{j+1}}=f(x_{i_j})$. Case (ii),  if $||D_{x_{i_j}}f|| \geq \la_1$ and  $||D_{x_{i_j}}f||||D_{f(x_{i_j})}f||<\la_1<1$ then one takes $x_{i_{j+1}}=f^2(x_{i_j})$ and $||D_{x_{i_j}}f^2||<\la<1$. Case (iii)  if $||D_{x_{i_j}}f||||D_{f(x_{i_j})}f||\geq \la_1$ one can apply remark  \ref{rmk:pinning} and again it follows that $||D_{x_{i_j}}f^2||=||D_{x_{i_j}}f.D_{f(x_{i_j})}f||<\la_0<1.$

We now  apply lemma \ref{finite-basin}, it follows that if $n\geq m$ then  $x$ is in the basin of attraction of an attracting  periodic point that has a local basin containing a ball of radius $r$. By remark \ref{finite-attracting}, there are finitely many  attracting periodic points. So, if we considered $\La'$, the complement in $\La$ of the basins of attraction of these finite attracting periodic points, it holds that that in $\La'$ there are not finite trajectories of size larger than $m$ satisfying (\ref{flex-pinning}); therefore, taking $0<\de'$ smaller than $\de$, there is not trajectory of size larger than $m_0$ satisfying $(1-\de')^k< g^k(v)< (1+\de)^k$ for $k=0\dots n.$ In particular, no infinite forward orbit in $\Lambda'$ lies in the window, so $f|_{\Lambda'}$ is far from homotheties.

\end{proof}
\medskip

\begin{remark} Observe that the condition $\sqrt b\, M_0 < 1$ serves two purposes. In the context of proposition \ref{prop:pinning} it ensures that the rotation angle $\theta$ needed for alignment  is small enough for the optimization of $b$ and $M_0$. In the degenerate case that $\frac{b}{M_2}=M_2$, it implies that $A_2$ is $\sqrt b. O$ where $O$ and so $||A_2||=\sqrt b$  and so $||A_2.A_1||<1.$
    
\end{remark}

\subsection{Relaxing the  sufficiently dissipative condition}\label{subsec:relaxing}

The sufficient dissipation condition (\ref{cond-diss-FFH}) is used in this
section only through proposition \ref{prop:pinning}, whose threshold
$\sqrt b\,M_0<1$ is what forces the quantitative form of (\ref{cond-diss-FFH}).
We conjecture that this can be improved to only require dissipation.

\begin{conjecture}\label{conj:relaxing}
Theorem \ref{dissipative-FFH} holds assuming only that $f_{|\La}$ is
dissipative, that is, $b=\max_{x\in\La}|\det(D_xf)|<1$, without the
quantitative condition (\ref{cond-diss-FFH}).
\end{conjecture}

The mechanism behind proposition \ref{prop:pinning} is that a pinned pair of
matrices, after the optimal alignment, contracts at a rate controlled by $b$
rather than by the individual norms $M_i$. The threshold $\sqrt b\,M_0<1$
measures how much the imperfection of the alignment is allowed to relax over a
\emph{single} pair. The following non-rigorous computation suggests that, by
grouping the cocycle into longer blocks, the relaxation is getting better: the
 threshold should relax from $\sqrt b\,M_0<1$ towards to  $b<1$ as the block length grows.

\emph{The heuristic.} Consider $2^n$ matrices $A_1,\dots,A_{2^n}$ with
$|\det A_i|=b$, singular values $b/M_i\le M_i$, $M_i\in[1,M_0]$, and the
sequences of  pinning $g_{A_1}(v_1)=1$, $v_{i+1}=G_{A_i}(v_i)$,
$g_{A_{i+1}}(v_{i+1})=1$. Assume at each pairing step that the matching
rotation between adjacent singular frames is the identity (perfect
alignment), so that each pairwise product is the diagonal matrix
(\ref{diagonal}), with top singular value at most $b\,M_0$. Regrouping into
blocks of $2^k$ and iterating the pairing, the chain rule transmits the
pinning to the pair-matrices, the determinant exponent doubles at each
level while the singular-value ratios inherit a comparable bound, so a block
of $2^k$ matrices has top singular value at most $b^{2^{k-1}}M_0$ under
perfect alignment. The criterion $b^{2^{k-1}}M_0<1$, that is
$b<M_0^{-1/2^{k-1}}$, relaxes towards $b<1$ as $k\to\infty$.

\emph{What a proof would require.} Making this rigorous means controlling, at
each level of pairing, the deviation of the matching rotations from the
identity (this is the analogue of the perturbation analysis done  for a
single pair in the proof of proposition \ref{prop:pinning}) with the added
difficulty that the singular-value ratios entering level $k+1$ depend on the
alignment errors accumulated through level $k$. The errors must be shown to
decay fast enough across levels that their product remains controlled; we do
not know whether the geometric gain $b^{2^{k-1}}$ in the determinant always
dominates this accumulation. We leave this analysis for future work.

\section{Properties of the critical set}\label{sec:properties}  
\label{s.properties}



\begin{proof}[ Proof of proposition \ref{properties}:]   


To prove the first item, observe that if $f^n(c)=c$ and $v$ is the critical direction, then $|g^{k.n}(v)|\geq 1$ for any integer $k$. On the other hand, since $G^n:\TT^1\to \TT^1$ is a Mobius transformation, then $G^n$ is either hyperbolic, elliptic or parabolic. If it is hyperbolic, then either $v$ is a fixed point of $G^n$ (one repeller and one attractor, at the repeller the projective derivative grows exponentially, and in the other decrease exponentially to zero) or its forward orbit converges to the attracting  fixed point of $G^n$ and so the projective derivative  decreases exponentially to zero; in both cases, it can not hold that $|g^{k.n}(v)|\geq 1$ for every integer $k$. Therefore, $G^n$  can only be either parabolic or elliptic and so the first item of the proposition is proved.

The second item follows from the fact that if all iterates $(f^n(c), G^n(v))$ critical points and critical directions, then $g(G^n(v))=1$ for any integer $n$ and so $g^n(v)=1$ for all $n$ contradicting that $f$ is far from homotheties.

The last item follows immediately by continuity: if $(c_n)$ is a sequence of critical points of a sequence of diffeomorphisms $(g_n)$ converging to a diffeomorphism $f$, it follows that any accumulation point of the sequence $(c_n)$ is a critical point of $f.$

\end{proof}

One can get a quantitative version of  the second item of proposition \ref{properties}:







\begin{proof}[Proof of  proposition \ref{pr:compatibility}] Observe that given $\eps'>0$ there exists a positive integer $N_1$ such that if $n$ is large enough such that $f^{-j}(x)\notin B_\de({\rm Crit}(f,\La))$ for any $0\leq j\leq n$ then there is $n^*\leq N_1$ such that $f^{-n^*}(x)\in B_{\eps'}(\La_\de).$ Moreover, there is $m^*= m^*(n)$ such that $m^*(n)\to +\infty$ as $n\to \infty$ and for any $n^*\leq  j \leq m^*$ it holds that $f^{-j}(x)\in B_{\eps'}(\La_\de).$

Now, provided $\eps$, there is  $\eps'$ small $0<\ga<1$ and  $N_2$, such  that if  $f^{-j}(y)\in B_{\eps'}(\La_\de)$ for all $0\leq j\leq n$ with $n\geq N_2$ then for any $w\in T_yM$ such that $slope(w, F)>\eps$ follows $g^{-n}(w)<\ga^n.$

Now, we take $N_0$ such that if $n\geq N_0$ then it holds that for any critical $c$ if $f^{-j}(c)\notin B_\de({\rm Crit}(f,\La)), 0\leq j\leq n$ then 
\begin{itemize}
    \item[--] there is a positive integer $n^*\leq N_1$ such that  $f^{-n^*}(c)\in B_{\eps'}(\La_\de),$
    \item[--] $f^{-j}(c)\in B_{\eps'}(\La_\de)$ for any $n^*\leq j\leq m^*,$
    \item[--] $m^*-n^*\geq N_2.$
\end{itemize}
Since $n^*$ is uniformly bounded for any critical point, $N_2$ can be taken large such that  $g^{-n^*}(v_c)\ga^{N_2}<1,$ therefore, if $slope(G^{-n^*}(v_c), F)>\eps,$ it follows that  $g^{m^*}(v_c)= g^{m^*+n^*}(G^{-n^*}(v_c)).g^{-n^*}(v_c)<1,$ contradicting that $v_c$ is the critical direction.

The proof for forward iterations is similar.

\end{proof}

\vskip 5pt

Corollary \ref{cor:compatibility} follows from the proof of the previous proposition. Suppose  the conclusion of the corollary. Take $\de_2$ sufficiently small,   one gets a critical point whose pre-iterate   (the first backward visit to a neighborhood of $\La_{\de_1}$) carries a critical direction not close to the center-unstable subbundle. This contradicts Proposition \ref{pr:compatibility}.

\vskip 5pt

 \begin{proof}[Proof of proposition \ref{exponential recurrence}]

 \noindent 1. We consider $\varepsilon>0$ and a
 critical point $c\in \La$ associated with a direction $u_0$, and with recurrence $(1-\varepsilon)^n$: for some large times $n$,
it satisfies $f^n(c)\in B(c,(1-\varepsilon)^n)$.
\medskip

\noindent
Since $\La$ is dissipative, $|\det Df^j |_\La|<b^j$ for some $b\in (0,1)$ and $j\geq 1$.
\medskip

\noindent
Let $\Delta$ be an upper bound on derivatives of $G$ and $\log\|Df\|$ and $\lambda$ a lower one on $Df$.
\bigskip

\noindent
2. We choose successively $\delta,\delta',\alpha,\widehat \delta,\gamma,\beta,\rho,\eta>0$ small such that
$\widehat \delta<\delta'<\delta<\varepsilon/2$ and
\begin{equation}\label{e.choice-alpha}
(1-\varepsilon)<(1-\delta')\lambda^{\alpha},\qquad b^\alpha(1-\rho)^{-\alpha}<1-\widehat \delta,\qquad \Delta^{\gamma\alpha}(1-\delta')(1+\eta)^{\alpha}<(1-\widehat \delta),
\end{equation}
\begin{equation}\label{e.choice-beta}
\Delta^{\beta} (1-\widehat \delta)<1, \qquad b^{\beta/4}(1+\eta)<(1-\eta)^2,
\end{equation}
\begin{equation}\label{e.choice-eta}
1-\rho>b^{\gamma/3}(1+\eta)^{(1-\gamma)},\qquad (1+\eta)^2(1-\rho)^{\min(\alpha,\beta)}<1-\eta.
\end{equation}

\noindent
3. We will consider a large return time $n$ and introduce the ball $B:=B(c,(1-\delta)^n)$.\newline
Since $\underset{k\to +\infty}\limsup \tfrac 1 k \log \|Df^k|_\La\|\leq 0$,
there is $N\geq 1$ such that for $x$ in a neighborhood of $\La$,
\begin{equation}\label{e.bounds}
\|Df^{N}(x)\|\leq (1+\eta)^N.
\end{equation}
In particular, there exists $k_0$ such that
$$\forall k_0\leq k\leq 2n,\qquad f^k(B(c,(1+\delta)^n))\subset B(f^k(c),(1+\delta')^n).$$

\noindent
4. We define $m:=[\alpha n]$ and the ball $B':=B(f^{n-m}(c),(1-\delta')^n)$ so that $f^{n-m}(B)\subset B'$.\hspace{-1cm}\mbox{}
\smallskip

\noindent
From the choice of $\alpha$ in~\eqref{e.choice-alpha}, the bounds in~\eqref{e.bounds}, the recurrence in \S1, we have $c\in f^{m}(B')$.
\bigskip

\noindent
5. Let us assume that $\|Df^m\|<(1-\rho)^m$ on $B'$. The choice of $m$ and estimates in~\eqref{e.choice-eta}, \eqref{e.bounds} give
$\|Df^n|_B\|<(1+\eta)^n(1-\rho)^{\alpha n}<1.$
Since $0<\delta<\varepsilon$ and $d(f^n(c),c)<(1-\varepsilon)^n$,
the map $f^n$ is a contraction from $B$ into itself, $B$ is included in the basin of a sink and the proposition holds in this case.
\smallskip

\noindent
One can thus assume: \emph{There exist $y_0\in B'$ and a unit vector $v_0$ with} 
\begin{equation}\label{e.g}
\|Df^m.v_0\|\geq (1-\rho)^m.
\end{equation}

\noindent
6. {\bf Claim 1.}
\emph{For any $z\in f^{m}(B')$, the linear map $Df^{-m}(z)$ is not a homothety.}
\medskip

\noindent
Let $e_z$ be the less expanded direction for $Df^{-m}(z)$  (it maximizes $g^{-m}$).
Note that from \S 5, the direction $e_c$ is defined at $c$.
\medskip

\noindent
\quad {\bf Claim 2.}
\emph{The oscillations of $e_z$ on $f^{m}(B')$ are bounded by $4(1-\widehat \delta)^n$.}
\bigskip

\begin{proof}[{\rm 7.} Proof of claim 1.]
We consider $y_0$ with a direction $v_0$ as in \S5. At any $y'\in B'$ let
$v'_0$ be the direction parallel to $v_0$.
The iterates of $v_0,v'_0$ under $G^k$ are denoted $v_k,v'_k$.
\medskip

\noindent
The choice of $y_0,v_0$ and~\eqref{e.g} give $|g^m(v_0)|/|\det Df^m(y_0)|\leq (1-\rho)^{-2m}.$
\smallskip

\noindent
We decompose the orbits of $y_0, v_0$ up to time $m$
into blocks of size $N$. More precisely, to control $||Df^m||$ we split the orbit into segments of length $N$, on each of which the upper bound (\ref{e.bounds}) gives a uniform estimate. The fraction $\chi$ of 'bad' blocks (where the bound is violated) is controlled by a chain-rule estimate combined with the fact that  $|\det Df^m| < b^m$. By~\eqref{e.bounds} for any $x$ in a neighborhood of $\La$ and any direction $v$,
one has $|g^N(v)|/|\det Df^N(x)|\geq (1+\eta)^{-2N}.$
\smallskip

\noindent
Hence the density $1-\chi$ of blocks satisfying $|g^N(v_k)|/|\det Df^N(y_k)|\leq b^{-2N/3}$ satisfies
$$ (1-\rho)^{-2}\geq b^{-\tfrac 2 3\chi}(1+\eta)^{-2(1-\chi)}.$$
From~\eqref{e.choice-beta}, $\chi<\gamma$.
Moreover, for these good blocks, one gets a contraction $|g^N(v_k)|\leq b^{N/3}$.\hspace{-1cm}\mbox{}
\medskip

\noindent
Let us inductively compare $v_k$ and $v'_k$.
Note that $y_k,y'_k$ are at distance smaller than $(1-\delta')^n(1+\eta)^k$, which bounds the difference between
$Df^N(y_k)$ and $Df^N(y'_k)$.
For all the good blocks the angle between $v_k$ and $v'_k$ decreases after iteration by $Df^N(y_k)$.
For the other blocks, the angle may increase by a factor bounded by $\Delta^N$.
\medskip

\noindent
The bound $\gamma$ on the density and~\eqref{e.choice-alpha} imply that $\angle(v_k,v'_k)$ for $0\leq k\leq m$ is bounded by
$$\Delta^{\gamma m}(1-\delta')^n(1+\eta)^m<(1-\widehat \delta)^n.$$
Since the orbits of $v_0,v'_0$ remain $(1-\widehat \delta)^n$-close during $m$ iterates,
\begin{equation}\label{e.expansion}
\|Df^m.v_0'\|\geq \exp(m.\Delta.(1-\widehat \delta)^n)\|Df^m.v_0\|\geq \tfrac 1 2 (1-\rho)^m.
\end{equation}
Hence $Df^m(y')$ is not an homothety. Since $y'$ is arbitrary in $B'$, one gets claim 1.
\end{proof}
\medskip

\begin{proof}[{\rm 8.} Proof of claim 2.]
As in the previous proof, let $z=f^m(y')$ and $\theta:=\angle(v'_m,e_{z})$. Then
$$\|Df^{-m}(v'_m)\|\simeq \theta \|Df^{-m}(e^\perp_{z})\|\geq \theta |\det Df^{-m}(z)|/\|Df^{-m}(e_{z})\|.$$
$$\text{With~\eqref{e.expansion} and~\eqref{e.choice-alpha},}\qquad \theta\leq |\det Df^{m}(y')|\cdot \|Df^{m}(v'_0)\|^{-2}\leq 4b^m (1-\rho)^{-2m}
<(1-\widehat \delta)^n.$$
We have shown in the proof of claim 1 that $\angle(v_m,v'_m)<(1-\widehat \delta)^n$.
One thus concludes that the fluctuations of $e_z$ are bounded by $4 (1-\widehat \delta)^n$.
\end{proof}
\bigskip

\noindent
9. {\bf Claim 3.} \emph{$\angle(u_0,e_c)<(1-\widehat \delta)^n$.}
\begin{proof}
From~\eqref{e.expansion} in the proof of claim 1, $\|Df^{-m}.e_c\|\leq 2(1-\rho)^{-m}$.
Equation~\eqref{e.g} gives:
$$|g^{-m}(u_0)|\leq |\det Df^{-m}(c)|/(\theta\|Df^{-m}.e_c^\perp\|)^2=\|Df^{-m}.e_c\|^2|\det Df^{m}(f^{-m}(c))|/\theta^2 .$$
Since $u_0$ is the critical direction at $c$, one has $|g^{-m}(u_0)|\geq 1$, and with~\eqref{e.choice-alpha},
\begin{equation*}\theta^2\leq 4(1-\rho)^{-2m}|\det Df^{m}(f^{-m}(c))|\leq 4(1-\rho)^{-2m}b^m\leq (1-\widehat \delta)^{2n}.\tag*{\qedhere}
\end{equation*}
\end{proof}
\bigskip

\noindent
10. {\bf Claim 4.} \emph{For any $x\in B$ with a unit vector $w$
satisfying $\|Df^m(G^{n-m}(w))\|\geq (1-\rho)^m$
$$\angle(G^n(w),e_{f^n(x)})<(1-\widehat \delta)^n .$$}
The proof is the same as for claim 2.
\bigskip

\noindent
11. For any unit vector $w$ at points $x\in B$, we will prove $\|Df^{2n}.w\|<(1-\eta)^{2n}$.
\bigskip

\noindent
12. If $\|Df^m.(G^{n-m}(w))\|\leq (1-\rho)^m$, then $\|Df^{2n}.w\|\leq (1-\rho)^m(1+\eta)^{2n}\leq (1-\eta)^{2n}$ from~\eqref{e.choice-eta}.
We are thus reduced to the case where $\|Df^n.w\|> (1-\rho)^n$.
\bigskip

\noindent
13. From \S3, and claims 2, 3, 4, $d(f^n(x),c)$ and $\angle(G^n(w),u_0)$
are smaller than $6(1-\widehat \delta)^n$.
Let $\ell:=[\beta n]$.
For any $1\leq k\leq \ell$, both $d(f^{n+k}(x),f^k(c))$ and
$\angle(G^{n+k}(w),G^k(u_0))$ are bounded by $6\Delta^k.(1-\widehat \delta)^n$.
The definition of $\ell$ and~\eqref{e.choice-beta} give:
$$\|Df^\ell(G^n(w))\|\leq \|Df^\ell(u_0)\|[\exp(6\ell\Delta^\ell.(1-\widehat \delta)^n)]\leq 
2\|Df^\ell(u_0)\|.$$
Note that by~\eqref{e.g} and since $|g^\ell(u_0)|\geq 1$,
we have $\|Df^\ell(u_0)\|\leq |\det Df^\ell(c)|^{1/2}$.
One deduces $\|Df^\ell.G^n(w)\|\leq 2 |\det Df^\ell(c)|^{1/2}\leq b^{\ell/2}$, then
$\|Df^{2n}.w\|\leq b^{\ell/2} (1+\rho)^{2n}$.
Our choice of $\ell$ and \eqref{e.choice-beta} then imply $\|Df^{2n}.w\|\leq (1-\eta)^{2n}$
as required.
\bigskip

\noindent
14. We have proved the inequality $\|Df^{2n}\|<(1-\eta)^{2n}$ on the ball $B=B(c,(1-\delta)^n)$.
Since $d(c,f^n(c))<(1-\varepsilon)^n$, we have $d(f^n(c),f^{2n}(c))<(1-\varepsilon)^n(1+\eta)^n$
and $d(c,f^{2n}(c))<(1-\varepsilon)^n[1+(1+\eta)^n]$. Since $\delta<\varepsilon$,
the ball $B$ is mapped into itself by $f^{2n}$, is contracted, and $c$ belongs to the basin of a sink.
The proposition is proved.

\end{proof}

 





\section{Critical points for  mildly dissipative diffeomorphisms of the disk. Proof of theorem \ref{disk2}}\label{sec:disk}

In what follows, we assume that $\DD$ is an open disk that is properly invariant under $f$, that is, $\overline{f(\DD)}\subset \DD$ and $f: \DD\to f(\DD)$ is a diffeomorphism and $f_{|\DD}$ is mildly dissipative.

In the dissipative context, given a fixed (or periodic) point $p$  there is  a well defined local center-unstable manifold denoted as $W^{cu}_\e(p)$, and   $W^{cu}_\e(p)\setminus\{p\}$ has two connected components.  The unstable set $W^u(p)$ of $p$ is the set of points whose backward orbit
converges to $p$. The local unstable set is defined as the set of points whose backward orbit
converges to $p$ and remains in a small neighborhood of $p;$  when this set is not reduced to $p$, it is a $C^1-$curve which
contains $p$ (not necessarily in the interior). In this case, $p$ is called a saddle point. At least one of the center-unstable branches at $p$ is contained in the unstable manifold of $
p$. The other branch, if not contained in the unstable manifold, is either a normally hyperbolic arc or contained in the basin of attraction of $p$; in the latter case, $p$ is called semi-attracting.  Similarly, as  in the case of the center-unstable manifold, each connected component $\Ga^u(p)$ of $W^u(p)\setminus\{p\}$ is called an unstable branch of  $p$. 

\vskip 5pt

The proof of theorem \ref{disk2} is decomposed into a few lemmas and claims but before getting into details, we provide an outline of the proof that may help the readers as a road map.

\noindent \emph{Outline of the proof of theorem \ref{disk2}:} Let $\La^u$ be the accumulation set of the
unstable branch $\Ga^u(p)$. The argument has three stages. \emph{First}
(the far-from-homotheties reduction),  by theorem \ref{dissipative-FFH}, after removing the basins of
all attracting periodic orbits $(q_i)$ of $\La$, every power of $f$ is far
from homotheties on what remains of $\La^u$. Theorem \ref{p.existence}
applied to this sink-free part gives the first dichotomy: either it already
contains a critical point, and the theorem is proved, or it admits a
dominated splitting that can be extended to a neighborhood $U$
of the sink-free part. In the latter case the splitting implies the
far-from-homotheties property of all powers, in the neighborhood $U$; and the full orbit of every sink \emph{except
finitely many} $\{q_1,\dots,q_k\}$ already lies in $U$. It remains only to
recover the property on the local basins of these finitely many exceptional
sinks, which is the role of claim \ref{cl:att-fp}: an exceptional $q_i$ with
eigenvalues of equal modulus is itself a critical point (and we are done),
while one with eigenvalues of distinct moduli is far from homotheties, for
every power, on a ball of definite radius around its orbit. Adjoining these
finitely many balls (and finitely many of their preimages) to $U$ covers
$\La^u$; hence every power of $f$ is far from homotheties on all of $\La^u$,
and a final application of theorem \ref{p.existence} yields the dichotomy of
the theorem: \emph{either} ${\rm Crit}(f,\La^u)\neq\emptyset$, \emph{or}
$\La^u$ admits a dominated splitting $E^s\oplus F$, which we assume
henceforth. \emph{Second} (the geometry of $\La^u$),
assuming there is no critical point we analyse how $\La^u$ sits with respect
to the center-unstable foliation: lemma \ref{pre-cu-omega} shows that at
least one center-unstable branch of each point lies in $\La^u$; lemma
\ref{cu-omega} upgrades this along $\omega$-limits; and lemma
\ref{whole-centerunstable} concludes that \emph{both} branches lie in
$\La^u$ except at finitely many boundary periodic points. \emph{Third} (the
trapping region), after a perturbation that turns those finitely many
boundary points into (semi-)attracting fixed points, the local stable arcs
over $\La^u$ assemble into an open, connected, simply connected set $B$ with
$\overline{f(B)}\subset B$ and $\bigcap_{n>0}f^n(B)=\La^u$. The
decomposition of remark \ref{decomposition} applies to $f_{|B}$, and a
Poincar\'e--Bendixson argument rules out homoclinic classes and normally
hyperbolic closed arcs unless a tangency and hence a critical point in
$\Ga^u(p)$ is produced; what survives is exactly the dichotomy of the
theorem. The critical points and normally hyperbolic arcs found for the
perturbed map are valid for $f$, since the two coincide on $\La^u$.

\medskip

Now, let us proceed to formalize that outline. Let $\La^u:= \overline{\cup_{n>0} f^n(\gamma^u)}$  be the accumulation set of the branch $\Ga^u,$ where $\gamma^u$ is a fundamental domain of $\Gamma^u$.   From the fact that $f$ is dissipative it follows that $\Lambda^u$ is a cellular set (a compact connected set of the plane  whose  complement is connected). The sufficiently dissipation condition implies that after removing all the attracting periodic points $(q_i)$ in $\La$ it holds that  for any positive integer $N$, $f^N_{|\La^u\cap [\cup_{i} W^s(q_i)]^c}$ 
is far from homotheties and so one can apply theorem \ref{p.existence}: if there is a critical point in $\La^u\cap [\cup_{i} W^s(q_i)]^c$ then the proof of theorem \ref{disk2} is over, if not, then $\La^u\cap [\cup_{i} W^s(q_i)]^c$ has a dominated splitting.    Observe that this splitting can be extended to a neighborhood $U$ of $\La^u\cap [\cup_{i} W^s(q_i)]^c$, in particular, for any positive integer $N$, $f^N$ is far from homotheties in $U.$  Observe also that except for   a finite number of attracting periodic points $\{q_1, \dots, q_k\}$ in $\La^u$, the orbits of the others are fully contained in $U$. Since the result is about $\La^u$ one has to analyze now what happens in the complement  of $U$ that intersects the basins of attraction. For that, we use the following claim (the proof is straightforward from the definitions and left to the reader, see remark \ref{rmk:why-powers}).

\begin{claim}\label{cl:att-fp} Let $q$ be an attracting fixed point and let $\la$ and $\si$ be the eigenvalues of $D_qf.$
\begin{itemize}
    \item[--] if $|\la|<|\si|$ then all the powers of $f$ are far from homotheties in the local basin of attraction of $q$;

     \item[--] if $|\la|=|\si|$ then  $q$ is a critical point.
\end{itemize}
    
\end{claim}

If one of the attracting periodic points $\{q_1, \dots, q_k\}$ (taking an iterate, we may  assume that they  are   fixed) has  eigenvalues of equal moduli, then $\La^u$  has a critical point. So, we have to consider the case that all the attracting  periodic points $\{q_1, \dots, q_k\}$  have eigenvalues of distinct moduli. In that case, for $r>0$ small enough it follows that for any positive integer $N$ we have $f^N$ restricted to $B=\cup_{i=1}^k B_{r}(O(q_i))$   is far from homotheties.  Fixing a positive integer $m$, we also see that $f^N$ restricted to $B_m=\cup_{i=1}^mf^{-i}(B)$ is also far from homotheties for any positive integer $N$.  Taking $m$ large, $B_m\cup U$ covers $\La^u$, therefore $f^N_{|\La^u}$ is far from homotheties for any positive integer $N$ and so one can apply theorem \ref{p.existence}: either there are critical points in $\La^u$ or $\La^u$ has a dominated splitting. So, either the proof of theorem \ref{disk2} is complete or we continue assuming that $\La^u$ has a dominated splitting.

\medskip

The following claim was proved in \cite{CPT} (see proposition 2.7).

\begin{claim}\label{cpt-lemma} Let $f$ be mildly dissipative and let $p,q$ be  two saddle points and $\Ga^u(p)$ and $\Ga^u(q)$ two of  their branches. If the accumulation set of $\Ga^u(p)$ intersects $\Ga^u(q)$ then it follows that the accumulation set of $\Ga^u(q)$ is contained in the accumulation set of  $\Ga^u(p).$ 
    
\end{claim}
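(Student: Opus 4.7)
The plan is to show $\Gamma^u(q) \subset \Lambda^u(p)$; once this is established, the closedness and the forward $f$-invariance of $\Lambda^u(p)$ (which holds because $f(\bigcup_{n>0} f^n(\gamma^u(p))) \subset \bigcup_{n>0} f^n(\gamma^u(p))$) immediately yield $\Lambda^u(q) = \overline{\bigcup_{n>0} f^n(\gamma^u(q))} \subset \Lambda^u(p)$. Fix $x \in \Lambda^u(p) \cap \Gamma^u(q)$ and set $A := \Gamma^u(q) \cap \Lambda^u(p)$. Then $A$ is non-empty and closed in $\Gamma^u(q)$; since $\Gamma^u(q)$ is a connected arc, it suffices to show that $A$ is also open in $\Gamma^u(q)$, which forces $A = \Gamma^u(q)$.

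For openness at a point $y \in A$, I would first observe that $\overline{\Gamma^u(p)}$ is $f$-invariant in both directions (since $\Gamma^u(p)$ is the unstable manifold of the periodic point $p$). As $y \in \Gamma^u(q)$ gives $f^{-n}(y) \to q$ along $W^u_{\rm loc}(q)$, one deduces $q \in \overline{\Gamma^u(p)}$ and, more precisely, that $\Gamma^u(p)$ carries points arbitrarily close to $q$. The $\lambda$-lemma at the hyperbolic saddle $q$ would then deliver $W^u_{\rm loc}(q) \subset \overline{\Gamma^u(p)}$, and the forward $f$-invariance of $\overline{\Gamma^u(p)}$ extends this to $\Gamma^u(q) = \bigcup_{n \geq 0} f^n(W^u_{\rm loc}(q)) \subset \overline{\Gamma^u(p)}$, which provides the desired open neighborhood of $y$ in $\Gamma^u(q)$ inside $\Lambda^u(p)$ (modulo a compact initial piece near $p$ which is irrelevant for the openness statement at $y$).

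The main obstacle will be producing the transverse crossing of $\Gamma^u(p)$ with $W^s_{\rm loc}(q)$ required by the classical $\lambda$-lemma, since mere accumulation of $\Gamma^u(p)$ near $q$ does not by itself force such a crossing. This is precisely where the mild dissipation hypothesis enters: by the results of \cite{CP}, both stable branches at every saddle periodic point of a mildly dissipative diffeomorphism of the disk extend to reach $\partial \DD$, so $W^s(q)$ is a properly embedded arc that separates $\DD$. A topological argument in the disk then forces the connected, globally long curve $\Gamma^u(p)$, which accumulates on $q$, to cross $W^s(q)$. Purely tangential accumulation to $E^s(q)$ can be ruled out from the $C^1$-regularity of $\Gamma^u(p)$ together with the invariance of $E^s$ and the expansion of $\Gamma^u(p)$ under $f$: a tangency of $\Gamma^u(p)$ to $E^s$ near $q$ would persist under forward iteration and contradict the unstable-manifold nature of $\Gamma^u(p)$. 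With transversality secured, the $\lambda$-lemma delivers the needed inclusion and completes the argument.
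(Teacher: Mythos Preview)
The paper does not prove this claim; it defers to \cite{CPT}, Proposition~2.7. So there is no in-paper proof to compare against, and I evaluate your argument on its own terms.

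Your reduction is fine: showing $\Gamma^u(q)\subset \Lambda^u(p)$ suffices, and the backward-invariance step giving $q\in\overline{\Gamma^u(p)}$ is correct. The genuine gap is exactly where you flag it, and your proposed resolution does not close it. Two issues:

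\emph{(i) No crossing is forced.} From the separation property of $W^s_{\DD}(q)$ you only know $\Gamma^u(p)$ has points in one component $\DD^+$ (the side containing $\Gamma^u_{loc}(q)$). Nothing prevents $\Gamma^u(p)$ from living entirely in $\DD^+\cup W^s_\DD(q)$ and never crossing to $\DD^-$; accumulation on $q$, which sits on the separating curve, does not by itself produce a crossing. The ``topological argument in the disk'' you invoke is not supplied and does not exist at this level of generality.

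\emph{(ii) Tangencies are not excluded.} Your sentence ``a tangency of $\Gamma^u(p)$ to $E^s$ near $q$ would persist under forward iteration and contradict the unstable-manifold nature of $\Gamma^u(p)$'' is incorrect. A tangency between $W^u(p)$ and $W^s(q)$ is a heteroclinic tangency; such configurations are perfectly compatible with $\Gamma^u(p)$ being an unstable manifold and are precisely the phenomenon this paper is built around (see Lemma~\ref{lem:tang-crit} and the whole discussion of critical points). Persistence of the tangency along the orbit is true, but it contradicts nothing.

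Without a transverse (or at least topologically crossing) intersection with $W^s(q)$, the classical $\lambda$-lemma does not apply, and pointwise accumulation on $q$ only yields the discrete orbit $\{f^{-n}(x)\}\subset\Lambda^u(p)$, not the full arc $\Gamma^u(q)$. The argument in \cite{CPT} uses mild dissipation more substantively---through the separating stable arcs of points along $\Gamma^u(q)$ and the cellular structure of $\Lambda^u(p)$---rather than reducing to the $\lambda$-lemma at the single saddle $q$. You should consult that reference; as written, your proof has a hole at the decisive step.
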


From the previous claim it follows that if $q$ is a periodic point such that one branch  of the unstable manifold of $q$ intersects $\La^u$, then the whole branch is in $\La^u.$

If $\Lambda^u$ is a single point $q$, then $\La^u$  is a fixed point (if $\La^u$ is  finite, then it is a periodic point). The local center-unstable branch of $q$ that is accumulated by $\Ga^u$ can neither be contained in a normally hyperbolic arc nor be contained in the unstable manifold of $q$  otherwise, by claim \ref{cpt-lemma}, the accumulation set of $\Gamma^u$ should contain one of the unstable branches of the saddle $q$ or a normally hyperbolic invariant arc. Therefore, the fixed point is a (semi)attracting fixed point and its basin of attraction contains $\Gamma^u$. 

To conclude let us assume that $\La^u$ is not a single point and does not contain critical points. Therefore,  $f_{|\La^u}$ has dominated splitting and from the fact that $f$ is dissipative then it holds that  the splitting exhibits a contractive direction ($E^s\oplus F$). In particular, for any point in $\La^u$ there is a local stable manifold and two  center-unstable branches. The next lemma is the key lemma to prove theorem \ref{disk2}.

\begin{lemma}\label{whole-centerunstable} For any $x\in \La^u$ it holds that  there exists $\e(x)>0$ such that

\begin{itemize}
    \item[--]either  $W^{cu}_{\e(x)}(x)\subset \La^u,$ meaning that both (sufficiently small) branches of the center-unstable of $x$ are contained in $\La^u$, 

    \item[--] or only  one of the center-unstable branches  is in $\La^u$ and in that case $x$ is a periodic point, that we call boundary periodic point. 
    
\end{itemize}

    Moreover, there are only a finite number of boundary periodic points.
    
\end{lemma}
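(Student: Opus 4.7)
My plan is to first show that for every $x\in\La^u$ at least one of the two branches of its local center-unstable manifold lies in $\La^u$ near $x$, then define $B\subset \La^u$ as the set of points where exactly one branch does so, and finally to prove that $B$ is closed, forward invariant, and finite, from which periodicity of each element of $B$ follows.

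\emph{Step 1 (at least one side is in $\La^u$).} Fix $x\in\La^u$ and choose $y_n\in\Ga^u$ with $y_n\to x$. Since $\La^u$ has dominated splitting $E^s\oplus F$, the bundle $F$ extends continuously to a neighborhood of $\La^u$, and $\Ga^u$, being an unstable branch of $p$, is a $C^1$-immersed curve tangent to the extension of $F$ along $\La^u$. By uniform continuity of the center-unstable plaques, short arcs of $\Ga^u$ centered at $y_n$ of controlled length $C^1$-converge (along a subsequence) to an arc $I\ni x$ contained in $W^{cu}_{\e}(x)$. Since $I\subset\overline{\Ga^u}\subset\La^u$, at least one branch of $W^{cu}_{\e}(x)$ lies in $\La^u$ near $x$; if some choice of $y_n$ yields such an $I$ meeting both sides of $x$, one concludes $W^{cu}_{\e'}(x)\subset \La^u$ for some $0<\e'\leq \e$, which is the first alternative of the lemma.

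\emph{Step 2 (the one-sided set $B$).} Otherwise, define $B\subset\La^u$ as the set of $x$ for which, in every sufficiently small neighborhood of $x$, only one branch of $W^{cu}_{\e}(x)$ meets $\La^u$. Semicontinuity of the center-unstable lamination together with closedness of $\La^u$ show $B$ is closed; since $f$ sends center-unstable plaques to center-unstable plaques preserving the side labels, $B$ is forward invariant. It therefore suffices to prove $B$ is finite: forward invariance of a finite set forces every element to be periodic.

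\emph{Step 3 (finiteness of $B$).} Apply remark \ref{decomposition} to the limit set of $f$ restricted to a forward trapping neighborhood $U$ of $\La^u$, which exists by cellularity of $\La^u$ and mild dissipation. Since $\La^u$ carries a dominated splitting, so does the limit set in $U$, and the latter decomposes into finitely many normally hyperbolic periodic arcs, finitely many normally hyperbolic closed curves conjugate to irrational rotations, and finitely many homoclinic classes. Interior points of the periodic arcs and points on the invariant closed curves are two-sided in the lamination, and so are points of a homoclinic class, since stable and unstable manifolds of the associated periodic points accumulate each of them from both sides of the center-unstable leaf. Hence $B$ is contained in the finite collection of endpoints of the normally hyperbolic periodic arcs in $U$, which is precisely the conclusion sought.

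The main obstacle is Step 3: one must carefully verify that boundary leaves of the center-unstable lamination on $\La^u$ can only arise as endpoints of normally hyperbolic periodic arcs, ruling out non-trivial minimal laminations or complicated one-sided accumulation patterns. This relies in an essential way on the structural results of \cite{PS2} recalled in remark \ref{decomposition}, together with the absence of critical points in $\La^u$ which guarantees that the dominated splitting is globally defined and extends to a neighborhood.
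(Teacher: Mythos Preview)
Your Step~3 contains a circularity that undermines the whole argument. You invoke a forward trapping neighborhood $U$ of $\La^u$ in order to apply remark~\ref{decomposition}, but the only way the paper (or anyone) produces such a neighborhood is \emph{after} Lemma~\ref{whole-centerunstable} is in hand: the trapping region $B=\bigcup_{x\in\La^u} \overset{\circ}{B(x)}$ is built from boxes $B(x)=\bigcup_{y\in W^{cu}_{\e(x)}(x)} W^s_\de(y)$, and these are open neighborhoods of $x$ precisely because the full arc $W^{cu}_{\e(x)}(x)$ already lies in $\La^u$. Cellularity and mild dissipation do not by themselves give a trapping neighborhood; for instance, once $\Ga^u$ returns near $p$ one has $p\in\La^u$, and $W^s_\DD(p)$ reaches $\partial\DD$, so $\La^u$ is typically not Lyapunov stable. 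There is a second gap even if $U$ were granted: remark~\ref{decomposition} describes the \emph{limit set} of $f|_U$, not $\La^u$. A point of $\La^u$ (say a point of $\Ga^u$ falling into the basin of a sink) need not be in the limit set, and the two-sidedness you need is membership of both center-unstable branches in $\La^u$, not in the limit set; your assertion that points of a homoclinic class are two-sided conflates the two. Finally, the closedness of $B$ claimed in Step~2 is not justified by semicontinuity alone: a limit of one-sided points could a priori be two-sided.

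The paper's route is quite different and more hands-on. After your Step~1 (their Lemma~\ref{pre-cu-omega}, argued via mild dissipation rather than tangency of $\Ga^u$ to $F$), they first prove an intermediate Lemma~\ref{cu-omega}: for $x\in\La^u$ and $z\in\omega(x)$, either both branches at $z$ lie in $\La^u$ or $z$ is periodic. The proof of Lemma~\ref{whole-centerunstable} then takes $y\in\alpha(x)$ and $z\in\omega(y)\subset\alpha(x)$, applies Lemma~\ref{cu-omega} at $z$, and pushes the two-sided information from $z$ back to $x$ by iterating center-unstable sub-arcs and invoking Lemma~\ref{centerunstable2} together with Claim~\ref{attracting}, through a case analysis on whether lengths of iterated arcs stay bounded below. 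Finiteness of boundary periodic points is not obtained from a global decomposition but by bounding their periods via Proposition~\ref{PS2-1}: three nearby high-period saddles in a dominated set are homoclinically related, and the $\la$-lemma then forces both unstable branches of the middle one into $\La^u$, a contradiction.
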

The proof of the previous lemma goes through two lemmas; the first one  states that at least one center-unstable branch is in $\La^u$ (lemma \ref{pre-cu-omega} whose proof needs claim \ref{attracting}), the second  states a similar statement to lemma \ref{whole-centerunstable} but for points in the omega limit of another point in $\La^u$ (lemma \ref{cu-omega},  its proof also needs claim \ref{attracting}).

\begin{claim}\label{attracting} Let $z$ be a (semi)attracting periodic point and let $x$ be a point in $\La^u$ and in the local basin of attraction of $z.$ Then there is a normally hyperbolic arc contained in $\La^u$ such that one of the extreme point is $z.$ 
    
\end{claim}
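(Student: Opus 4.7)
The plan is to produce, by an accumulation argument on iterates of $\Ga^u$ near $z$, a $C^1$ arc $\tilde J\subset\La^u$ through $z$ tangent to the center-unstable direction $F$, and then to invoke Lemma~\ref{bounded-arcs} to upgrade a sub-arc of $\tilde J$ to (the stable set of) a normally hyperbolic periodic arc having $z$ as an endpoint.

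Since $\La^u$ is closed and forward invariant and $f^n(x)\to z$, one has $z\in\La^u$. Because $x\in\La^u$ is accumulated by iterates of $\Ga^u$, one can choose $y_k\in\gamma^u$ and integers $N_k\to\infty$ with $f^{N_k}(y_k)\to x$; iterating further forward yields a sequence $f^{N_k+\ell_k}(y_k)\to z$, so after relabeling we may directly assume $f^{N_k}(y_k)\to z$. Each $f^{N_k}(y_k)$ lies on $W^u(p)$, and because $\La^u$ carries a dominated splitting $E^s\oplus F$, the tangent to $f^{N_k}(\Ga^u)$ at $f^{N_k}(y_k)$ aligns exponentially fast with $F$. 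Taking pieces $J_k\subset f^{N_k}(\Ga^u)$ of a fixed length $L>0$ centered at $f^{N_k}(y_k)$ and extracting a $C^1$-subsequential limit, I obtain a $C^1$ arc $\tilde J$ of length $L$ through $z$, tangent to $F$, and contained in the closed set $\La^u$.

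I then let $\tilde J_0\subset\tilde J$ be the sub-arc with $z$ as one endpoint and lying in the local basin of $z$; such a side exists because $z$ is (semi)attracting, and in the semi-attracting case I take the basin side (the other being a piece of $W^u(z)$). The forward iterates $f^n(\tilde J_0)$ remain in an arbitrarily small neighborhood of the orbit of $z$ and have uniformly bounded length, so Lemma~\ref{bounded-arcs}, applied with $\La=\La^u$, yields a normally hyperbolic periodic arc $I$ with $\tilde J_0\subset W^s(I)$. Closedness and forward invariance of $\La^u$ then give $I\subset\La^u$. Since $f^n(\tilde J_0)$ converges to $z$ (basin) while also accumulating on $I$, we deduce $z\in I$; in the semi-attracting case, $I$ cannot extend past $z$ into $W^u(z)$ against the expansion of $Df^k|_{W^u(z)}$ (as $I$ is $f^k$-invariant of finite length), so $z$ is forced to be an endpoint of $I$.

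The main obstacle, in my view, is the $C^1$-convergence $J_k\to\tilde J$: the Hausdorff limit could a priori be a fractal accumulation, and one must upgrade to a genuine $C^1$ arc tangent to $F$. This is handled by a standard cone-field argument in the dominated regime, using that tangent vectors to $f^n(\Ga^u)$ align exponentially fast with $F$ and have uniformly controlled Lipschitz dependence along $\La^u$, giving $C^1$-compactness of fixed-length pieces. A secondary subtlety, in the full sink case, is to check that $z$ is an endpoint rather than an interior sink of the $1$D dynamics on $I$; this should follow by restricting to a side of $z$ from which iterates of $\Ga^u$ actually accumulate in $\La^u$, using that the opposite side of $W^{cu}_{\mathrm{loc}}(z)$ meets $\La^u$ only at $z$ itself.
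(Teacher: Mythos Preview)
Your argument has a genuine gap at the step where you invoke Lemma~\ref{bounded-arcs}. You choose $\tilde J_0$ to be the sub-arc of $\tilde J$ with endpoint $z$ lying in the local basin of $z$, so every forward iterate $f^n(\tilde J_0)$ shrinks to the orbit of $z$. Applying Lemma~\ref{bounded-arcs} (or Lemma~\ref{centerunstable2}) to such an arc lands you in the ``(semi)attracting periodic point'' alternative, not the ``normally hyperbolic periodic arc'' one: the only periodic arc $I$ with $\tilde J_0\subset W^s(I)$ is the degenerate one $I=\{z\}$. Your subsequent sentences (``$z\in I$'', ``$I$ cannot extend past $z$ into $W^u(z)$'') tacitly assume $I$ is non-trivial, which is precisely what has not been produced. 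The arc $\tilde J_0\subset\La^u$ you build is a genuine arc through $z$ inside $\La^u$, but it is not $f$-periodic, so it does not witness the claim.

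What is missing is that the normally hyperbolic arc one seeks necessarily extends \emph{outside} the local basin of $z$; it joins $z$ to another fixed point lying on the basin boundary. The paper obtains this by a completely different mechanism: it takes arcs in $\Ga^u(p)$ with one endpoint converging to $z$ and the other kept on the boundary of the local basin, passes to a limit continuum $\hat l$ from $z$ to a boundary point $y$, and then exploits \emph{mild dissipation} (stable manifolds separating the disc) to control the backward iterates $f^{-n}(y)$. The separating stable leaves $W^s_\DD(f^{-n}(y))$ are nested, accumulate on an invariant leaf carrying a fixed point $z'$, and $\tilde l=\bigcup_{n\geq 0} f^{-n}(\hat l)$ is the desired $f$-invariant normally hyperbolic arc from $z$ to $z'$. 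Mild dissipation is used essentially here and never enters your argument; without it there is no device to manufacture an invariant arc crossing the basin boundary.
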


\begin{proof} Let us assume that $z$ is fixed. Let $l$ be an arc contained in the unstable branch of $p$ (the fixed point from the hypothesis of theorem \ref{disk2}) with one endpoint is close to $x$ and the other is in the boundary of the local basin of attraction of $z$; that point exists otherwise $\La^u$ would be reduced to $z$. The forward iterates of $l$ is  a sequence of arcs $(l_n)$ that are in the unstable manifold of $p$ such that one of their extreme points is in the boundary of the local basin of $z$ and the other converges to $z$. Let $\hat l$ be a continuous arc that is limit of the arcs $(l_n)$ and observe that the interior of $\hat l$ is contained in the local basin of $z$, one  of the boundary points of $\hat l$ is $z$ and the other, named $y$, is in the boundary of the local basin of $z$.  Since $y\in \La^u$ let us consider the local stable manifold of $y$; let  $y'$ be a point sufficiently close to $y$ in $l$,   and observe that the local stable manifold of the  forward iterates of $y'$ converge to  $z$ and since its stable manifold separated the disk, it follows that the stable manifold of an iterate of $y'$ also separates the disk and so also the stable manifold of $y'$; given that $y'$ can be taken arbitrary close to $y$ it follows that also the stable of $y$ separates the disk. Let  $\ga_n$ the connected  part of the stable manifold of $f^{-n}(y)$ intersected with the disk  that contains $y$ and observe that $\ga_n$ separates the disk; observe that $\ga_1$ is in the connected component of $\DD\setminus\{\ga_0\}$ that does not contain $z$ and similarly $\ga_{n+1}$ is in the connected component of $\DD\setminus\{\ga_n\}$ that does not contain  $\ga_{n-1}$. In that way, we get a sequence $(\ga_n)$ of stable leaves ordered by the  inclusion  of the connected components and so there is a limit  $\hat \ga$ of the sequence of stable arcs $(\ga_n)$; since $f(\ga_n)\subset \ga_{n-1}$ it follows that $\hat \ga$ is a continuous invariant arc and so it has a fixed point $z'$ that is also the limit of $f^{-n}(y)$. Observe that the connected component $\hat \DD$ of $\DD$ bounded by $W^s_\DD(z)$ and $\hat \ga$ is a forward invariant disk foliated by local stable manifolds of points in   $\tilde l= \cup_{n>0}f^{-n}(\hat l).$  Since $\hat l$ is forward invariant, then $\tilde  l$  is also forward invariant so it is a normally hyperbolic arc.

\end{proof}

\begin{lemma}\label{pre-cu-omega} For any $x\in \La^u$ there is $\e(x)>0$ such that one of the branches of $W^{cu}_{\e(x)}(x)$ is contained in $\La^u.$
    
\end{lemma}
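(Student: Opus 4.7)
The plan is to find, for each $x\in\La^u$, a sequence of long arcs of $\Ga^u$ passing near $x$ and extract a $C^1$ limit, which will yield one branch of the local center-unstable plaque at $x$ inside $\La^u$. Since $\La^u$ has a dominated splitting $E^s\oplus F$, for some uniform $\e_0>0$ every $z\in\La^u$ admits a local center-unstable plaque $W^{cu}_{\e_0}(z)$ tangent to $F$, varying continuously in $z$. Moreover, because $\ga^u\subset W^u(p)$ is tangent at $p$ to $F(p)$, domination forces the tangent direction of $\Ga^u$ at any $z\in f^n(\ga^u)$ to lie in a cone around $F(z)$ whose width tends to $0$ as $n\to+\infty$.

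Fix $x\in\La^u$, pick $z_k=f^{n_k}(\xi_k)\to x$ with $\xi_k$ in a compact subarc strictly interior to $\ga^u$ and $n_k\to+\infty$, and fix a small $\de_0<\e_0$. Let $I_k\subset\Ga^u$ be the subarc obtained by extending $\Ga^u$ around $z_k$ (both forward and backward along consecutive iterates of $\ga^u$) up to total length $\de_0$. I split into two cases. In Case A, a subsequence has one side of $z_k$ inside $I_k$ of length at least $\de_0/3$; then the uniform tangent-cone control plus Arzel\`a--Ascoli yields a $C^1$ limit arc $I_\infty$ through $x$, of length $\geq\de_0/3$ and tangent to $F(x)$; since each $I_k\subset\Ga^u\subset\La^u$ for $n_k\geq 1$ and $\La^u$ is closed, $I_\infty\subset\La^u$, and uniqueness of the center-unstable plaque identifies $I_\infty$ with one branch of $W^{cu}_{\e(x)}(x)$ for some $\e(x)>0$.

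In Case B, both sides of $I_k$ shrink, so the lengths of consecutive iterates $f^{n_k+j}(\ga^u)$ accumulating on $x$ tend uniformly to $0$; this forces $\mathrm{length}(f^n(\ga^u))$ to be uniformly small for large $n$. Applying lemma \ref{bounded-arcs} to $\ga^u$ then yields that $\ga^u$ lies in the stable set of a normally hyperbolic periodic arc $I\subset\La^u$, and the Hausdorff convergence of $f^n(\ga^u)$ to $I$ forces $\La^u=I$. Therefore $x\in I$ and $W^{cu}_{\e(x)}(x)\subset I\subset\La^u$, on both sides if $x$ is interior to $I$, on one side if $x$ is an endpoint of $I$ (which is a periodic point, namely the ``boundary periodic point'' appearing in lemma \ref{whole-centerunstable}). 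The main obstacle is making Case B airtight: verifying that uniform shrinking of the $I_k$ really yields the hypothesis of lemma \ref{bounded-arcs} and that its conclusion reduces $\La^u$ to a single normally hyperbolic arc; the $C^1$ compactness in Case A is then routine once the length-bounded-below side has been secured by the interior choice of $\xi_k$.
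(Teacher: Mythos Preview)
Your approach is quite different from the paper's and has gaps. The paper never builds the branch as a limit of unstable arcs; instead it uses that $\La^u$ is a cellular continuum, so the connected component of $\La^u\cap B_\e(x)$ through $x$ is itself a nontrivial continuum, and argues that if this continuum is not contained in one center-unstable branch then it must spread in the $E^s$-direction, whence backward expansion of $E^s$ forces $\La^u$ to contain an entire stable leaf---contradicting mild dissipation (the (semi)attracting periodic case is handled first via Claim~\ref{attracting}). Your argument never uses mild dissipation, which should already be a warning sign. Moreover, your Case~B is incoherent as written: you construct $I_k\subset\Ga^u$ of total length $\de_0$, so one side of $z_k$ automatically has length at least $\de_0/2$ and Case~B cannot occur; the appeal to Lemma~\ref{bounded-arcs} is then irrelevant. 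If you intended $I_k$ to be truncated (at the boundary of a ball, or to a single iterate $f^{n_k}(\ga^u)$), you need to say so and explain why that truncation can be short on both sides.

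The substantive gap is the tangent control, which is the heart of your Case~A but is asserted rather than proved. You claim that $T_z\Ga^u$ lies in a cone about $F(z)$ of width tending to $0$ for $z\in f^n(\ga^u)$, invoking that $\ga^u$ is tangent to $F(p)$ at $p$; but $p$ need not lie in $\La^u$, so $F(p)$ may be undefined, and even on $f(\ga^u)\subset\La^u$ you have not excluded a point $y$ with $T_y\Ga^u=E^s(y)$. At the entire forward orbit of such a $y$ the tangent remains in $E^s$ and never approaches $F$, destroying the uniform cone condition you need for Arzel\`a--Ascoli. This can be ruled out (for instance: $E^s$ is uniformly contracted while $W^u(p)$ is backward-contracted to $p$, so backward iterates of a small tangency arc would have length growing without bound while lying in a bounded piece of $W^u(p)$), but that argument has to be made explicitly and requires tracking the backward orbit into the region near $p$ where the dominated splitting on $\La^u$ is no longer directly available.
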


\begin{proof}
If $x$ is a (semi)attracting periodic point, the lemma follows from claim \ref{attracting}. So, let us assume that  $x$ is not a (semi)attracting periodic point.
For any point $x\in \La^u$ and $\e$ small, let $\La^{u}_\e(x)$ be the connected component of $\La^u\cap B_\e(x)$ that contains $x$ and observe that  is a continuous set that contains $x$. If $\La^u_\e(x)$ is not in one of the branches of the  local center-unstable of $x$, by lemma \ref{centerunstable}, it holds that  backward iterates of $\La^u_\e(x)$ converge to the whole stable set of a point in $\La^u$, but that is not possible since by the assumption of mild dissipation, the stable manifold cannot be contained in the disk. 
\end{proof}

\begin{lemma}\label{cu-omega} Let $x\in \La^u$; for any $z\in \omega(x)$ it follows

\begin{itemize}
    \item[--] either  there exists $\e(z)>0$ such that $W^{cu}_{\e(z)}(z)\subset \La^u$, meaning that both (sufficiently small) branches of the center-unstable of $z$ are contained in $\La^u$;

    \item[--] or $z$ is a periodic point in $\La^u$ such that $x\in W^s(z)$ and at least one center-unstable branch of $z$ is contained in $\La^u.$
    




\end{itemize}

    Moreover, there are only a finite number of periodic points that satisfy that only one center-unstable branch is in $\La^u$.
\end{lemma}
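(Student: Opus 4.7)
The plan is to combine the local result of lemma \ref{pre-cu-omega} with the structural dichotomy for $\omega(x)$ given by proposition \ref{dicho-omega}, exploiting the uniform size and $C^1$-continuity of center unstable manifolds on $\La^u$ (provided by the dominated splitting $E^s\oplus F$) together with the closedness of $\La^u$. Fix $z\in \omega(x)$, and by lemma \ref{pre-cu-omega} choose a cu-branch $\Gamma^+_z$ of $z$ contained in $\La^u$; write $\Gamma^-_z$ for the opposite branch. Pick $n_k\to \infty$ with $f^{n_k}(x)\to z$ and for each $k$ apply lemma \ref{pre-cu-omega} to $f^{n_k}(x)\in \La^u$ to get a cu-branch $\gamma_k$ of $f^{n_k}(x)$ contained in $\La^u$. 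After passing to a subsequence, the $\gamma_k$ converge in $C^1$ to a cu-branch of $z$, which lies in $\La^u$ by closedness.

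The first alternative of the lemma follows if one can arrange subsequences whose $\gamma_k$-limits cover both cu-branches of $z$. Suppose not: then for all large $k$ the $\gamma_k$ limit only to $\Gamma^+_z$, and $\Gamma^-_z\not\subset \La^u$, so there is $q\in \Gamma^-_z$ and $\rho>0$ with $B(q,\rho)\cap \La^u=\emptyset$. I will then show $f^{n_k}(x)\in W^s_\e(z)$ for all large $k$, yielding $x\in W^s(z)$. If not, a subsequence of $f^{n_k}(x)$ sits off $W^s_\e(z)$; by $C^1$-continuity of the cu-lamination, the cu-leaf through $f^{n_k}(x)$, extended to its uniform size on $\La^u$, stays within $\rho/2$ of $W^{cu}_\e(z)$ and in particular comes within $\rho/2$ of $q$, so the cu-branch of $f^{n_k}(x)$ extending toward the left side cannot lie entirely in $\La^u$, forcing $\gamma_k$ to extend to the right; a symmetric argument, using claim \ref{cpt-lemma} and the mild dissipation hypothesis (as in the proof of lemma \ref{pre-cu-omega}), rules out $f^{n_k}(x)$ sitting on the ``wrong side'' of $W^s_\e(z)$.

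Once $x\in W^s(z)$, we have $\omega(x)=\omega(z)$ and in particular $z\in \omega(z)$. Applying proposition \ref{dicho-omega} to $\omega(z)$: the ``interior points'' case is excluded because interior points near $z$ would, via the bounding stable and unstable arcs in their definition together with $C^1$-continuity of the cu-lamination, force both cu-branches of $z$ into $\La^u$, contradicting $\Gamma^-_z\not\subset \La^u$; so $\omega(z)$ is a periodic orbit or a heterocycle, and by remark \ref{rmk:heterocycle} the only $\omega$-recurrent points in such a set are the periodic saddles, whence $z$ is periodic. For the moreover part, proposition \ref{PS2-1} yields $N_0,\de$ such that any two saddles of $\La^u$ of period larger than $N_0$ and mutual distance less than $\de$ are homoclinically related; a $\la$-lemma argument then propagates both unstable branches of each such saddle into $\La^u$ locally, so a boundary periodic point cannot belong to this situation. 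Hence boundary periodic points have period at most $N_0$ or are pairwise $\de$-separated, and compactness of $\La^u$ gives finiteness.

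The main obstacle will be the position argument in the second paragraph, rigorously deriving $f^{n_k}(x)\in W^s_\e(z)$ from the assumption that all $\gamma_k$ limit to $\Gamma^+_z$; this requires quantitative comparison between the uniform size of local cu-manifolds on $\La^u$, the gap radius $\rho$, and the distance $d(f^{n_k}(x),z)$, and it is where the argument must use mildly dissipative hypothesis via claim \ref{cpt-lemma} to symmetrize the two sides of $W^s_\e(z)$.
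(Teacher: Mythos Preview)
The central gap is your second paragraph. You want to deduce $f^{n_k}(x)\in W^s_\e(z)$ from the assumption that every $\gamma_k$ limits to $\Gamma^+_z$ and $\Gamma^-_z\not\subset\La^u$, but the sketch does not achieve this: the fact that the full cu-leaf through $f^{n_k}(x)$ passes near $q\in\Gamma^-_z\setminus\La^u$ only shows that the $\Gamma^-$-side cu-branch of $f^{n_k}(x)$ is not in $\La^u$, hence that $\gamma_k$ is the $\Gamma^+$-side branch; this is entirely compatible with $f^{n_k}(x)$ lying off $W^s_\e(z)$, and there is no ``symmetric argument'' available---the two components of $B_\e(z)\setminus W^s_\e(z)$ are indistinguishable from the cu-lamination viewpoint, and claim~\ref{cpt-lemma} says nothing about position relative to a stable leaf. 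A secondary issue is that lemma~\ref{pre-cu-omega} only yields $\gamma_k$ of size $\e(f^{n_k}(x))$, with no uniformity, so the $C^1$-limit of the $\gamma_k$ may degenerate to $\{z\}$ rather than a cu-branch; you are conflating the uniform size of cu-\emph{leaves} (given by domination) with the size of their intersection with $\La^u$.

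The paper's route is different and sidesteps both problems. It fixes once and for all a cu-arc $J\subset\La^u$ at $x$ and iterates $J$ forward, invoking the length dichotomy of lemma~\ref{centerunstable2}: if $\operatorname{length}(f^n(J))\to 0$ then $x$ lies in the basin of a (semi)attracting periodic point $z$, and claim~\ref{attracting} produces a cu-branch of $z$ in $\La^u$; otherwise some subsequence $f^{n_k}(J)$ has length bounded below, and its Hausdorff limit already furnishes a definite arc $I\subset\La^u$ inside one cu-branch of $z$. The other branch is then obtained through a position analysis of further returns relative to $W^s_\de(I)$ versus $W^s_\de(I')$, constructing a secondary arc $\hat J$ and applying lemma~\ref{centerunstable2} again. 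Your appeal to proposition~\ref{dicho-omega} only becomes relevant after $x\in W^s(z)$ is established, which is exactly the missing step; and even then, your exclusion of the ``interior points'' case is unconvincing, since interior points of $\omega(z)$ need not lie near $z$.
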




    

\begin{proof}

By  lemma \ref{pre-cu-omega}, given $x$ there is one   branch $J$ of $W^{cu}_\e(x)$  contained in $\La^u$. Either $length(f^n(J))\to 0$ or there exists a sequences of integer $(n_k)$ such that $length(f^{n_k}(J))$ is uniformly bounded by below. 

In the first case, by lemma \ref{centerunstable2} it follows that $(f^n(J))$ converges to a (semi)attracting periodic point $z$. If the point is attracting, then $f^n(J)$ is in the interior of the basin of attraction of $z$; if $z$ is semi-attracting, then  the interior of  $f^n(J)$ is in the interior of the local  basin of attraction of $z$ and $x \in W^s(z)$ (meaning that the iterates of $x$ in in the boundary of the basin of attraction of $z$). In both cases, one can apply claim \ref{attracting} and so one center-unstable branch of $z$ is contained in $\La^u.$



In the second case, one can assume also that there is $z$  such that $f^{n_k}(x)\to z.$ Since $length(f^{n_k}(J))$ are uniformly bounded by below it follows that here is an arc $I$ containing $z$ and contained  in one center-unstable branch of $z$ that  is also  contained in the limit of $f^{n_k}(J)$ and so $I\subset \La^u.$ 

If there are two integers $n_k< n_{k'}$ such that  $f^{n_{k'}}(x)\in W^s_\de(f^{n_k}(x))$ it follows the there is a periodic point $z'$ in the local stable manifold of $f^{n_k}(x)$ and so $z=z'$  and in that case    $x$ is in the stable manifold of it and by lemma \ref{pre-cu-omega} one of its center-unstable branches is in $\La^u$. In the other case, let $I'$ be an arc containing $z$ in the other center-unstable branch of $z$ disjoints with $I.$ Either 

\begin{itemize}
    \item[--] i) there is an  iterate  $f^{n_k}(x)$ close to $z$  in the interior of $W^s_\de(I)$,

    \item[--] ii) or there are infinitely many iterates  $f^{n_k}(x)$ accumulating on $z$ in the interior of $W^s_\de(I').$
\end{itemize}

  In the first case, let $\hat J$ be an arc in the center-unstable of $f^{n_{k_0}}(x)$ that one extreme point is given by that point and the other by the intersection of the local stable of $z$ and the center-unstable of $f^{n_{k_0}}(x).$ It follows immediately that $\hat J$ is disjoint with $f^{n_{k_0}}(J)$ and observe that since $\hat J$ is in the local basin of $I$ then its forward iterates remain in a neighborhood of $\La^u$ and so  we can apply again lemma \ref{centerunstable2}; if the length of the iterates of $\hat J$ go to infinity, then the accumulation of the  forward iterates of $f^{n_k}(\hat J)$ are in $\La^u$ and  contain the other center-unstable branch $I'$ of $z$ disjoint with $I$ and concluding that the whole center-unstable of $z$ is in $\La^u.$ If the lengths of the iterates of of $\hat J$ does not go to infinite,  $x$ is in the basin of a normally hyperbolic arc and $z$ is a periodic point such that one of its center-unstable branch is contained in $\La^u.$  
  
  In the second case, i.e. that $f^{n_k}(x)$ is in the local basin of $\hat I$, then we can take $n_{k'}$ such that $f^{n_{k'}}(x)$ is closer to $z$ and again we can consider an arc $\hat J$ is the center-unstable branch of $f^{n_{k'}}(x)$ that one extreme point is given by that point and the other by the intersection of the local stable of $f^{n_k}(x)$ and the center-unstable of $f^{n_{k'}}(x).$ It follows immediately that $\hat J$ is disjoint with $f^{n_{k'}}(J)$ and observe that since $\hat J$ is in the local stable manifold of  $f^{n_k}(J)$, that is contained in $\La^u,$ then its forward iterates remain in a neighborhood of $\La^u$ and so  we can apply again lemma \ref{centerunstable2} and argue as before.

To prove the last claim of the lemma (about the finiteness of periodic points satisfying that  only one center-unstable branch is contained in $\La^u$),  first we prove that their periods are uniformly bounded. That is immediate for  periodic points that are either   (semi)attracting or contained in a normally hyperbolic arc. For the remaining periodic points, by proposition \ref{PS2-1}   any  pair of periodic points with large period and close enough are homoclinically related, but this implies that given three periodic points sufficiently close, it follows that the local stable manifold of  one  of them, named $q$, intersects the unstable branch of another that is contained in $\La^u$ and so by the classical inclination lemma it follows that the unstable of $q$ is contained in $\La^u$, a contradiction.  
So, if there are infinitely many periodic points, given that their periods are uniformly bounded, one can regroup them into a finite number of normally periodic arcs that are contained in $\La^u.$
\end{proof}


The next remark is a simple observation  that follows immediately from the dynamics of fixed points in the dissipative context. 

\begin{remark}\label{att-NH} Let $z$ be a periodic point  and let us choose one of the center-unstable branches. Either that branch is

\begin{itemize}
    \item[--]in the basin of attraction of $z$ or,

    \item[--] a normally hyperbolic arc or, 

    \item[--] contained in the unstable manifold of $z.$
\end{itemize} 
Moreover, given a sequence of points accumulating on $z$ and inside the connected component of $B_\e(z)\setminus W^s_\e(z)$ that intersects the chosen center-unstable branch, then either

\begin{itemize}
    \item[--]the forward iterates of that sequence converge to $z$  or to points in a  normally hyperbolic arc,

    \item[--] or there is a sequence of forward iterates that accumulates in a fundamental domain of the unstable manifold of $z.$
\end{itemize}

\end{remark}


\begin{proof}[Proof of lemma \ref{whole-centerunstable}]

Let $y\in \alpha(x)$ and let $z\in \omega(y)$; on one hand it follows immediately that $z\in \alpha(x)$, on the other, we can apply lemma \ref{cu-omega} to the point $z$. 

Let us consider first the case that  both center  unstable of $z$  are in $\La^u$.   Let $(n_k)$ be a sequence of positive integers  such that $f^{-n_k}(x)\to z$. It could hold that  
\begin{itemize}
\item[--] either there are iterates $(f^{-n_{k}}(x))$ accumulating on $z$  that are contained in the  local center-unstable of $z,$

\item[--] or there are two iterates $f^{-n_k}(x), f^{-n_{k'}}(x)$ that belong to the local stable of $z,$

\item[--] or there are infinitely many iterates $f^{-n_k}(x)$ that are in the interior of a  quadrant $Q$ determined  by the local stable and center-unstable of  $z.$

\end{itemize}

In the first case since both branches of $z$ are in $\La^u$ it holds by continuity of the center-unstable manifold along the orbit the same holds for $f^{-n_k}(x)$ and so for $x$. In the second case, $f^{n_{k'}-n_k}(x)$ is in the local stable manifold of $x$ and so $x$ is a periodic point and therefore $x=z$ and again both center-unstable  branches of $x$ are in $\La^u.$ In the third case, let  $J_k^{+}$ be one arc in the center-unstable of $f^{-n_k}(x)$ and $J_k^{-}$  in the other, and let us treat them separately. For each $J^+_k$ (the arcs contained in the quadrant $Q$), let $\hat J^+_k$ be the arc in the center-unstable given by the intersection of the local stable manifold of $J^+_k$ with the center-unstable of $z$ and observe that this arc in $\La^u$. If it holds that there is a subsequence of $(f^{n_{k'}}(J^+_{k'}))$ with length uniformly bounded by below, since $f^{n_{k'}}(\hat J^+)\in W^s_{\la^{n_{k'}}.\de}(f^{n_{k'}}(J_{k'}^+))$ it follows also that the length of the subsequence $(f^{n_{k'}}(\hat J^+_{k'}))$ is also bounded by below, therefore  there is an arc $\hat J$ containing $x$ and contained  in one of the branches of the center-unstable of $x$ (observe that $f^{n_{k'}}(J^+_{k'})$ contains $x$) that is in $\La^u.$ If the lengths of $f^{n_k}(J^+_k)$ goes to zero, for each $k$ one takes $k'$ larger than $k$ such that $f^{-n_{k'}}(x)$ is closer to $z$ than $f^{-n_k}(x)$ and since are accumulating in the same quadrant let $x_{k'}$ in the intersection of the local stable of $f^{n_k}(x)$ and $J^+_{n_{k'}};$ from the fact that $f^{-n_k}(x)$  is in the local stable manifold of $x_{k'}$ and the length of $f^{n_{k'}}(J_{k'})$ goes to zero follows that that $f^{n_{k'}}(x_{k'})$ is arbitrary close to $x$ and $f^{n_{k'}-n_k}(x)$ provided that $k$ and $k'$ are large enough; therefore $x\in \omega(x)$ and so by lemma \ref{cu-omega} it follows that both center-unstable branches of $x$ is in $\La^u$ or it is a periodic point with at least  one center-unstable branch in $\La^u$. A similar argument can be made for the other branch $J^-_k$ with a caveat; if the length of the iterates of $f^{n_k}(J^-_k)$ go to zero, one considers  the arcs  $\hat J^-_k$  in the center-unstable of $z$ given by the intersection of the local stable manifold of $J^-_k$ with the center-unstable of $z$ and observe that these arcs are in $\La^u$ and each of them contain one local center-unstable branch of $z$, named $J_z$; therefore, if the length of the iterates of $f^{n_k}(J^-_k)$ go to zero, also the length of $f^{n_k}(J_z)$ and by lemma \ref{centerunstable2} follows that $z$ is in the basin of attraction of a (semi)attracting periodic point or it is in the basin of a normally hyperbolic arc and so converging to a periodic point; since $z$ is  in $\alpha(x)$ it follows that $x$ is  a periodic point.






Let us consider the second option of lemma  \ref{cu-omega}, i.e., $z$ is a periodic point. If $z$ is an attracting periodic point, and $z\in \alpha(x)$ then $x=z$ and so $x$ is an attracting periodic point in $\La^u$ and by claim \ref{attracting} it follows also  that $x$ belongs to a normally hyperbolic arc in $\La^u$. If $z$ is a semi-attracting periodic point then  the pre-iterates of $x$ that accumulate on $z$ are either in the basin of attraction of $z$ or they belong to the connected component of $B_\e(z)\setminus W^s_\e(z)$ that is not in the basin of attraction of $z.$ In the first case, it follows that $x=z$ and again by claim \ref{attracting} it follows also  that $x$ belongs to a normally hyperbolic arc in $\La^u$. In the second case, by remark \ref{att-NH} it follows that either $x=z$, or  it belongs to an invariant normally hyperbolic arc or there exists $z'\in W^u_{\e}(z)$ accumulated by pre-iterates of $x$ and so $z'$ is in $\La^u$ and by claim \ref{cpt-lemma} it follows that the unstable branch of $W^u(z)$ that contains $z'$ is in $\La^u$ and so there are pre-iterates of $x$ that accumulates at a point such that both center-unstable branches are in $\La^u;$ therefore we can argue as done before.


\end{proof}

\begin{proof}[Proof of theorem \ref{disk2}]

Given a point $x\in \La^u$, let $\hat W^{cu}_{\e(x)}(x)$ be the piece of the center-unstable manifold of $x$ that is contained in $\La^u.$ From lemma \ref{whole-centerunstable} it follows that the whole center-unstable $W^{cu}_{\e(x)}(x)$ is contained in $\La^u$ or $x$ is a boundary periodic. Now, we consider the box $B(x)$ defined as follows  $$B(x):=\cup_{y\in W^{cu}_{\eps(x)}(x)}W^s_\de(y).$$

It follows that either $x$ is in the interior of $B(x)$, or $x$ is a boundary
periodic point lying in the boundary of $B(x)$. In the latter case we modify
the diffeomorphism as follows. Recall (lemma \ref{whole-centerunstable})
that there are only finitely many boundary periodic points; taking an
iterate of $f$, we assume that they are fixed. For each boundary fixed point
$x$, let $C_x$ be the connected component of $B_\e(x)\setminus W^s_\e(x)$
that does not intersect the center-unstable branch of $x$ contained in
$\La^u$; in particular $C_x\cap \La^u=\emptyset$, and the components $C_x$
can be chosen pairwise disjoint. We choose a $C^2$ dissipative
diffeomorphism $g$ of the disk such that:
\begin{itemize}
\item[--] $g=f$ on the complement of $\bigcup_x C_x$;
\item[--] for each boundary fixed point $x$, the center-unstable branch of
$x$ contained in $C_x$ lies, for $g$, in the basin of attraction of $x$;
that is, $x$ is a (semi-)attracting fixed point of $g$.
\end{itemize}

The following three observations justify replacing $f$ by $g$ in the rest of
the argument.

First, the perturbation does not affect $\La^u$, neither topologically nor
at the level of the derivative: $g=f$ on $\La^u$, so $\La^u$ is
$g$-invariant; and since $g$ and $f$ coincide on the closure of the
complement of $\bigcup_x C_x$, which accumulates on every point of $\La^u$,
the continuity of the derivatives gives $D_zg=D_zf$ for every $z\in\La^u$.
Hence the derivative cocycles of $f$ and $g$ along $\La^u$ coincide; in
particular $\La^u$ carries the same dominated splitting for both maps. Moreover, since
$\Ga^u(p)\cup\{p\}\subset \La^u$ is disjoint from $\bigcup_x C_x$, the
unstable branch of $p$ is the same for $f$ and for $g$, and $\La^u$ is the
accumulation set of that branch for both maps.

Second, $g$ is again mildly dissipative: an ergodic $g$-invariant measure
either gives no weight to $\bigcup_x C_x$, in which case it is
$f$-invariant with the same stable branches at almost every point, or
charges some $C_x$; in the latter case, since every point of $C_x$ is
attracted under $g$ to the fixed point $x$, the measure is the Dirac measure
at $x$, and the stable branches of $x$ for $g$ contain its stable branches
for $f$, which reach the boundary of the disk by the mild dissipation of
$f$.

Third, by construction each boundary fixed point $x$ is a (semi-)attracting
fixed point of $g$, so that the extended box $B(x)$ obtained by adding
to $B(x)$ the component $C_x$  which is attracted to $x$, contains $x$ in
its interior.


After this modification (we keep calling $g$ as $f$), we can cover $\La^u$ by $$B=\cup_{z\in \La^u} \overset{\circ}{B}(z).$$

Observe that $B$ satisfies:

\begin{itemize}
     \item[--] it is open and connected, 
\item[--] $\overline{f(B)}\subset B,$
 \item[--] it is simply connected.
\end{itemize}

The  first item follows from the fact that $B$  is an open union of neighborhood  of all the points of a connected set. To prove the second point, observe that 
 given $y\in B$ there is $x\in \La^u$ such that $y\in B(x)$, so  there is $x'$ in the interior of $ W^{cu}_{\e(x)}(x)\cap \La^u$ such that $y\in W^s_{\e}(x')$ and so $f(y)\in W^s_{\la.\e}(f(x'))\subset W^s_\e(f(x'))$ and so $f(y)\in B(f(x'))\subset B.$ If $B$ is not simply connected, there are connected components of $\DD\setminus B$ that do not intersect the boundary of $\DD$; let $U$ be the union of these components and since $\overline{f(B)}\subset B$ it follows that $f(U)$ contains $U$, contradicting  the fact that $f_{|\DD}$ is dissipative.
 
 

Observe that $\cap_{n>0} f^n(B)=\La^u$ and so the limit set of $f_{|B}$ has dominated splitting. Now we can apply the remark \ref{decomposition}. 
If there is a non-trivial homoclinic class in the limit set of $f$ inside $B$,  then there is a loop $\ga$ formed by an unstable arc $\ga^u$ and a stable arc $\ga^s$ contained in $B$. On the other hand, observe that the stable foliation in $B$ is transversal to $\ga^u$ and $\ga^s$ is a stable leaf of that foliation; however, there is a point in $\ga^u$ tangent to the stable foliation; a contradiction: a closed curve transversal to a foliation on a disk would imply the loop bounds a disk filled by leaves (by Poincar\'e–Bendixson), which would force a leaf to terminate inside the disk; contradiction with the leaves being stable manifolds of points with no terminating dynamics. So, the limit set is just formed by a  finite number of periodic points and finite normally hyperbolic  periodic arcs. Taking an iterate of $f$, one can assume that the periodic points and arcs are fixed. Let us first conclude that there are not normally hyperbolic arcs. If the $\Ga^u(p)$ is not a normally hyperbolic arc (otherwise the theorem is proved) but  accumulates on a non-trivial normally hyperbolic invariant arc $J$, $\Ga^u(p)$ has to intersect the stable local manifold of of one of the  endpoints of $J$, that is a fixed point $q$; this implies that there is a closed arc $\ga=\ga^u\cup \ga^s$ formed by an arc  $\ga^u\subset \Ga^u(p) $ and an arc $\ga^s\subset W^s_\DD(q)$ that is contained in $B,$ and arguing as before one gets a tangency between $\ga^u$ and the stable foliation in $B$ and therefore a critical point in $\Ga^u(p)$. Now we  conclude that there is only one fixed point $q$ in $\La^u$ and it is a (semi-)attracting fixed point: if in $\La^u$ there is a saddle fixed point $q$ such that one of its unstable branches is in $\La^u$, then $\Ga^u(p)$ intersects the local stable of $q$ (if it accumulates without intersecting one gets a point in the local stable manifold that it is in the limit set of $\La^u$ but is not a fixed point); if $\Ga^u(p)$ intersects $W^s_\DD(q)$ without crossing, one gets a tangency between $\Ga^u(p)$ and $W^s_\DD(q)$ and so a critical point in $\Ga^u(p);$  if $\Ga^u(p)$ crosses  $W^s_\DD(q)$, arguing as before we get a closed arc in $B$ and again a tangency between $\ga^u$ and the stable foliation in $B$. So, there are only (semi)attracting fixed points. Let $z$ be one of these (semi)attracting fixed points and let $x$ be in $\La^u$ and in the basin of attraction of $z;$ by claim \ref{attracting}, there is a normally hyperbolic arc $\Ga$ in $\La^u$ that one of its extreme points is  $z$  and arguing as before, $\Ga\subset \Ga^u(p)$ and if it does not coincide with $\Ga^u(p)$ then one gets a  critical point in $\Ga^u(p).$

Finally, the conclusions obtained for the modification of $f$ transfer to $f$. The critical
points produced by the tangency arguments above lie in $\La^u$ (they are
obtained either on $\Ga^u(p)$ or on its accumulation set), and along
$\La^u$ the derivative cocycles of $f$ and its modification coincide, so they are
critical points of $f$ as well. Likewise, a normally hyperbolic invariant
arc contained in $\La^u$ for the modification of $f$  is normally hyperbolic for $f$, since
normal hyperbolicity only involves the restriction of the derivative cocycle
to the arc.

\end{proof}

\section{Misiurewicz diffeomorphisms.  Proof of theorem \ref{thm:finite classes}}\label{sec:misiu}



Observe that by theorem \ref{disk2}, any periodic point in a non-trivial homoclinic class has an unstable branch that accumulates on the critical set.  Therefore, 
for the proof of theorem \ref{thm:finite classes}, we first address saddle periodic points for which at least one unstable branch accumulates on the critical set, and show that they can be grouped into a finite number of homoclinic classes; later it is shown that the remaining periodic points   are contained in the interior of   normally hyperbolic arcs. Throughout this section, for a saddle periodic point $q$ and one of its
unstable branches $\Ga^u(q)$, we write $\La^u(q)$ for the accumulation set
of that branch, in the sense of section \ref{sec:disk}.

Before going  into the proof, let us clarify a few definitions. Recall that for mildly dissipative diffeomorphisms, any branch of a stable manifold of a point is not contained in the disk $\DD$; in particular, one can define $W^s_\DD(x)$ as the connected component of $W^s(x)\cap \DD$ that contains $x$; similarly, given a stable branch $\Ga^s(x)$ one can define $\Ga^s_\DD(x).$ Observe that by the mildly dissipative hypothesis, $\DD\setminus W^s_\DD(x)$ has two connected components $\DD^+_x$ and $\DD^-_x$. 

\begin{definition}\label{defi:crossing} Let $p$ and $q$ be two saddle periodic points and let $x\in W^s(p)\cap W^u(q)$; we say that $W^u(q)$ {\it crosses} $W^s(p)$ at $x$,  if there are two connected arcs $\ga^+$ and $\ga^-$ in the unstable manifold of $q$ and $x$ is an extreme point of both arcs satisfying that 

\begin{itemize}
    \item[--] $\ga^+$ intersects $\DD^+_x$ and is contained in the closure of  $\DD^+_x$ and 
    \item[--] $\ga^-$ intersects $\DD^-_x$ and is contained in the closure of  $\DD^-_x.$
    
\end{itemize}

The point $x$  is also called a crossing intersection of $W^s(p)$ and $W^u(q).$

It is said that the {\it unstable manifold of $q$ crosses the stable of $p$} if there exists $x\in W^s(p)$ such that $W^u(q)$ intersects both components $\DD^+_x$ and $\DD^-_x.$
    
\end{definition}

Observe that the unstable of a periodic point crosses the stable of other periodic point if and only if  there is a crossing intersection.

Classically, a homoclinic class of a periodic point is defined as the closure of all the transversal intersections between the stable and unstable manifold of that periodic point. We extend that definition as follows:

\begin{definition}\label{defi:generalizedHC}
Given a periodic point $p$, the {\it generalized homoclinic class} is the closure of all the crossing points of the stable and unstable of $p.$

Given two periodic points $p$ and $q$, it is said that they are generalized homoclinically related if the unstable manifold of $q$ crosses the stable of $p$ and vice versa. 
    
\end{definition}

If the  diffeomorphism is $C^\infty$, it follows that if two periodic points are generalized homoclinically related then they are also homoclinically related in the sense that there is an intersection between the stable and unstable manifold that is transversal (see \cite{BCS}).

For brevity, we keep saying  homoclinic class when we are actually referring to generalized homoclinic class.

Coming back to the proof of theorem \ref{thm:finite classes}, we prove the following next proposition and lemma.

\begin{proposition}\label{critical-Misiu} 
Under the hypothesis of theorem \ref{thm:finite classes} it holds that all saddle periodic points that are close enough to a same critical point and for which at least one unstable branch accumulates on the critical set are homoclinically related.     
\end{proposition}

For the other  saddle periodic points, one can prove the following next lemma.

\begin{lemma}\label{critical-Misiu2} Under the hypothesis of theorem \ref{thm:finite classes}, any saddle periodic point such that none of its unstable branches accumulates on a critical point belongs to the interior of a normally hyperbolic periodic arc.
    
\end{lemma}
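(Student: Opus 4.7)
The approach is to combine Theorem \ref{disk2}, applied to each of the two unstable branches of $p$, with the center-unstable manifold theory recalled in Section \ref{sec:preliminaries}. Since neither branch $\Ga^u_\pm(p)$ accumulates on a critical point, Theorem \ref{disk2} forces each accumulation set $\La^u_\pm$ to be either a (semi-)attracting periodic point (whose basin contains the branch) or a normally hyperbolic periodic arc. Set $K := \{p\} \cup \overline{W^u(p)}$: this is a compact invariant set containing no critical points and at most finitely many (semi-)attracting periodic points.

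I would first show that $K$ admits a dominated splitting $E^s \oplus F$ with $E^s$ uniformly contracted. Removing from $K$ the basins of the finitely many semi-attracting periodic points leaves a set on which, by Theorem \ref{dissipative-FFH}, $f$ is far from homotheties; Theorem \ref{p.existence} then provides a dominated splitting, which extends continuously to the attracting points, and mild dissipation forces $E^s$ to be uniformly contracted throughout $K$. Applying Lemma \ref{centerunstable} to the periodic saddle $p$, the local center-unstable branch $\Ga^{cu}_{\e(p)}(p)$ is either normally hyperbolic with a sequence of periodic points accumulating on $p$ --- in which case $p$ is immediately in the interior of the desired periodic arc --- or coincides with the local unstable manifold $W^u_\e(p)$, which is transverse to $E^s$ and therefore locally normally hyperbolic.

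In the second alternative I would use Lemma \ref{centerunstable2} to iterate a short arc transverse to $E^s$ through $p$: since its forward iterates remain in a neighborhood of $K$ and their lengths are controlled via Lemma \ref{bounded-arcs}, either they converge to a normally hyperbolic periodic arc containing some forward iterate $f^n(p)$, or they shrink into the basin of a semi-attractor. Pulling back by $f^n$ and, in the latter situation, invoking Claim \ref{attracting} from the proof of Theorem \ref{disk2} to produce a normally hyperbolic arc joining $p$ to that semi-attractor along the branch, one obtains in either situation a normally hyperbolic periodic arc containing $p$ in its interior.

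The main obstacle I anticipate is the mixed case where one $\La^u_\pm$ is a semi-attracting periodic point and the other is a normally hyperbolic arc. In this situation one must carefully verify that the two normally hyperbolic arcs produced on each side of $p$ --- one by Claim \ref{attracting} running along the branch into the attracting basin, the other as the accumulation set of the opposite branch --- glue together at $p$ with the same tangent direction $F_p$ prescribed by the dominated splitting, producing a single normally hyperbolic periodic arc containing $p$ in its interior rather than merely a one-sided piece.
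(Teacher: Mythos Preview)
Your outline is correct in its first half --- forming $K=\{p\}\cup\overline{W^u(p)}$, noting it is compact, invariant, connected, and free of critical points, and deducing a dominated splitting $E^s\oplus F$ with $E^s$ contracted --- but then it takes an unnecessary detour. The paper's argument stops essentially here: once $K$ carries such a splitting and is a one-dimensional periodic continuum (being the union of $p$, the two unstable branches, and their accumulation sets, each of which by Theorem~\ref{disk2} is a point or an arc), $K$ \emph{is} the desired normally hyperbolic periodic arc, and $p$ sits in its interior because it has a piece of $K$ on each side.

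Your invocation of Lemmas~\ref{centerunstable}, \ref{centerunstable2}, \ref{bounded-arcs} and Claim~\ref{attracting} to iterate arcs and assemble the normally hyperbolic arc piece by piece is not needed, and the arc-iteration step is in fact awkward: a short arc through the periodic point $p$ transverse to $E^s$ is (locally) the unstable manifold itself, so its $f^{\mathrm{per}(p)}$-iterates expand rather than converge. More importantly, your ``main obstacle'' --- gluing two one-sided pieces at $p$ --- is an artifact of the piecewise construction. In the global approach the splitting $E^s\oplus F$ is obtained on all of $K$ at once, so there is nothing to glue: $F_p$ is simultaneously the tangent to both branches.

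One point worth making explicit in either approach: a (semi-)attracting periodic point $q\in\La^u_\pm$ cannot have complex or repeated eigenvalues, since by Proposition~\ref{properties} it would then be critical, contradicting the hypothesis; this is what makes the extension of the splitting to $q$ legitimate.
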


\begin{proof}


The proof follows immediately from theorem \ref{disk2}. In fact,  if unstable set of the periodic point is trivial, then it  belongs to a normally hyperbolic periodic arc; its unstable branches accumulate either on a semi-attracting periodic point or on a normally hyperbolic periodic arc. Adding these points or arcs to the unstable branches yields a closed periodic curve without critical points, which is therefore a normally hyperbolic arc.

\end{proof}

Before proving proposition \ref{critical-Misiu}, we show how the previous proposition and lemma combined with proposition \ref{PS2-1} provide the proof of theorem \ref{thm:finite classes}.

\paragraph{\it Proof of theorem \ref{thm:finite classes}:} After lemma \ref{critical-Misiu2} it is enough to consider saddle periodic points with branches accumulating on the critical set. By proposition \ref{critical-Misiu} for each critical point $c$ there is $\e(c)>0$ such that  the periodic points  whose orbits intersect $B_{\e(c)}(c)$ are homoclinically related and so they belong to a homoclinic class. There is a finite number of critical points $\{c_1,\dots, c_n\}$ such that  $B=\cup_{i=1}^n B_{\e(c_i)}(c_i)$ contains the critical set and all periodic points with their orbits intersecting $B$ are therefore  contained in a finite number of homoclinic classes. The closure of the orbits of the saddle  periodic points that do not intersect $B$ has dominated splitting and therefore   from proposition \ref{PS2-1} it is also concluded that they also belong to a finite number of homoclinic classes. 

\qed

\subsection{Proof of proposition \ref{critical-Misiu}.}


We split the proof of Proposition \ref{critical-Misiu} into two parts according to whether  a sequence of saddle periodic points accumulates on critical points whose $\omega$ or 
$\alpha$ limits are non-trivial (interior points or heterocycles), or only on boundary critical points (see definition \ref{defi:boundary}). In the first case, treated below, Proposition \ref{critical-Misiu} follows directly from Lemmas \ref{interior} and \ref{HC}. The second case reduces, via the notion of a  $*-$sequence (definition \ref{def:*-seq}), to proposition \ref{prop:*-seq}, whose proof occupies the rest of the section.


For a  Misiurewicz diffeomorphism, it follows that the  omega-limit of any critical point is a set with dominated splitting and so by proposition \ref{dicho-omega} it is either a periodic orbit, or a heterocycle or a set that contains interior points.

\begin{definition}\label{defi:boundary} 
    
Let ${\rm Crit_I}(f)$ be the set of critical points $c$ such that  $\omega(c)$ or $\alpha(c)$  either contains interior points, is a heterocycle, is contained in a normally hyperbolic arc, or lies in the interior of the basin of attraction of a (semi-)attracting periodic point. Let us call ${\rm Crit_{II}}(f)$ its complement. 

The critical points in ${\rm Crit_{II}}(f)$ are called {\it boundary critical points}  and the periodic points whose stable or unstable manifolds contain a boundary critical point are called {\it boundary saddle periodic points}. 
\end{definition}

Now, let us consider a sequences $(p_n)$ of saddles that accumulates on a critical point. Observe that if there is a sequence $(p_n)$ of periodic points with unbounded period that accumulates at a periodic point, then that periodic point has to be a saddle periodic point that can not be contained in the interior of a normally hyperbolic arc. Moreover, if a sequence of periodic points $(p_n)$ accumulates at a critical point, then their periods must be unbounded.

Given a sequence $(p_n)$ of saddle periodic points, let us define ${\rm Crit}(f, (p_n))$ the set of the critical points accumulated by $(p_n)$ and they can be divided into two sets: ${\rm Crit_I}(f, (p_n))= {\rm Crit_I}(f)\cap {\rm Crit}(f, (p_n))$ and ${\rm Crit_{II}}(f, (p_n))= {\rm Crit_{II}}(f)\cap {\rm Crit}(f, (p_n))$

\paragraph{Case ${\rm Crit_I}(f, (p_n))\neq \emptyset$: }

\begin{lemma}\label{interior} Under the hypothesis of theorem \ref{thm:finite classes}, let $(p_n)$ be  a sequence of saddle periodic points such for  each of them at least one unstable branch accumulates on the critical set and they accumulate on an interior point $x$ of a   set $\La$ with dominated splitting; it follows  that the periodic points of the sequence $(p_n)$ close enough to $x$  are homoclinically related. 

\end{lemma}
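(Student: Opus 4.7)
The plan is to exploit the local product structure at the interior point $x$ to produce transverse, hence crossing, intersections between the invariant manifolds of pairs $p_n, p_m$ sufficiently close to $x$. First, I would apply the interior point hypothesis to obtain recurrent points $z', z'' \in \La$ whose local stable and unstable arcs cross pairwise, bounding a small topological rectangle $R$ with $x$ in its interior. Because the dominated splitting $E^s \oplus F$ extends to a neighborhood of $\La$, the interior of $R$ carries a continuous foliation by short stable arcs, transverse to a foliation by short center-unstable arcs, with the four sides of $R$ being leaves of these respective foliations. For $n$ large enough, $p_n$ lies in the interior of $R$.

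Next, I would analyze the local invariant arcs of the periodic saddle $p_n$. It admits a local stable arc $W^s_\eps(p_n)$ tangent to $E^s$ and a local center-unstable arc $W^{cu}_\eps(p_n)$ tangent to $F$; these coincide with the leaves of the two foliations through $p_n$, so they extend across $R$ between the opposite pairs of boundary arcs. The hypothesis that an unstable branch of $p_n$ accumulates on the critical set, combined with lemma \ref{centerunstable}, rules out the alternative behaviors of this branch of $W^{cu}_\eps(p_n)$ (lying in the basin of a sink, or belonging to a normally hyperbolic periodic arc). Hence on at least the relevant side of $p_n$, the local center-unstable arc coincides with a local piece of $W^u(p_n)$.

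Then, for $p_n$ and $p_m$ both sufficiently close to $x$, the product structure of $R$ forces $W^s_\eps(p_n)$ and $W^{cu}_\eps(p_m)$ to meet transversely at a unique interior point of $R$, and symmetrically $W^s_\eps(p_m)$ meets $W^{cu}_\eps(p_n)$ transversely. Any transverse intersection of these arcs is automatically a crossing in the sense of definition \ref{defi:crossing}, since the unstable arc passes through and exits the stable curve on both sides. This would yield the desired generalized homoclinic relation between $p_n$ and $p_m$.

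The main obstacle I anticipate is in the second step: verifying that the transverse intersection produced inside $R$ actually falls on the true unstable manifold rather than on a non-unstable branch of the center-unstable (a normally hyperbolic arc or a portion of a sink basin). I would handle this by invoking the horseshoe-like skeleton inside $R$ carried by $z', z''$: a normally hyperbolic arc or a sink basin passing through the interior of $R$ is incompatible with the heteroclinic crossings of $W^s(z')$ with $W^u(z'')$, and vice versa, that bound the rectangle. Once this is established, the crossings produced in the third step lie on genuine unstable manifolds and give the homoclinic relation.
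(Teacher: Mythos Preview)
Your framework is right --- build the rectangle $R$ from the interior-point definition and force the invariant manifolds of nearby $p_n$ to cross its boundary --- but two key ingredients from the hypotheses are missing, and the ``center-unstable foliation'' you invoke does not do the work you assign to it.

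First, center-unstable manifolds in a dominated (non-hyperbolic) setting are not unique and do not assemble into a foliation of $R$; so the claim that $W^{cu}_\eps(p_n)$ ``coincides with the leaf through $p_n$ and hence extends across $R$'' is not justified. Moreover the $p_n$ need not lie in $\La$, so invoking the center-unstable lamination of $\La$ at $p_n$ is already problematic; what you actually have at $p_n$ is the genuine unstable manifold of a hyperbolic saddle, and the question is why a connected arc of it crosses $R$. The paper's device is simply to take $R$ small enough to be \emph{disjoint from the critical set} (possible because $\La$ has dominated splitting, hence contains no critical points). Then the unstable branch of $p_n$ that accumulates on the critical set cannot remain inside $R$ and must exit through the stable boundary of $R$. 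This replaces your foliation argument entirely and also dissolves the ``which side is the true unstable branch'' obstacle you flag at the end: one works directly with the branch that is known to reach the critical set.

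Second, you never use mild dissipation, and it is what forces the stable manifold to cross $R$. By mild dissipation each stable branch of $p_n$ reaches $\partial\DD$, so it cannot be contained in the small rectangle $R$ and must exit through the unstable boundary. (Your stable-foliation picture could perhaps be made to work here, but it needs an argument that $R$ is saturated by full stable leaves joining the two unstable sides; mild dissipation gives the conclusion directly.)

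Once both the stable and the unstable arcs of each $p_n$ cross the respective boundaries of $R$, the paper finishes with the $\lambda$-lemma together with the recurrence of the boundary points $z',z''$: the unstable of $p_n$ accumulates on the unstable boundary arcs and the stable of $p_m$ accumulates on the stable boundary arcs, which forces the required crossings between $W^u(p_n)$ and $W^s(p_m)$.
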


 \begin{proof}

From the fact that $\La$ has dominated splitting and $x$ is an interior point,  
one can get a small rectangle $R$   satisfying: 

\begin{itemize}
    \item[--]  it is bounded by local stable and unstable arc of recurrent points in $\La,$
    \item[--] $x$ is in the interior of $R$,
    \item[--] it is disjoint from the  critical points.

\end{itemize}

Given a periodic point $p_n$ let us consider $g=f^{per(p_n)}$. Observe that by mild dissipation, the stable branch of any $p_n$ close enough to $x$ is not contained in $R$ and therefore it intersects the unstable boundary of $R.$  Similarly, if $R$ is sufficiently small then there are no critical points in $R$ and so if at least one of the  unstable branches of $p_n$  accumulates on the critical points it follows  that  the unstable branch is  not contained in $R$ and so  it  intersect the stable boundary of $R.$ From that fact, the $\la-$lemma, and using   that the points that provides the stable and unstable arcs that bound the rectangle are recurrent, it follows that there are   stable and unstable arcs of any periodic $p_n$ close to $x$ that accumulates on the stable and unstable arcs that bound the rectangle; that implies that the stable and unstable manifold on any pair of periodic points close to $x$ intersects each other.

\end{proof}

\begin{lemma}\label{HC} Under the hypothesis of theorem \ref{thm:finite classes}, let $(p_n)$ be  a sequence of saddle periodic points such that  for each of them at least one unstable branch accumulates on the critical set and the sequence $(p_n)$ accumulates at a point $x$ such that $\omega(x)$ is a heterocycle; it follows  that the  periodic points in the sequence $(p_n)$ close enough to $x$  are homoclinically related. 

\end{lemma}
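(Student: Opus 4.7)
The plan is to adapt the argument of Lemma \ref{interior} to the heterocyclic case, replacing its ``interior rectangle'' by the one produced by Remark \ref{rmk:heterocycle} and working with a forward iterate $f^N(x)$ lying in that rectangle rather than with $x$ itself. Applying the Remark to $\omega(x)$ yields, for arbitrarily small diameter, a rectangle $R$ with a saddle periodic point $q\in\omega(x)$ in one corner, bounded by two parallel local stable arcs and two parallel local unstable arcs of $q$, and a positive integer $N$ with $f^N(x)\in\mathrm{int}(R)$. Shrinking $R$ (and enlarging $N$ accordingly) I would further ensure that a neighborhood of $R$ is disjoint from ${\rm Crit}(f,\La)$; this is possible because $\omega(x)$ admits a dominated splitting and hence contains no critical points.

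Since $p_n\to x$, one has $f^N(p_n)\in\mathrm{int}(R)$ for all $n$ large enough. The branch of $W^u(p_n)$ that accumulates on the critical set, after iteration by $f^N$, passes through $\mathrm{int}(R)$ and is unbounded; as ${\rm Crit}(f,\La)$ lies outside a neighborhood of $R$, this branch must exit $R$. It cannot cross an unstable arc of $q$, for a common point of $W^u(p_n)$ and $W^u(q)$ would force $p_n$ and $q$ to share the same $\alpha$-limit and thus the same periodic orbit---impossible for $n$ large. Hence the branch must exit through one of the stable arcs of $q$, producing a point in $W^u(p_n)\cap W^s(q)$. Symmetrically, mild dissipation ensures that every stable branch of the saddle $p_n$ meets the boundary of $\DD$, so a stable branch through $f^N(p_n)$ exits $R$ through an unstable arc of $q$, producing a point in $W^s(p_n)\cap W^u(q)$.

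To conclude, I would upgrade these intersections to crossings in the sense of Definition \ref{defi:crossing}: at the intersection point $y\in W^u(p_n)\cap W^s(q)$ the unstable branch enters $\mathrm{int}(R)$ on one side of the global stable leaf $W^s_\DD(q)$ and continues past $y$ to accumulate on ${\rm Crit}(f,\La)$, which lies outside $R$ and on the opposite side of $W^s_\DD(q)$, supplying the arcs $\ga^\pm$ required by Definition \ref{defi:crossing}. The analogous argument gives the crossing in $W^s(p_n)\cap W^u(q)$. Thus $p_n$ and $q$ are generalized homoclinically related, and since every $p_n$ close to $x$ is related to the same $q$, transitivity of the relation (checked via the $\lambda$-lemma: forward iterates of arcs of $W^u(p_n)$ become $C^1$-close to $W^u(q)$ and therefore cross $W^s(p_m)$) finishes the proof. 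The hard part will be the verification of the crossing condition as opposed to a mere topological intersection: one must ensure that the unstable branch of $p_n$ reaches both sides of the \emph{global} stable leaf $W^s_\DD(q)$, not merely of the local stable arc bounding $R$, which requires combining the $\lambda$-lemma with the mild dissipation hypothesis in a careful global argument in the disk.
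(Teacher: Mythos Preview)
Your proposal is correct and follows essentially the same route as the paper. The paper's proof is extremely terse: it invokes Remark~\ref{rmk:heterocycle} to obtain the rectangle $R$ with a periodic point $q$ of the heterocycle in one corner, says to argue as in Lemma~\ref{interior}, and concludes that the $p_n$ close to the heterocycle are homoclinically related to $q$ and hence to one another. Your use of the iterate $f^N(x)\in\mathrm{int}(R)$ is exactly the right way to interpret the paper's phrase ``close to the heterocycle'' (since $p_n\to x$ but only $f^N(x)$ lies in the rectangle), and your exit argument through the stable/unstable boundary of $R$ is the content of ``argue as in Lemma~\ref{interior}'' in this setting.

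One comment on your crossing discussion: your direct attempt to show that $W^u(p_n)$ reaches both sides of $W^s_\DD(q)$ by asserting that ${\rm Crit}(f,\La)$ lies on the opposite side of $W^s_\DD(q)$ is not justified as stated---there is no a priori reason the critical set must lie in a particular component of $\DD\setminus W^s_\DD(q)$. But you are right that the $\lambda$-lemma is the correct tool here, and it in fact gives the conclusion more directly than going through $q$: once $W^u(p_n)$ meets a stable boundary arc of $R$, forward iterates of arcs in $W^u(p_n)$ become $C^1$-close to the local unstable arcs $\gamma^u\subset W^u(q)$ bounding $R$; since $W^s(p_m)$ exits $R$ through $\gamma^u$, these nearby arcs of $W^u(p_n)$ cross $W^s_\DD(p_m)$. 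This is precisely the mechanism in the proof of Lemma~\ref{interior}, and the paper takes it for granted here as well. So your flagged ``hard part'' is real, but the fix you sketch via the $\lambda$-lemma is exactly what the paper has in mind.
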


\begin{proof} From the remark \ref{rmk:heterocycle} one gets  a rectangle $R$  containing a periodic point $q$ of the heterocycle in one corner and bounded by two parallel stable arcs and two parallel unstable arcs of that point. Arguing  as in lemma \ref{interior} it follows that  any periodic $p_n$ close to the heterocycle is homoclinically related to $q$ and so homoclinically related to each other.


\end{proof}





    


As a consequence of the previous two lemmas, it follows an immediate corollary:

\begin{corollary}\label{cor:crit1}
     Let $(p_n)$ be  a sequence of saddle periodic points such that for  each of them  at least one unstable branch accumulates on the critical set and  the orbits of the sequence $(p_n)$ accumulate on $Crit_I(f)$; it follows that  they belong to a finite number of homoclinic classes.
\end{corollary}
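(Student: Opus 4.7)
The plan is to argue by contradiction: assume the $p_n$'s realise infinitely many pairwise distinct generalised homoclinic classes, and extract one representative per class. First, using that the orbits accumulate on $Crit_I(f)$, I would produce iterates $x_n := f^{k_n}(p_n)$ converging to some $c \in Crit_I(f)$. Each $x_n$ lies on the orbit of $p_n$, hence belongs to the same homoclinic class, and at least one of its unstable branches still accumulates on the critical set.

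Next, I would invoke the defining dichotomy of $Crit_I(f)$: at least one of $\omega(c)$ and $\alpha(c)$ falls into one of the four categories (contains interior points, heterocycle, contained in a normally hyperbolic arc, or contained in the basin of a (semi)attracting periodic orbit). Up to replacing forward by backward iteration, we may assume $\omega(c)$ is of one of these types. The first two are the ``good'' cases: after a well-chosen large $K$ (and possibly a diagonal extraction), the sequence $y_n := f^K(x_n)$ accumulates on an interior point, respectively on a point whose $\omega$-limit is a heterocycle, and Lemma \ref{interior} or Lemma \ref{HC} yields that the $y_n$ with large $n$ are pairwise homoclinically related. Since homoclinic relatedness is an orbit-property, the corresponding $p_n$'s lie in a single class, contradicting our choice.

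For the remaining two categories, I would argue that they cannot occur at all. Indeed, for $K$ large, $f^K(c)$ lies in a trapping neighbourhood of a normally hyperbolic arc $A$ (respectively in the immediate basin of a sink $s$), and so does $f^K(x_n)$ for $n$ large. Being a periodic point, $f^K(x_n)$ must then sit on $A$ or coincide with $s$. In the first subcase the unstable manifold of $p_n$ is entirely contained in $A$, which admits a dominated splitting and hence contains no critical points, violating the branch-accumulation hypothesis; in the second subcase $p_n$ coincides with a sink, contradicting that it is a saddle.

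The main obstacle I foresee is precisely this last step: ruling out that a saddle whose orbit accumulates on a critical point with $\omega$-limit trapped in an NH arc (or in a basin) could exist at all. The argument hinges on two facts — normal hyperbolicity forces a periodic orbit entering the trapping neighbourhood to land inside $A$, and a dominated-splitting set contains no critical point — which together forbid the branch-accumulation hypothesis. Once this rigidity is in place, the reduction in cases (a) and (b) to Lemmas \ref{interior} and \ref{HC} is a routine compactness-and-continuity argument.
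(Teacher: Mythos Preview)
Your approach is correct and is essentially what the paper has in mind --- it gives no proof, declaring the corollary an ``immediate'' consequence of Lemmas~\ref{interior} and~\ref{HC}. Your case-by-case treatment of the four alternatives defining $Crit_I(f)$, including the normally-hyperbolic-arc and basin cases that the paper passes over in silence, is the right way to expand that remark.

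Two small corrections. First, ``up to replacing forward by backward iteration'' is not legitimate here: the system is dissipative and mildly dissipative, neither property survives under $f\mapsto f^{-1}$, and Lemma~\ref{HC} is stated only for $\omega$-limits. What actually makes the $\alpha(c)$ cases go through is that the \emph{proofs} of Lemmas~\ref{interior} and~\ref{HC} need only that orbits of the $p_n$ enter a small rectangle bounded by stable and unstable arcs near the relevant dominated set. When it is $\alpha(c)$ rather than $\omega(c)$ that carries the structure, a diagonal extraction sends iterates of the $p_n$ close to backward iterates of $c$ and hence into such a rectangle, after which the argument runs unchanged. You should say this rather than invoke a non-existent time symmetry.

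Second, in the basin case your conclusion ``$p_n$ coincides with a sink, contradicting that it is a saddle'' fails when $s$ is only semi-attracting, since a semi-attracting periodic point \emph{is} a saddle in the paper's terminology. The fix is painless: once an iterate of the periodic point $p_n$ lands in the open basin it must lie on the orbit of $s$, so all such $p_n$ share a single orbit and hence a single homoclinic class --- which is all you need for the contradiction.
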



\paragraph{Case ${\rm Crit_I}(f, (p_n))=\emptyset$.} In that case, ${\rm Crit}(f, (p_n))= {\rm Crit_{II}}(f, (p_n)).$ By proposition \ref{PS2-1} there are only a finite number of boundary saddle periodic points that contain the critical points in $Crit_{II}(f).$  So, to conclude proposition \ref{critical-Misiu} it remains to consider the sequence of periodic points that accumulate only on boundary critical points.

\begin{definition}\label{def:*-seq}  
Let $(p_n)$ be a sequence of saddle periodic points such that for each of them at least one of its unstable branches accumulates at a critical point. Let $\La$ be the accumulation set of the orbits of the periodic points $(p_n)$; we say that the sequence $(p_n)$ is a {\it $*-$sequence} if 

\begin{itemize}
    \item[--] the set of critical points in $\La$ is finite and they are all boundary  critical points,

    \item[--] any  compact invariant set with dominated splitting  contained in $\La$ is formed by a finite number of saddle periodic orbits.
\end{itemize}

\end{definition}

Therefore, as a consequence of  corollary \ref{cor:crit1},  to finish the proof of proposition \ref{critical-Misiu} it remains to prove the following proposition.

\begin{proposition}\label{prop:*-seq} Under the hypothesis of theorem \ref{thm:finite classes}, let $(p_n)$ be a $*-$sequence, then it holds that they belong to a finite number of homoclinic classes.
\end{proposition}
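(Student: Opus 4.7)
The plan is to exploit the finiteness of boundary critical points and the associated boundary saddle periodic points, then to group the $*$-sequence into finitely many generalized homoclinic classes by invoking the $\lambda$-lemma at each boundary critical point.

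First I would make precise the finite configuration sitting inside $\Lambda$. By the Misiurewicz hypothesis, every critical point $c$ is non-recurrent, so $\omega(c)$ and $\alpha(c)$ contain no critical point, and by Theorem~\ref{p.existence} they carry a dominated splitting. Hence Proposition~\ref{dicho-omega} applies, and because $c \in \operatorname{Crit}_{II}(f)$ excludes interior points, heterocycles, semi-attracting basins, and normally hyperbolic arcs, both $\omega(c)$ and $\alpha(c)$ must reduce to a single saddle periodic orbit. Since $\Lambda$ contains only finitely many critical points $c_1,\dots,c_k$, the associated saddles $q_i^+ := \omega(c_i)$ and $q_i^- := \alpha(c_i)$ form a finite family, consistent with Remark~\ref{rmk:finite-boundary}.

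The heart of the argument would then consist in proving the following claim: there exists $\rho>0$ such that every element $p_n$ of the $*$-sequence whose orbit enters $B_\rho(c_i)$ is generalized-homoclinically related to $q_i^+$ (equivalently to $q_i^-$, which itself is related to $q_i^+$ through the tangency at $c_i$ combined with Proposition~\ref{PS2-1}). Since an unstable branch $\Gamma^u(p_n)$ accumulates on the critical set and the only critical points in $\Lambda$ are the $c_i$'s, one of these branches accumulates at some $c_i$. As $c_i \in W^s(q_i^+)$, the forward orbit of $c_i$ stays inside a neighborhood of $q_i^+$ where the dominated splitting of $\omega(c_i)$ persists; applying the classical $\lambda$-lemma along a fundamental domain of $\Gamma^u(p_n)$ pushed forward past $c_i$, I would conclude that $\Gamma^u(p_n)$ accumulates on the unstable manifold of $q_i^+$. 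Symmetrically, iterating backwards from a neighborhood of $c_i$ using $c_i \in W^u(q_i^-)$, some stable branch of $p_n$ accumulates on $W^s(q_i^-)$. Since mild dissipation forces every stable branch of $q_i^+$ to exit the disk $\DD$, the accumulating unstable curves of $p_n$ on $W^u(q_i^+)$ must cross these stable separating arcs, producing crossing intersections in the sense of Definition~\ref{defi:crossing}. The residual $p_n$'s whose orbits stay outside $\bigcup_i B_\rho(c_i)$ accumulate on a closed invariant subset of $\Lambda$ free of critical points, hence carrying a dominated splitting; by the $*$-sequence assumption this subset is a finite union of saddle periodic orbits, and Proposition~\ref{PS2-1} regroups those $p_n$ into finitely many homoclinic classes.

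The main obstacle I anticipate is not the finiteness count itself but the upgrade from mere $C^1$-closeness (which is all the $\lambda$-lemma a priori delivers near a tangency-like critical point) to the genuine two-sided \emph{crossing} required by Definition~\ref{defi:crossing}. The delicate point is that $c_i$ is a priori a boundary tangency of $W^u(q_i^-)$ with $W^s(q_i^+)$, so the relative position of $\Gamma^u(p_n)$ with respect to $W^s(q_i^+)$ near $c_i$ is not automatically crossing. I would address this by exploiting that $c_i \in \operatorname{Crit}_{II}(f)$: the non-recurrence of $c_i$ together with the Misiurewicz neighborhood $V$ of $\operatorname{Crit}(f,\Lambda)$ ensures that outside $V$ the dynamics enjoys a uniform dominated splitting, so unstable arcs of $p_n$ traveling from $B_\rho(c_i)$ forward through the domination region accumulate on \emph{both} sides of $W^s(q_i^+)$; this produces the required crossing and yields generalized homoclinic relatedness. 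Combined with the finiteness of the $q_i^+$, this closes the proof.
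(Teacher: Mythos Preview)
Your approach has a genuine gap at exactly the point you yourself flag: upgrading accumulation near a boundary critical point $c_i$ to a \emph{crossing} intersection in the sense of Definition~\ref{defi:crossing}. The difficulty is structural, not technical, and your proposed fix does not resolve it.

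Concretely: by definition $c_i$ is a tangency-type point, lying on $W^s(q_i^+)$ and on $W^u(q_i^-)$ with collinear tangent directions. The unstable arcs of the $p_n$ that accumulate on $c_i$ may well approach along $W^u(q_i^-)$, i.e.\ tangentially to $W^s(q_i^+)$, and hence lie entirely on one side of $W^s_\DD(q_i^+)$. The classical $\lambda$-lemma requires a transversal disc to $W^s(q_i^+)$; you do not have one near $c_i$, so you cannot conclude that forward iterates of $\Gamma^u(p_n)$ accumulate on $W^u(q_i^+)$, let alone cross $W^s(q_i^+)$. Your sentence ``unstable arcs of $p_n$ traveling from $B_\rho(c_i)$ forward through the domination region accumulate on both sides of $W^s(q_i^+)$'' is the crux and is unsupported: dominated splitting outside $V$ only tells you that $F$-tangent arcs grow, not on which side of a given separatrix they end up. Moreover, even granting a crossing $W^u(p_n)\cap W^s(q_i^+)$, you never produce the reverse crossing $W^u(q_i^+)\cap W^s(p_n)$ needed for a generalized homoclinic relation; your parenthetical that $q_i^-$ and $q_i^+$ are related ``through the tangency at $c_i$ combined with Proposition~\ref{PS2-1}'' is incorrect, since a tangency is precisely not a crossing and Proposition~\ref{PS2-1} applies to dominated sets, which the orbit closure of $c_i$ is not.

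The paper's proof is built around exactly this obstruction. It does not attempt to relate the $p_n$ directly to the boundary saddles $q_i^+$. Instead it first follows the orbits of the $p_n$ through the finite family of boundary critical points to extract a \emph{critical cycle} of quadrants $(q_i,Q_i,c_i,y_i)$, then enlarges it to a maximal critical cycle, and finally (Proposition~\ref{existence of touching}) shows that a maximal critical cycle is either already a \emph{touching} cycle (unstable branches actually land on stable branches, not merely accumulate) or can be enlarged to one by inserting auxiliary saddles found via Pixton disks and the dichotomy of Proposition~\ref{prop trans}. Only once a touching cycle is in hand does Proposition~\ref{touching-cycle} (an extension of a nontrivial result from \cite{CPT}) manufacture rectangles mapped across themselves, producing genuine horseshoe-type crossings that force all nearby $p_n$ in the correct quadrant to be homoclinically related (Corollary~\ref{touching}). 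The passage from ``accumulation'' to ``touching'' is the substantive content and cannot be short-circuited by a $\lambda$-lemma argument.
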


To prove proposition \ref{prop:*-seq}, the goal is to show that there is a ``touching cycle" (defined below) between  periodic points that ``encloses" some iterates of  the $*-$sequence and that is used to show  that the periodic points in  $*-$sequence are all homoclinically related.

\paragraph{Quadrants, sequences and cycles.} Given a saddle (or semi-attracting) periodic point  $q$, its local  stable and center-unstable branches (denoted as $\Ga_l^s(q)$ and $\Ga^{cu}_l(q)$ respectively) define at most $4$ quadrants ($2$ in case that $q$ is semi-attracting); more precisely, the complement of the union of the local  stable and center-unstable set $q$ in a small neighborhood of it has $4$ connected components; let us denote by $(q,Q)$ one of these quadrants that can also be identified with the triplet $(q, \Gamma^s_l(q), \Gamma^{cu}_l(q))$ that indicates  the branches that bounds the quadrant. When the center-unstable branch is an unstable branch, we denote $\Gamma^{cu}_l(q)$ with $\Ga^u_l(q).$ 

Given a point $x$ that belongs to the local stable branch of a periodic saddle $q$ and a sequence $(p_n)$ that accumulates at $x$ on one side of the local stable manifold  of $q$, there exists a quadrant $Q$ bounded by branches $\Gamma^s_l(q), \Gamma^u_l(q)$ such that 
\begin{itemize}
    \item[--] the sequence $(p_n)$ is contained in $Q$,
    \item[--] there exist $y\in \Gamma^u_l(q)$ and positive integers $(k_n)$ such that $f^{k_n}(p_n)\to y$,
    \item[--] for any $n$ and any  $0\leq j \leq k_n$ it holds that $f^j(p_n)\in Q.$
\end{itemize}


\begin{definition}\label{def-cycle-points}
Given a finite set of saddle periodic points $(q_i)$  and respective quadrants $(q_i, \Ga^s_l(q_i), \Ga^u_l(q_i))$, we say that there is a {\it cycle} if for each $i$, 
\begin{itemize}
    \item[--] there is a point $(y_i)$ in $\Ga^{u}_l(q_i),$
    \item[--] there is a point $(x_i)$ in $\Ga^{s}_l(q_i),$
     \item[--] a sequence of forward iterates $(f^{n_j}(y_i))$ contained in the quadrant bounded by  $\Ga^s_l(q_{i+1})$ and $\Ga^u_l(q_{i+1})$ and  accumulating on  $x_{i+1}\in \Ga^{s}_l(q_{i+1})$.
\end{itemize}

\noindent We say that the cycle is a {\it critical  cycle} if the points $(x_i)$ are critical points.

\noindent If for any $i$ there exists $n_i>0$ such that $f^{n_i}(y_i)=x_{i+1} \in\Ga^s_l(q_{i+1})$ and there is an arc in $\Ga^u(q_i)$ that contains $x_{i+1}$ and intersects $Q_{i+1}$, we say that the cycle is a {\it touching cycle}.

\end{definition}

To incorporate all the data defined above, a cycle between periodic points and quadrants is noted as a finite sequence of quadruplets $(q_i, Q_i, x_i, y_i).$



\begin{figure}[h]
\centering
\includegraphics[scale=0.45]{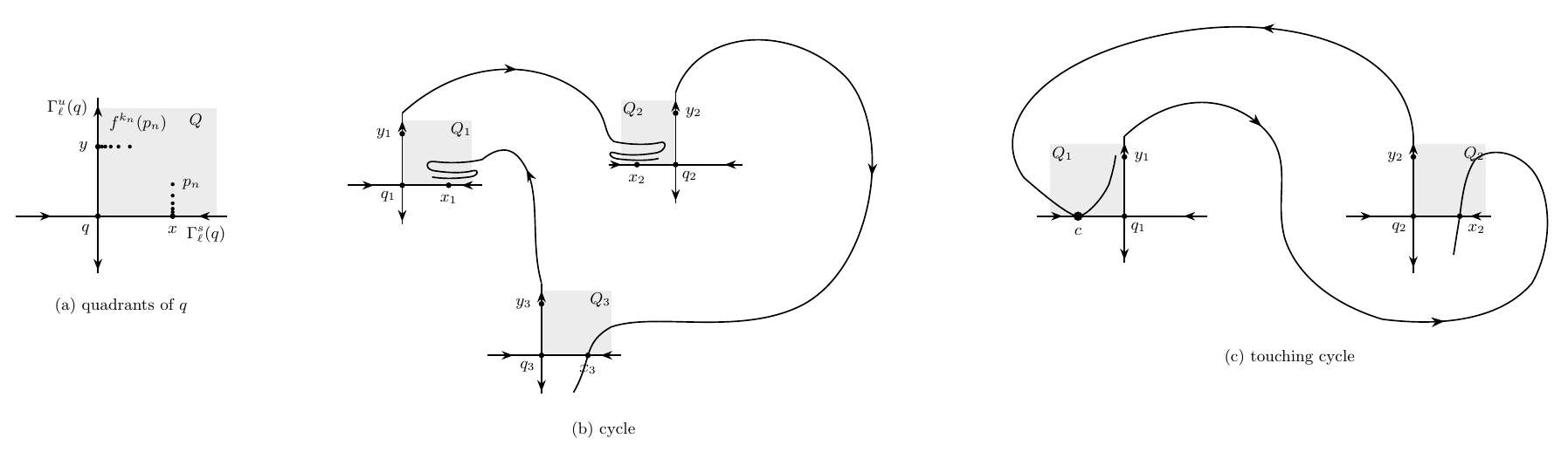}
\caption{Quadrants and cycles}
\label{cycles}
\end{figure}

\begin{proposition}\label{touching-cycle}  Given a touching cycle  of a mildly dissipative diffeomorphisms of the disk, for any saddle $q$ in the cycle there is a sequence $(q_n)$ of hyperbolic saddle periodic points  and  a point $x\in W^s_{\DD}(q)$ such that 

\begin{itemize}
\item[--] the sequence  $(q_n)$ accumulates at $x$ and  is contained in the quadrant $Q$ of $q$ involved in the cycle,
    \item[--] all saddles $(q_n)$ are homoclinically related, 
    \item[--]$W^s_{\DD}(q_n)$ intersects transversely $ \Ga^u_{l}(q)$ (the unstable branch of $q$ that bounds $Q$)  and  $W^s_{\DD}(q_n)\to W^s_{\DD}(q)$, 
    \item[--]  there is an arc $\ga^u_n\subset W^u(q_n)$ contained in $Q$ that joins $q_n$ with a point in  $W^s_\DD(q_{n+1}).$
\end{itemize}

\end{proposition}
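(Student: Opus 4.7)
The strategy is to turn the touching cycle $(q_i,Q_i,x_i,y_i)_{i=1}^\ell$ into a Smale horseshoe configuration at each $q_i$ by running a $\la$-lemma argument once around the cycle. Fix $q=q_1$, $Q=Q_1$, and take $x:=x_1\in \Ga^s_l(q)$. Starting from a short sub-arc $\alpha_0\subset\Ga^u_l(q)$ containing $y_1$, the image $f^{n_1}(\alpha_0)$ contains $x_2$ and enters $Q_2$ across $\Ga^s_l(q_2)$ by the touching-cycle hypothesis. The $\la$-lemma at $q_2$ then shows that large forward iterates of $f^{n_1}(\alpha_0)\cap Q_2$ accumulate in $C^1$ on $\Ga^u_l(q_2)$, producing arcs near $y_2$; applying $f^{n_2}$ sends them into $Q_3$ across $\Ga^s_l(q_3)$ near $x_3$, and so on. After one full turn around the cycle we obtain a sequence of return times $T_n\to\infty$ such that $f^{T_n}(\alpha_0)$ contains arcs inside $Q$ that are $C^1$-close to the piece of $\Ga^u_l(q)$ through $y_1$.

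Now let $R\subset Q$ be a small curvilinear rectangle bounded by two arcs of $\Ga^u_l(q)$ and two short local stable leaves, so that $R$ is a neighborhood of a piece of $\Ga^s_l(q)$ through $x$. For each large $n$, the arc $f^{T_n}(\alpha_0)$ contains a sub-arc crossing $R$ from one ``vertical'' side to the other; pulling this back through the local hyperbolic dynamics at $q$ yields a thin horizontal strip $H_n\subset R$ with $V_n:=f^{T_n}(H_n)$ a thin vertical strip crossing $R$ top-to-bottom, and the $H_n$ nest toward the arc $\Ga^s_l(q)\cap R$ as $n\to\infty$. The stable/unstable cone fields at the $q_i$ are preserved along the cycle and refined by the $\la$-lemma iterations, so $f^{T_n}$ on $H_n$ satisfies the Conley--Moser horseshoe criterion uniformly in $n$; hence $\bigcap_{k\in\ZZ} f^{-kT_n}(H_n)$ contains a hyperbolic fixed point $q_n$ of $f^{T_n}$, which is the wanted periodic saddle. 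By construction $q_n\in Q$ and $q_n\to x$, while graph-transform across $H_n$ gives $W^s_\DD(q_n)\to W^s_\DD(q)$ in $C^1$ with $W^s_\DD(q_n)$ crossing $\Ga^u_l(q)$ transversally. Two such points $q_n,q_m$ are homoclinically related because the horizontal strip containing one meets the vertical strip containing the other inside $R$ transversally, yielding by Birkhoff--Smale transverse intersections $W^u(q_n)\pitchfork W^s(q_m)$ and vice versa. Finally, $W^u(q_n)$ is locally horizontal through $q_n$ in $R$ while $W^s(q_{n+1})\cap R$ is locally vertical through $q_{n+1}\in R$, so they meet at a transverse point close to both; the sub-arc $\ga^u_n\subset W^u(q_n)$ from $q_n$ to that point stays in $R\subset Q$ and lands on $W^s_\DD(q_{n+1})$, as required.

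The main obstacle is the $\la$-lemma step at the start: the equality $f^{n_i}(y_i)=x_{i+1}$ is only a \emph{touching} and a priori tangential intersection between $W^u(q_i)$ and $\Ga^s_l(q_{i+1})$, so the classical $C^1$ $\la$-lemma does not directly apply. What one does know is that the arc $f^{n_i}(\alpha_0)$ genuinely crosses $\Ga^s_l(q_{i+1})$ by entering $Q_{i+1}$; a topological $\la$-lemma then supplies Hausdorff accumulation of its forward iterates on $\Ga^u_l(q_{i+1})$, and this must be upgraded to $C^1$ accumulation using mild dissipation together with the dominated splitting available along the cycle away from the critical points. Securing this $C^1$ upgrade with uniform constants — so that the cone fields on $H_n$ do not deteriorate as $n\to\infty$ — is the delicate technical point; once it is done, the horseshoe construction and the four properties in the statement follow by standard Birkhoff--Smale arguments.
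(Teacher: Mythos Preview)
Your strategy is the same as the paper's: run the unstable branch once around the touching cycle, build thin rectangles near $x\in\Ga^s_l(q)$ whose high returns cross them in a Markov fashion, and read off the saddles $q_n$ from the resulting horseshoe. The paper does not carry this out explicitly; it simply cites Lemma~5.8 of \cite{CPT}, whose scheme is to produce a nested family of rectangles $R_n$ parallel to $W^s_\DD(q)$ accumulating on $x$ and to check that some iterate $f^{r_n}(R_n)$ meets $R_n$ transversely in at least two connected components (giving a $2$-horseshoe in each $R_n$), with the extra feature that $f^{r_n}(R_n)$ also crosses $R_{n+1}$ --- this last crossing is what produces the arc $\ga^u_n\subset W^u(q_n)$ landing on $W^s_\DD(q_{n+1})$.

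The obstacle you flag is real and is precisely where your sketch and the CPT machinery part ways. The touching hypothesis only says that an arc of $W^u(q_i)$ through $x_{i+1}$ enters the quadrant $Q_{i+1}$; the contact with $\Ga^s_l(q_{i+1})$ may well be tangential, so the classical $C^1$ inclination lemma does not apply and your Conley--Moser cone-field verification, as written, does not close (your proposed fix via ``dominated splitting along the cycle away from the critical points'' is not available here: nothing in the statement places the cycle away from the critical set). The resolution used in \cite{CPT} is more topological than the $C^1$ upgrade you propose: the vertical sides of the rectangles $R_n$ are genuine pieces of stable manifolds which, by mild dissipation, reach $\partial\DD$, and the ``crossing in at least two components'' is a topological trapping statement forced by how the returning unstable arc is squeezed between successive stable leaves. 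Hyperbolicity of the resulting periodic points then comes from the ambient dissipation (contraction of the stable width of $R_n$) together with the topological stretching in the center-unstable direction, not from a uniform $C^1$ cone field carried around the cycle. So your outline is correct in spirit, but the delicate step you honestly single out is handled in the paper by citation, and via a topological rectangle argument rather than the $C^1$ route you sketch. (A small side remark: your rectangle $R$ ``bounded by two arcs of $\Ga^u_l(q)$'' does not quite parse, since $\Ga^u_l(q)$ is a single branch; you presumably mean one unstable side on $\Ga^u_l(q)$ and the opposite side on a returning unstable arc.)
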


\begin{proof} The proof extends that of lemma 5.8 in \cite{CPT}. We recall
its mechanism and indicate the additional input. Given the saddle $q$ in the
cycle (assumed fixed) and $x\in W^s_\DD(q)$ accumulated, inside the quadrant
$Q$, by the unstable branch $\Ga^u_l(q)$ involved in the cycle, lemma 5.8 of
\cite{CPT} produces small rectangles $R_n$ that accumulate on $x$,
parallel to $W^s_\DD(q)$, together with return times $r_n$ such that
$f^{r_n}(R_n)$ crosses $R_n$ transversally in at least two components. Each
such crossing yields, by the classical horseshoe construction, a hyperbolic
saddle $q_n$ contained in a non-trivial homoclinic class, with $q_n\to x$;
moreover the local stable manifolds satisfy $W^s_\DD(q_n)\to W^s_\DD(q)$ and
cross $\Ga^u_l(q)$ transversally.

The additional point, not needed in \cite{CPT}, is the last item of the
statement: that an unstable arc $\ga^u_n\subset W^u(q_n)$ inside $Q$ joins
$q_n$ to a point of $W^s_\DD(q_{n+1})$. This is obtained from the dissipation
exactly as the transversal crossing above: the rectangles $R_n$ can be
chosen so that $f^{r_n}(R_n)$ meets not only $R_n$ but also $R_{n+1}$, and
the component of $W^u(q_n)\cap Q$ realizing the crossing of $R_{n+1}$
furnishes the arc $\ga^u_n$ reaching $W^s_\DD(q_{n+1})$. In particular
$W^u(q_n)$ crosses $W^s_\DD(q_{n+1})$ and, symmetrically, $W^u(q_{n+1})$
crosses $W^s_\DD(q_n)$; hence consecutive saddles $q_n,q_{n+1}$ are
homoclinically related, and therefore all the $q_n$ belong to a single
homoclinic class.

\end{proof}

\begin{corollary}\label{touching} Under the hypothesis of theorem \ref{thm:finite classes}, given a touching cycle, it follows that  all saddle periodic points satisfying 

\begin{itemize}
    \item[--]  for  each of them  at least one unstable branch accumulates on the critical set,
    \item[--] they are close enough to at least one saddle involved in the  cycle and  are  contained in the correspondent quadrant involved in the cycle,
\end{itemize}
are homoclinically related.
    
\end{corollary}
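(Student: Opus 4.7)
The plan is to combine Proposition~\ref{touching-cycle} applied at every vertex of the touching cycle with a chaining argument based on the $\lambda$-lemma and the cycle structure. First I produce, near each vertex, a large family of mutually homoclinically related hyperbolic saddles; then I show that consecutive families merge across each edge of the cycle; since the cycle closes up, composing the chain both ways yields a single generalized homoclinic class; finally every saddle $p$ satisfying the hypotheses is absorbed into this class.

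Step one. Applying Proposition~\ref{touching-cycle} at each vertex $q_i$ of the cycle produces a sequence $(q_n^{(i)})_n\subset Q_i$ of mutually homoclinically related hyperbolic saddles accumulating on $x_i\in W^s_\DD(q_i)$, whose stable manifolds $W^s_\DD(q_n^{(i)})$ converge to $W^s_\DD(q_i)$ while transversely crossing $\Ga^u_l(q_i)$, and whose unstable branches contain arcs $\gamma^u_n\subset Q_i$ joining $q_n^{(i)}$ to $W^s_\DD(q_{n+1}^{(i)})$.

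Step two. For each edge $q_i\to q_{i+1}$ of the cycle, the touching hypothesis gives an arc $A\subset W^u(q_i)$ through $x_{i+1}\in\Ga^s_l(q_{i+1})$ that enters $Q_{i+1}$. Since $W^s_\DD(q_m^{(i+1)})\to W^s_\DD(q_{i+1})$ and, by mild dissipation, these stable curves separate $\DD$, for every large $m$ the arc $A$ crosses $W^s_\DD(q_m^{(i+1)})$ transversely inside $Q_{i+1}$. The $\lambda$-lemma applied inside the homoclinic class of $(q_n^{(i)})$, using the transverse intersection $W^s_\DD(q_n^{(i)})\pitchfork \Ga^u_l(q_i)$, implies that forward iterates of $\gamma^u_n$ accumulate on every compact piece of $W^u(q_i)$ --- in particular on $A$. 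Hence $W^u(q_n^{(i)})$ also crosses $W^s_\DD(q_m^{(i+1)})$ transversely for $n,m$ large. This yields a one-way crossing chain around the cycle.

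Step three. Because the cycle closes up, starting from any family $(q_n^{(i)})$ and running the chain all the way around produces crossings of $W^u(q_n^{(i)})$ with $W^s(q_m^{(i)})$ and more generally between any two of the families $(q_n^{(i)})$ and $(q_m^{(j)})$ (using, at each composition, that the $q_\bullet^{(k)}$'s within the same family are themselves homoclinically related to spread the crossings across the family). So all the families together form a single generalized homoclinic class.

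Step four. Let $p$ be a saddle close to some $q_i$, contained in $Q_i$, with one unstable branch accumulating on the critical set. Closeness forces $\Ga^u_l(p)$ to be close to $\Ga^u_l(q_i)$, and the transverse intersection $W^s_\DD(q_n^{(i)})\pitchfork \Ga^u_l(q_i)$ persists as $W^s(q_n^{(i)})\pitchfork W^u(p)$ for $n$ large. Conversely, $W^s_\DD(p)$ is a separating curve (mild dissipation) close to $W^s_\DD(q_i)$; by Step three, $W^u(q_n^{(i)})$ already carries an arc transversely crossing $W^s(q_m^{(i)})$ for $m$ large, and by accumulation $W^s(q_m^{(i)})\to W^s_\DD(q_i)$, so this arc also crosses the nearby curve $W^s_\DD(p)$. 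Hence $p$ and $q_n^{(i)}$ are generalized homoclinically related, which completes the proof.

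I expect the main obstacle to be Step two: the intersection of $W^u(q_i)$ with $W^s(q_{i+1})$ at the touching point $x_{i+1}$ can be tangential, so the $\lambda$-lemma cannot be invoked directly there. The crux is to replace this possibly tangential touching by the genuinely transverse crossings with the approximating stable manifolds $W^s_\DD(q_m^{(i+1)})$, using that mild dissipation forces these curves to be separating in $\DD$ and therefore to be crossed by $A$ once they are close enough to $\Ga^s_l(q_{i+1})$.
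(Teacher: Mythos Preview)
Your Steps 1--3 are reasonable and in fact more explicit than the paper, which argues locally at a single vertex and takes the cross-vertex relations as implicit in the horseshoe construction underlying Proposition~\ref{touching-cycle}. The gap is Step~4. Neither closeness argument delivers the required crossing. For $W^u(p)\cap W^s(q_n^{(i)})$: $p$ is a nearby point in $Q_i$, not a perturbation of $q_i$, so $\Gamma^u_l(p)$ is a parallel center-unstable leaf through a different base point and the intersection $W^s_\DD(q_n^{(i)})\pitchfork\Gamma^u_l(q_i)$ does not ``persist'' to it by continuity. For $W^u(q_n^{(i)})\cap W^s(p)$ the situation is worse: the $(q_n^{(i)})$ and the arcs $\gamma^u_n$ live near $x_i\in W^s_\DD(q_i)$, which need not be close to $q_i$, while $W^s_\DD(p)$ is only \emph{locally} close to $W^s_\DD(q_i)$; the crossings with $W^s_\DD(q_m^{(i)})$ you obtained (near $x_i$, or after circulating the cycle) give no information near $p$. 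Note also that you never use the hypothesis that $W^u(p)$ accumulates on the critical set.

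The paper's mechanism is a topological trapping rectangle built from \emph{forward iterates} of the $q_n$. Since $q_n\to x_i\in W^s(q_i)$, suitable iterates $q'_m$ of the $q_n$ converge to $q_i$; two consecutive ones give stable curves $W^s_\DD(q'_m)$, $W^s_\DD(q'_{m+1})$ which, together with an iterated arc $(\gamma^u)'\subset W^u(q_n)$ and $\Gamma^u_l(q_i)$, bound a rectangle $R$ near $q_i$ containing $p$ in its interior. Both crossings are then forced by exit: there are no critical points near $q_i$ (Misiurewicz) but $W^u(p)$ accumulates on the critical set, so $W^u(p)$ must leave $R$ through one of the stable sides $W^s_\DD(q'_m)$ or $W^s_\DD(q'_{m+1})$; and by mild dissipation $W^s_\DD(p)$ separates $\DD$, so it must leave $R$ through an unstable side --- if through $\Gamma^u_l(q_i)$, a $\lambda$-lemma step at $q_i$ then makes $W^s(p)$ accumulate on $W^s_\DD(q_i)$ from the $Q_i$ side and hence cross some $\gamma^u_k$. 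This exit argument, and in particular the role of the critical-set hypothesis on $p$, is the idea missing from your Step~4.
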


\begin{proof} Let $(q, Q)$ be a saddle and a quadrant involved in the touching cycle and observe that nearby $q$ there are no critical points. Let $(q_n)$ be the sequence of saddle periodic points provided by proposition \ref{touching-cycle}. Given a saddle periodic point $p$ sufficiently close to $q$, one can get two consecutive saddles $q'_m$ and $q'_{m+1}$ that are two forward iterates of two points in the sequence $(q_n)$ such that the arcs $W^s_\DD(q'_m), W^s_\DD(q'_{m+1}), {\ga^u_n}'$ and $\Ga^u_q$ provided by proposition \ref{touching-cycle} form a rectangle $R_n$ close to $q$ that contains $p$ in its interior. Since there are no critical points nearby $q$ and the unstable manifold of $p$ accumulates at a critical point then it has to cross either $W^s_\DD(q_m)$ or $W^s_\DD(q_{m+1});$ on the other hand, $W^s_\DD(p)$ has to cross either ${\ga^u_m}'$ of $\Ga^u_l(q)$ and observe that  if $W^s_\DD(p)$ crosses $\Ga^u_l(q)$ then the stable manifold of $p$ accumulates on $W^s_\DD(q)$ from the side of $Q$ and so it crosses an arc $\ga^u_k$ of some saddle point $q_k$     in the sequence $(q_n).$ Since all the saddles $(q_n)$ are homoclinically related it follows that $p$ is also homoclinically related to them.

\end{proof}

To conclude proposition \ref{critical-Misiu}  it is enough to prove the following proposition.

\begin{proposition}\label{prop:*-touching} Under the hypothesis of theorem \ref{thm:finite classes} given a $*-$sequence it follows that  there exists a touching cycle such that the $*-$sequence satisfies the hypothesis of corollary \ref{touching}.

\end{proposition}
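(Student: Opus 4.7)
The plan is to construct the touching cycle directly from the heteroclinic connections that the boundary critical points provide between boundary saddles in the accumulation set $\La := \overline{\bigcup_n \cO(p_n)}$, and then to verify that the orbits of the $*-$sequence fall into the quadrants of that cycle.

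First I would pin down the structure of $\La$. Let $\{c_1,\dots,c_r\}$ be the critical points in $\La$, a finite set consisting of boundary critical points by definition of a $*-$sequence. Applying Proposition~\ref{dicho-omega} to $\omega(c_i)$ and $\alpha(c_i)$: the Misiurewicz hypothesis forces these $\omega$- and $\alpha$-limits to avoid a neighborhood of the critical set and hence to have a dominated splitting, while the $*-$sequence property forbids interior points or heterocycles among dominated closed invariant subsets of $\La$. Hence $\omega(c_i)$ and $\alpha(c_i)$ are saddle periodic orbits $\cO(q^+_i)$ and $\cO(q^-_i)$, necessarily boundary saddles; each $c_i$ lies in $W^u(q^-_i)\cap W^s(q^+_i)$ and, by Lemma~\ref{lem:tang-crit}, its orbit is a tangency orbit of these invariant manifolds.

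Next I would extract the cyclic pattern. Form the finite directed graph $G$ whose vertices are the boundary saddles (finite by Remark~\ref{rmk:finite-boundary}) and whose edges $q^-_i \to q^+_i$ are labelled by $c_i$. For $n$ large, the periodic orbit $\cO(p_n)$ shadows a closed walk in $G$: one of its unstable branches accumulates on some $c_i$, so iterates of $p_n$ approach $q^+_i$; after escaping a neighborhood of $q^+_i$ along its unstable manifold, the orbit next approaches some critical point $c_j$, which must sit in $W^u(q^+_i)$ and therefore satisfies $q^-_j = q^+_i$. Since $G$ has finitely many cycles, a pigeonhole argument produces, after passing to a subsequence, a single cycle $q_0 \to q_1 \to \cdots \to q_{k-1} \to q_k = q_0$ in $G$ with edges labelled by critical points $c_{(0)},\dots,c_{(k-1)}$ and shadowed by every orbit in the subsequence.

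The touching cycle is then built by choosing, for each $i$, integers $N_i,M_i$ large enough that $y_i := f^{-N_i}(c_{(i)}) \in \Ga^u_l(q_i)$ and $x_{i+1} := f^{M_i}(c_{(i)}) \in \Ga^s_l(q_{i+1})$. The identity $f^{N_i+M_i}(y_i) = x_{i+1}$ provides the exact iterate required for a touching cycle, and the arc of $\Ga^u(q_i)$ joining $y_i$ to $x_{i+1}$ is the required unstable arc containing $x_{i+1}$. The quadrant $Q_i$ is chosen on the side of $\Ga^s_l(q_i)\cup\Ga^u_l(q_i)$ where the shadowing iterates of $(p_n)$ sit, which by construction gives the quadrant conditions in Corollary~\ref{touching}. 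The main obstacle I expect is checking that this unstable arc actually enters $Q_{i+1}$: at $x_{i+1}$ the unstable manifold of $q_i$ is tangent to $W^s(q_{i+1})$, so the arc lies on one side of $\Ga^s_l(q_{i+1})$ locally, and one must show that this side matches the quadrant $Q_{i+1}$ holding the shadowing iterates. This side determination should follow from propagating the critical direction at $c_{(i)}$ under $Df^{M_i}$ and using that the shadowing iterates belong to $W^u(q^-_i)$ up to a small error and accumulate in $Q_{i+1}$ along the cycle.
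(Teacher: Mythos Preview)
Your construction has two genuine gaps, and both lie precisely where the paper has to work hardest.

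\medskip

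\textbf{The graph edges are not justified.} You assert that once the orbit of $p_n$ leaves a neighborhood of $q^+_i$ along $\Ga^u(q^+_i)$, it next approaches a critical point $c_j$ with $c_j\in W^u(q^+_i)$, hence $q^-_j=q^+_i$. Neither claim follows. First, the point $y_i\in\Ga^u_l(q^+_i)$ that the orbit of $p_n$ shadows may have $\omega(y_i)$ with dominated splitting and no critical point at all; by the $*-$sequence hypothesis $\omega(y_i)$ is then a saddle periodic orbit, and $y_i$ lands exactly on its stable manifold (this is the ``type~2'' transition in the paper). Second, even when $\omega(y_i)$ does contain a critical point $c_j$, you only get $c_j\in\overline{\Ga^u(q^+_i)}$, not $c_j\in\Ga^u(q^+_i)$; so $\alpha(c_j)$ need not be $q^+_i$. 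Your graph~$G$ therefore misses edges, and the closed walk you extract need not be a cycle in~$G$.

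\medskip

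\textbf{The quadrant condition is the real obstruction, and is not resolved.} Even if one grants you a cycle through tangency orbits, the arc of $\Ga^u(q_i)$ at the tangency point $x_{i+1}$ lies locally on \emph{one} side of $\Ga^s_l(q_{i+1})$, and nothing in your argument forces that side to be $Q_{i+1}$. Propagating the critical direction under $Df^{M_i}$ gives you the tangent line at $x_{i+1}$, which coincides with $T_{x_{i+1}}W^s(q_{i+1})$ anyway; it says nothing about which half-plane the arc occupies. This is not a technicality: it is exactly why the paper's Proposition~\ref{existence of touching} exists. There the authors start from a \emph{maximal} critical cycle (where the connections are only accumulations, not exact hits) and, when it fails to be touching, analyze $\alpha(c_{i+1})$ to insert an auxiliary saddle $\bar q$ between $q_i$ and $q_{i+1}$ so that $\Ga^u(q_i)$ genuinely crosses $\Ga^s(\bar q)$. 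That step requires the Pixton-disk argument (Remark~\ref{rmk:pixton}) and Proposition~\ref{prop trans}, and the resulting touching cycle is in general \emph{not} a critical cycle. Your construction, built only from tangency orbits, cannot produce these transversal insertions and therefore cannot close the argument.
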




\subsubsection{Proof of proposition \ref{prop:*-touching}}

The proof proceeds in three stages. First, starting from the 
$*-$sequence we iteratively construct a chain of boundary saddle periodic points and quadrants; this chain closes into a {\it maximal critical cycle} with a marked sequence of boundary critical points. Second, we develop the geometric tools (a rectangle lemma, Pixton disks, and a transversality dichotomy for unstable branches (Proposition \ref{prop trans})) that govern how unstable manifolds interact with the boundary structure produced by the cycle. Third, we show that any maximal critical cycle is either itself a touching cycle or can be enlarged to one (Proposition \ref{existence of touching}); combined with Proposition \ref{touching-cycle} and Corollary \ref{touching}, this concludes the proof.

\paragraph{Building the maximal critical cycle.} The next corollary follows from  claim \ref{cpt-lemma}.

    

\begin{corollary}\label{jumping} Let $f$ be mildly dissipative and let $q_0, q_1, q_2$ be  three  saddle points and  $Q_1, Q_2$ two quadrants of $q_1, q_2$. Let  $\Ga^u_l(q_1)$ and $\Ga^u_l(q_2)$ be two  local unstable branches of $q_1$ and $q_2$ respectively, such that $\Ga^u(q_1)$  bounds $Q_1$ and $\Ga^u(q_2)$  bounds $Q_2$. If the iterates of $\Ga^u(q_0)$ accumulate on $q_1$ along $Q_1$ and $\Ga^u(q_1)$ accumulates on $q_2$ along $Q_2$, it follows that $\Ga^u(q_0)$ accumulates at $q_2$ along $Q_2$ and the accumulation set of $\Ga^u(q_0)$ contains $\Ga^u_l(q_2).$ 
    
\end{corollary}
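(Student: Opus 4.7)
}

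The plan is to combine the transitivity property from Claim~\ref{cpt-lemma} with a standard $\lambda$-lemma argument to chain the two accumulation properties $q_0\!\to\! q_1$ and $q_1\!\to\! q_2$ together, and then to upgrade the resulting conclusion to accumulation \emph{along} $Q_2$ by pushing everything forward under a finite iterate.

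First, I would show that the accumulation set $\Lambda^u_0$ of $\Gamma^u(q_0)$ contains the branch $\Gamma^u(q_1)$. Fix a fundamental domain of $\Gamma^u(q_0)$. By hypothesis there is a sequence $z_k$ in this fundamental domain and positive integers $n_k\to\infty$ with $f^{n_k}(z_k)\in Q_1$ and $f^{n_k}(z_k)\to q_1$. Near $q_1$ the dynamics is hyperbolic; since the iterates lie in the open quadrant $Q_1$ they approach $q_1$ transversally to $W^s_{loc}(q_1)$ in a definite manner, so that a classical $\lambda$-lemma argument yields further forward iterates of the arcs $f^{n_k}(\Gamma^u(q_0))$ which $C^1$-accumulate on any compact piece of the branch $\Gamma^u_l(q_1)$ bounding $Q_1$; forward iterating, one gets accumulation on all of $\Gamma^u(q_1)$. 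In particular $\Lambda^u_0$ intersects $\Gamma^u(q_1)$, so Claim~\ref{cpt-lemma} yields $\Lambda^u_1\subset\Lambda^u_0$ where $\Lambda^u_1$ is the accumulation set of $\Gamma^u(q_1)$. Since $q_2\in\Lambda^u_1$ by hypothesis, we already obtain $q_2\in\Lambda^u_0$.

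To upgrade this to accumulation along $Q_2$, I would fix an arbitrary $\varepsilon>0$, and use the hypothesis ``$\Gamma^u(q_1)$ accumulates on $q_2$ along $Q_2$'' to pick a point $y\in\Gamma^u(q_1)$ and an integer $m\geq 0$ with $f^m(y)\in Q_2$ and $d(f^m(y),q_2)<\varepsilon/2$. From the first step there exist $z\in\Gamma^u(q_0)$ and $n$ arbitrarily large with $f^n(z)$ arbitrarily close to $y$; by continuity of $f^m$ the point $f^{n+m}(z)$ is arbitrarily close to $f^m(y)$, and since $Q_2$ is open we may arrange $f^{n+m}(z)\in Q_2\cap B_\varepsilon(q_2)$. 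Letting $\varepsilon\to 0$ produces a sequence of forward iterates of points in the fundamental domain of $\Gamma^u(q_0)$ which lies in $Q_2$ and converges to $q_2$, which is exactly the required accumulation.

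The main obstacle is the careful application of the $\lambda$-lemma in the first step: one must argue that the tangent directions of $f^{n_k}(\Gamma^u(q_0))$ at $f^{n_k}(z_k)$ are forced to align with $E^u_{q_1}$, so that the subsequent iterates genuinely track $\Gamma^u_l(q_1)$. This follows from mild dissipation together with the fact that the iterates $f^{n_k}(z_k)$ remain inside $Q_1$ and therefore stay in a cone away from $W^s_{loc}(q_1)$, which is exactly the content underlying Claim~\ref{cpt-lemma} in \cite{CPT}, so no essentially new technical input is required.
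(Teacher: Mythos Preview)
Your proof is correct and follows the paper's approach, which is simply to invoke Claim~\ref{cpt-lemma}; you have spelled out the two steps (showing $\Lambda^u_0$ meets $\Gamma^u(q_1)$, then passing to the quadrant $Q_2$ by continuity) that the paper leaves implicit. Your $\lambda$-lemma concern is unnecessary, however: since the points $f^{n_k}(z_k)$ lie in the open quadrant $Q_1$, the local topological dynamics of the saddle $q_1$ alone forces their further forward iterates to accumulate on the bounding branch $\Gamma^u_l(q_1)$, which already gives $\Lambda^u_0\cap\Gamma^u(q_1)\neq\emptyset$ without any analysis of tangent directions.
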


Let $(p_n)$ be a $*-$sequence of periodic points, and let $c_0$ be a boundary critical point accumulated by the sequence. Let $q_0$ be the boundary periodic point that contains $c_0$ in its stable manifold;  let $Q_0$ be a  quadrant of $q_0$ that contains infinitely many points of the sequence $(p_n)$ and taking a subsequence one can assume that they all belong to the same quadrant.  Observe that there exists  a point $y_0$ in the local unstable branch $\Ga^u(q_0)$ that bounds $Q_0$ such that $f^{k_n}(p_n)\to y_0$ and $f^j(p_n)\in Q_0$ for any $0\leq j\leq k_n$ (observe that  there is also a sequence of iterates $(f^{j_n}(p_n))$ such that  $f^{j_n}(p_n)\to q$ and $0< j_n< k_n$).  Let us consider $\omega(y_0)$ and observe that any point in that set is also accumulated by iterates of $(p_n)$; either $\omega(y_0)$ contains a critical point $c_1$ that belongs to the stable manifold of a boundary periodic point or $\omega(y_0)$ has dominated splitting.  In the first case, it has to be a boundary critical point that belongs to one of the finite boundary periodic points. In the second case by the definition of $*-$sequence it follows that $\omega(y_0)$ is  a periodic orbit and so, there is a saddle periodic point $q_1$ and a forward  iterate of $y_0$ named $c_1$ that is in the local stable manifold of $q_1$.     In conclusion,  we obtain a new  saddle periodic point $q_1$,  a quadrant $Q_1$ and a point $c_1$ satisfying:

\begin{itemize}
    \item[--] $q_1$ and $c_1$ are  accumulated by  iterates of the sequence $(p_n)$ that are  contained in $Q_1$ and  

    \item[--] $q_1$ is accumulated  by $\Ga^u(q_0)$ along the quadrant $Q_1$.
\end{itemize}
Moreover, it follows that either 
\begin{itemize}
    \item[--] $c_1$ is a critical boundary point that belongs to the local stable manifold of $q_1$ (a boundary saddle period point)  and infinite forward iterates of $y_0$ accumulate on $c_1$, let us call this case type 1,  or 

    \item[--] $q_1$ is a   saddle periodic point (that could be a boundary saddle periodic point) and  there is a forward iterate of $y_0$ equal to  $c_1$ (that could be a boundary critical point), let us call this case type 2.
\end{itemize}

Informally, type 1 is the favorable case: the construction has reached a
 boundary critical point $c_1$ that belongs  in the stable manifold of the
boundary saddle $q_1$, and the chain can close there. Type 2 is the
inconclusive case: the forward orbit of $y_0$ has only passed through a
saddle $q_1$ without yet landing on a boundary critical point, so the
construction must be continued from $q_1$.

We repeat the argument from $q_1$, producing a chain of periodic points and
quadrants $(q_n,Q_n)$ each of type 1 or type 2; the goal is to show that the
chain either closes into a touching cycle directly or, after removing  the
inconclusive consecutive  type 2  via corollary \ref{jumping}, one gets a critical cycle
of boundary critical points. More precisely, if there are finitely many consecutive ones in that chain that are of type 2 followed by a saddle of type 1, using corollary \ref{jumping}, one can remove those consecutive points of type 2 and the next one in the chain is a critical boundary saddle point (type 1).  If there are infinitely many consecutive ones  in that chain  that are of type 2 and such that their orbits are uniformly bounded away from the critical set, by the definition of $*-$sequence it follows that they have to repeat and they form a touching cycle proving proposition \ref{prop:*-touching}.  If there are infinitely many consecutive ones in that chain that are of type 2 and such that their orbits accumulate on a  critical point, given that the periodic points in the chain are accumulated by the orbits of the sequence $(p_n)$, it follows that  their orbits also accumulate on that critical point and therefore it is a boundary critical point and, again, using corollary \ref{jumping} one can replace the consecutive chain of periodic points of type 2 by a critical boundary point (type 1). 

Therefore either  a touching cycle that encloses the $*-$sequence is obtained, or using the fact that the number of critical boundary points is finite there is a cycle of quadrants and marked points $(q_i, Q_i, c_i, y_i)$ such that  
\begin{itemize}
\item[--] $q_i$ is a boundary saddle periodic point,

\item[--] $Q_i$ is a  quadrant bounded by $\Gamma^s_l(q_i)$ and $\Gamma^u_l(q_i)$ that contains $c_i$ and $y_i$ respectively,

\item[--]  $c_i$ is a boundary critical point,

\item[--] the  forward  iterates of $y_i$ accumulate  on $c_{i+1}$ inside $Q_{i+1}$.

\end{itemize}
moreover,  the orbits of the sequence $(p_n)$ satisfy the following properties (named $(P)$)
 \begin{itemize}
\item[--] accumulates on the boundary critical points $(c_i)$ inside $Q_i$,

\item[--] accumulates on the periodic points $(q_i)$ inside $Q_i$.


\end{itemize}
 
The next remark shows that any critical cycle satisfying property (P) can be enlarged to other critical cycle also satisfying property (P) and thereby obtaining a maximal critical cycle.

\begin{remark}\label{enlarging}
    
Observe now that given a critical cycle, $(q_i, Q_i,c_i, y_i)$ if there are two consecutive periodic points  $q_i, q_{i+1}$ in the cycle and a third periodic point $\bar q$ such that 

\begin{itemize}
    \item[--] there is a critical point $\bar c$ that belongs to some stable branch of $\bar q$ that  is  accumulated by iterates of $y_i,$

    \item[--] there is a point $\bar y$ in the unstable branch of $\bar q$ that is also accumulated by iterates of $y_i$ such that forward iterates of $\bar y$ accumulate on $c_{i+1},$

\end{itemize}

 then   one can enlarge the cycle including  $(\bar q, \bar Q, \bar c, \bar y)$ and that the orbits of the sequence $(p_n)$ also satisfy the properties  $(P)$ listed above, and so  $\bar c $ is a boundary critical point. In that way, using that the boundary critical points are finite,  a critical cycle can be enlarged to a {\it 
 maximal critical cycle}.
 \end{remark}

Proposition \ref{prop:*-touching} follows from next  proposition. 


\begin{proposition}\label{existence of touching} Given a maximal critical cycle then either the cycle is touching or it is contained in a larger touching cycle (that it is not necessarily a critical cycle).

\end{proposition}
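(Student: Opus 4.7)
The plan is to analyze the cycle one leg at a time and, on each non-touching leg, insert an auxiliary (non-critical) hyperbolic saddle whose existence is guaranteed by Proposition~\ref{touching-cycle}. Fix a maximal critical cycle $(q_i, Q_i, c_i, y_i)$ and concentrate on a single leg: the forward iterates $f^n(y_i)$ accumulate on $c_{i+1}$ inside $Q_{i+1}$, and $c_{i+1}\in W^s_l(q_{i+1})$. If already some iterate $f^{n_i}(y_i)\in W^s_l(q_{i+1})$ and the unstable arc of $q_i$ through $f^{n_i}(y_i)$ protrudes into $Q_{i+1}$, then this leg is touching, possibly after relabelling $c_{i+1}$ by the landing point (which need not be critical, consistent with the statement). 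So the interesting case is when the forward orbit of $y_i$ approaches $c_{i+1}$ without ever meeting $W^s_l(q_{i+1})$ at $c_{i+1}$.

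\textbf{Breaking a non-touching leg.} In that case, apply Proposition~\ref{touching-cycle} at the saddle $q_{i+1}$ with the point $x=c_{i+1}\in W^s_\DD(q_{i+1})$: this produces a sequence $(\bar q_n)$ of mutually homoclinically related hyperbolic saddles inside $Q_{i+1}$ accumulating on $c_{i+1}$, with stable manifolds $W^s_\DD(\bar q_n)\to W^s_\DD(q_{i+1})$ transverse to $\Gamma^u_l(q_{i+1})$, and unstable arcs $\gamma^u_n\subset W^u(\bar q_n)\cap Q_{i+1}$ reaching $W^s_\DD(\bar q_{n+1})$. The manifolds $W^s_\DD(\bar q_n)$ fill a neighborhood of $c_{i+1}$ inside $Q_{i+1}$; since the iterates $f^k(y_i)$ accumulate on $c_{i+1}$ precisely from inside $Q_{i+1}$, for $n$ sufficiently large some iterate $f^{k}(y_i)$ lies exactly on $W^s_\DD(\bar q_n)$. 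Insert $\bar q_n$ (with the quadrant $\bar Q$ cut out by the local stable branch containing $f^k(y_i)$ and the unstable arc $\gamma^u_n$) between $q_i$ and $q_{i+1}$. The arc of $W^u(q_i)$ around $f^k(y_i)$ crosses $W^s_l(\bar q_n)$ into $\bar Q$, producing a touching intersection $q_i\to\bar q_n$; then $\gamma^u_n$ provides a touching intersection $\bar q_n\to q_{i+1}$ landing in $W^s_l(q_{i+1})$ inside $Q_{i+1}$.

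\textbf{Global assembly and use of maximality.} Performing this surgery on every non-touching leg yields a finite sequence of quadruplets (some original $(q_i, Q_i, c_i, y_i)$, some inserted $(\bar q, \bar Q, \bar x, \bar y)$) that is again a cycle and is now touching. The role of maximality is twofold: it ensures that between consecutive original vertices we do not have to pass through \emph{another} boundary critical point (otherwise Remark~\ref{enlarging} would have forced us to enlarge the critical cycle earlier, contradicting maximality); and it guarantees that the inserted saddles $\bar q_n$ can be taken inside a neighborhood of $c_{i+1}$ free of boundary critical points, so that the construction closes up after finitely many insertions, using the finiteness in Remark~\ref{rmk:finite-boundary}.

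\textbf{Main obstacle.} The delicate point is verifying, at each inserted saddle $\bar q_n$, the precise arc condition of Definition~\ref{def-cycle-points}: namely that the landing point of $f^k(y_i)$ on $W^s_l(\bar q_n)$ actually carries an arc of $W^u(q_i)$ reaching into the chosen quadrant $\bar Q$, and symmetrically that the arc $\gamma^u_n$ of Proposition~\ref{touching-cycle} enters $Q_{i+1}$ at its endpoint in $W^s_l(q_{i+1})$. This requires combining the transversality provided by the homoclinic structure of $(\bar q_n)$ with mild dissipativity (to control the two-dimensional geometry of $W^s_\DD(\bar q_n)$ near the boundary of $\DD$) and with the absence of critical points strictly between $q_i$ and $q_{i+1}$ afforded by maximality; once these are in place, the arc conditions follow from the $\lambda$-lemma applied along the inclination structure near $\bar q_n$.
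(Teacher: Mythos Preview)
Your argument has a fatal circularity. You propose to apply Proposition~\ref{touching-cycle} at the saddle $q_{i+1}$ to produce the auxiliary saddles $(\bar q_n)$, but look again at the hypothesis of that proposition: it begins ``Given a touching cycle of a mildly dissipative diffeomorphism of the disk, for any saddle $q$ in the cycle\dots''. The input is a touching cycle, and the point $x\in W^s_\DD(q)$ is part of the \emph{conclusion}, not something you get to prescribe. Since Proposition~\ref{existence of touching} is precisely what manufactures a touching cycle in the first place, you cannot invoke Proposition~\ref{touching-cycle} here; the logical order in the paper is Proposition~\ref{existence of touching} $\Rightarrow$ touching cycle $\Rightarrow$ Proposition~\ref{touching-cycle} $\Rightarrow$ Corollary~\ref{touching}.

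Even if one granted you a sequence $(\bar q_n)$ with the listed properties, the step ``for $n$ sufficiently large some iterate $f^k(y_i)$ lies exactly on $W^s_\DD(\bar q_n)$'' is not justified: the forward orbit of $y_i$ is a countable set accumulating on $c_{i+1}$, and a countable family of curves converging to $W^s_\DD(q_{i+1})$ need not capture any point of that orbit. What you actually need is that the \emph{arc} $\Gamma^u(q_i)$ crosses one of these stable manifolds, which is a different statement requiring a separate topological argument.

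The paper's route is entirely different and does not rely on Proposition~\ref{touching-cycle}. It exploits the Misiurewicz hypothesis on the \emph{backward} side: since $\alpha(c_{i+1})$ avoids the critical set, it has dominated splitting, and by the $*$-sequence assumption it must be a single saddle orbit $\bar q$. If $\bar q=q_0$ one is done; otherwise one finds a non-critical point $\bar x\in\Gamma^s_l(\bar q)$ accumulated by $\Gamma^u(q_0)$ (criticality of $\bar x$ would contradict maximality via Remark~\ref{enlarging}), and then a second backward-limit analysis of $\bar x$, combined with the rectangle construction of Lemma~\ref{rectangle} and the crossing alternative of Proposition~\ref{prop trans}, forces $\Gamma^u(q_0)$ to actually intersect $\Gamma^s(\bar q)$. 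This is the mechanism that produces the touching intersection, and it is where the real work lies.
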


The rest of the section focuses on proving the previous proposition.  For that, first we need to introduce a few tools. 

\paragraph{Rectangles, Pixton disks and transversality  dichotomy.} 
The next three results give the geometric infrastructure used in the proof of Proposition \ref{existence of touching}: a rectangle lemma controlling the local stable geometry near an accumulation point, the notion of a Pixton disk associated with a non-touching cycle, and a transversality dichotomy for unstable branches.

We start with a simple lemma for mildly dissipative diffeomorphisms of the disk.

    


\begin{lemma}\label{rectangle} For any  mildly dissipative diffeomorphism of the disk it follows that given a saddle periodic point $q$ and $x\in W^s_{\DD}(q)$ that is accumulated by the unstable branch of a saddle periodic point $p$, there exists a rectangle $R$ arbitrarily close to $x$ (see figure \ref{rectangle-fig}) bounded by two pairs of parallel  arcs $(\ga_1, \ga_2)$ and $(\be_1, \be_2)$ satisfying that
\begin{itemize}
    \item[--] $\ga_1$ is contained in the local stable manifold of $q$ and $x$ belongs to the interior of $\ga_1,$
    \item[--] there is a sequences of connected arcs $(\si_n)$ contained in the unstable branch of $p$ such that 
    \begin{itemize}
     \item[--] each $\si_n$ is contained in $R$ and is disjoint with $\be_1$ and $\be_2,$
     \item[--] one extreme point of the arc $\si_n$ belongs to $\ga_2$ and the other extreme points $(x_n)$ satisfy that $x_n\to x.$

    \end{itemize}

\end{itemize}

\end{lemma}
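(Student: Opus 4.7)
The plan is to transport the problem to a small neighborhood of $q$, where the hyperbolic structure of the saddle provides a natural rectangle, and then pull it back by $f^{-N}$ to obtain the desired rectangle near $x$. Replacing $f$ by $f^{\mathrm{per}(p)\cdot\mathrm{per}(q)}$ we may assume both $p$ and $q$ are fixed. Since $x\in W^s_\DD(q)$ one has $f^n(x)\to q$, so fix $N$ large with $x^*:=f^N(x)$ lying inside a small neighborhood $U$ of $q$ in which the local product structure of the saddle is well defined. Let $y_n\in\Ga^u(p)$ be the given sequence with $y_n\to x$, and set $y_n^*:=f^N(y_n)\in\Ga^u(p)$; then $y_n^*\to x^*$.

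Inside $U$, build a ``hyperbolic box'' $R^*$ around $x^*$: let $\ga_1^*\subset W^s_\eps(q)$ be a short arc containing $x^*$ in its interior, let $\ga_2^*$ be the image of $\ga_1^*$ under the local unstable holonomy at a small prescribed transverse distance, and let $\be_1^*,\be_2^*$ be short unstable segments joining the corresponding endpoints; thus $R^*$ has its two ``horizontal'' sides in the stable direction of $q$ and its two ``vertical'' sides in the unstable direction. The key claim is that for $n$ large, the connected component $\si_n^*$ of $\Ga^u(p)\cap R^*$ containing $y_n^*$ is a $C^1$ arc crossing $R^*$ from $\ga_2^*$ to $\ga_1^*$, disjoint from $\be_1^*\cup\be_2^*$, and whose intersection with $\ga_1^*$ tends to $x^*$ as $n\to\infty$. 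This is an application of the $\lambda$-lemma for $q$: iterating by $f^N$ a short arc of $\Ga^u(p)$ through $y_n$ that is transverse to $W^s(q)$, one obtains an arc of $\Ga^u(p)$ through $y_n^*$ whose tangent direction at $y_n^*$ is $C^1$-close to $T_q W^u_\eps(q)$; by taking $R^*$ thin enough in the unstable direction this arc spans $R^*$ from one stable side to the other, and since $y_n^*\to x^*$ the exit of $\si_n^*$ through $\ga_1^*$ converges to $x^*$.

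Finally, set $R:=f^{-N}(R^*)$, $\ga_i:=f^{-N}(\ga_i^*)$, $\be_i:=f^{-N}(\be_i^*)$, and $\si_n:=f^{-N}(\si_n^*)$. Since $f^{-N}$ is a diffeomorphism (with $R^*\subset f^N(\DD)$ once $R^*$ is small enough), $R$ is a $C^1$ topological rectangle near $x$, $\ga_1\subset W^s_\DD(q)$ contains $x=f^{-N}(x^*)$ in its interior, and each $\si_n\subset \Ga^u(p)\cap R$ is a connected arc with one extreme point on $\ga_2$, the other extreme $x_n:=f^{-N}(\si_n^*\cap\ga_1^*)\to x$, and disjoint from $\be_1\cup\be_2$. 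To ensure $R$ is arbitrarily close to $x$, fix $N$ first and then shrink $R^*$: since $f^{-N}$ has bounded distortion on the compact set $\overline U$, the diameter of $R$ tends to zero with that of $R^*$.

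The hard part is the transversality claim in the second paragraph: that $\si_n^*$ spans $R^*$ as asserted rather than exiting through the vertical sides $\be_i^*$ or turning back through $\ga_1^*$. This requires $C^1$-control over the tangent direction of $\Ga^u(p)$ at $y_n^*$ via the $\lambda$-lemma for $q$, and also requires excluding the degenerate case where the directions $T_{y_n}\Ga^u(p)$ are all asymptotically parallel to $T_xW^s(q)$, since this would force a tangency between $W^u(p)$ and $W^s(q)$ at $x$. This degeneracy can be handled by passing to a subsequence of $y_n$ along which the tangent directions stay bounded away from $T_xW^s(q)$ (which is possible because $\Ga^u(p)$ is a $C^1$ curve and so the map $y\mapsto T_y\Ga^u(p)$ cannot be identically tangent to $W^s(q)$ along the accumulating sequence unless $x$ is a tangency point, in which case a direct geometric argument works), and then applying the $\lambda$-lemma to the transverse short arcs obtained thereby.
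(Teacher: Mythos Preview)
Your approach through the $\lambda$-lemma has a genuine gap, and it diverges from the paper's argument in a way that matters.

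The central problem is your claim that the arcs $\sigma_n^*\subset\Gamma^u(p)$ through $y_n^*$ span the box $R^*$ from $\gamma_1^*$ to $\gamma_2^*$ while avoiding the sides $\beta_1^*,\beta_2^*$. You justify this by the $\lambda$-lemma, but its hypotheses are not available. First, the points $y_n$ need not lie on $W^s(q)$ at all (the statement only says $y_n\to x$), and no transversality between $\Gamma^u(p)$ and $W^s(q)$ is assumed; the unstable branch could accumulate on $x$ entirely from one side of $W^s_{\DD}(q)$ without ever crossing it. Second, even granting a transverse crossing near each $y_n$, the $\lambda$-lemma gives $C^1$-convergence to $W^u_{loc}(q)$ only as the number of iterates goes to infinity, whereas you have fixed $N$ at the start: the direction of $\Gamma^u(p)$ at $y_n^*$ is merely $Df^N$ applied to the direction at $y_n$, and if the latter makes an angle with $E^s$ that tends to zero as $n\to\infty$, no fixed $N$ suffices. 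Your last paragraph concedes the difficulty but does not repair it: the claim that one can pass to a subsequence with directions bounded away from $T_xW^s(q)$ is unjustified, and ``a direct geometric argument works'' for the tangency case is not an argument.

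More tellingly, your proof never uses mild dissipation, which is the only hypothesis of the lemma beyond the existence of the saddles. The paper's proof exploits it directly and sidesteps every $C^1$ issue. One places the endpoints of $\gamma_1$ at points of $W^s_{\DD}(q)$ whose backward orbit leaves $\DD$ (such points exist precisely because, by mild dissipation, each stable branch exits the disk), and then takes $\beta_1,\beta_2$ to be short transversal arcs at those endpoints, short enough that some backward iterate $f^{-k}(\beta_j)$ lies in $\DD^c$. Since $W^u(p)=\bigcup_{n\geq 0} f^n(W^u_{loc}(p))\subset \DD$ is fully invariant, every point of $\Gamma^u(p)$ has its entire backward orbit inside $\DD$; hence $\Gamma^u(p)\cap(\beta_1\cup\beta_2)=\emptyset$. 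The connected components of $\Gamma^u(p)\cap R$ therefore have all their endpoints on $\gamma_1\cup\gamma_2$, and since $p\notin R$ while $\Gamma^u(p)$ accumulates on $x$ in the interior of $\gamma_1$, the required arcs $\sigma_n$ are found among these components. This is a purely topological obstruction argument, with no need for any control on tangent directions.
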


\begin{proof} One can choose an open arc $\delta$ in $W^s_\DD(q)$ whose backward iterates are contained in the complement of the disk. Then, one can choose two consecutive iterates $\delta_i, \delta_{i+1}$ of $\delta$ such that the point $x$ is in the interior of an  arc $\gamma$ contained in the local stable branch bounded by $\delta_i$ and $\delta_{i+1}$ (observe that since  $x$ is accumulated by the unstable branch of a periodic point $p$, its full orbit is contained in the disk). Extending the arc $\gamma$ to an arc $\gamma_1$  (also inside the local stable branch of $q$), one can assume that the extreme points of $\gamma_1$ are contained in $\delta_i$ and $\delta_{i+1}$ respectively. Now, one can choose two parallel transversal  arcs $\beta_1, \beta_2$ in $Q$ (the quadrant of $q$ where the unstable branch of $p$ accumulates at $x$) and such that one of the  extreme points of each arc coincides with the opposite extreme points of $\gamma_1$; observe that if both arcs are small enough then their backward iterates are fully contained in the complement of the disk. Then, one chooses an arc $\gamma_2$ parallel to $\gamma_1$ and close to it, to form a rectangle $R$. The unstable branch $\Ga^u(q)$ that  accumulates at $x$ is contained in the disk so it cannot intersect the boundaries $\beta_1, \beta_2$ and given that $p$ is not in $R$, its unstable branch intersects  $\gamma_2$; now, one can take all the connected components $\Ga^u(q)$ intersected with $R$ and   some of those connected components  accumulate on $x.$
    
\end{proof}

\begin{figure}[h]
\centering
\includegraphics[scale=0.75]{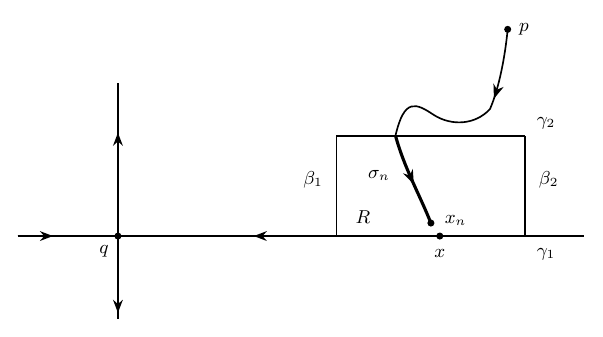}
\caption{Rectangle in lemma \ref{rectangle}}
\label{rectangle-fig}
\end{figure}

Now we introduce a definition that extends one provided in \cite{CPT} (see definition 5.9 in that paper).

\begin{definition} Given a cycle  of periodic points and quadrants  $(q_i, \Ga^s_l(q_i), \Ga^u_l(q_i))$ such that $\Ga^u(q_i)$ does not intersect  $\Ga^s(q_{i+1})$, a {\it Pixton disk} (see figure \ref{fig_pixton}) associated with that sequence  is a topological disk $D$ bounded by a collection of arcs $(\ga^s_i, \ga^u_i, \de^i)$ such that

\begin{itemize}
   \item[--] $\ga^u_i$ is an arc in $\Ga^u(q_i)$,

   \item[--] $\ga^s_i$ is an arc in $W^s_\DD(q_{i})$ with one endpoint is $q_i$ and another endpoint is a point $x_i$ that is accumulated by $\Ga^u(q_{i-1}),$

   \item[--] $\de_i$ is an arc that joins $\ga^u_i$ and a point $x_{i+1}$ in $\ga^s_{i+1}$ satisfying that $f(\de_i)\cap \de_i=\emptyset$ (it could be the case that $\de_i$ is empty and in that case $x_{i+1}\in \ga^u_i\cap \ga^s_{i+1}$).
\end{itemize}

\end{definition}

\begin{figure}[h]
\centering
\includegraphics[scale=0.75]{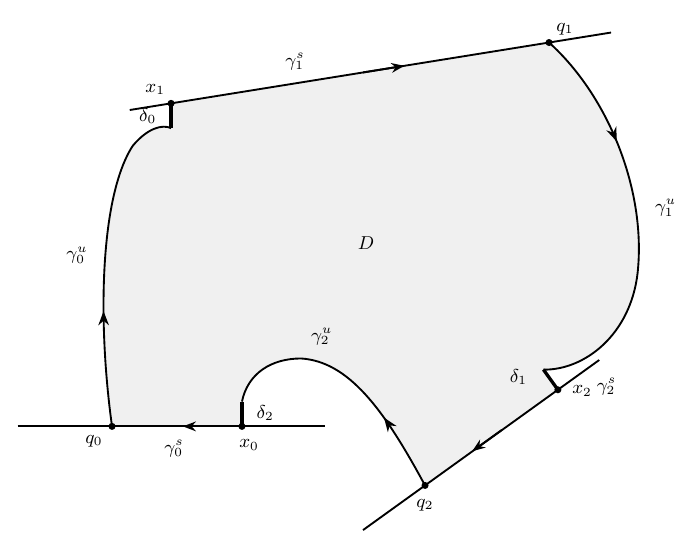}
\caption{Pixton disk involving three saddles.}
\label{fig_pixton}
\end{figure}
A version of the next  lemma  is stated in \cite{CPT} and its proof follows from lemma 5.10. For completeness (and because the assumption in that lemma are not quite similar) we include the proof.

\begin{lemma}\label{rmk:pixton} Given a  Pixton disk, there  is a point $x$ in one of the unstable branches such that  its forward iterates are not in the interior of the Pixton disk and therefore neither  $\omega(x)$.
    
\end{lemma}

 \begin{proof}
     
The proof follows from showing that $K=\alpha(x_i)$ is disjoint from the interior of the Pixton disk. In fact, if for some $x_i$ it holds that its $\alpha$-limit is inside $D$, let $f^{-n_k}(x_i)\in D$. From the mild dissipativeness and the fact that the unstable branches of the points in the cycle  do not intersect the stable branches, it follows that $W^s_\DD(f^{-n_k}(x_i))$ intersects some $\de_j$. Since $f(\de_j)\subset D$ (provided that $\de_j$ is small) it follows that $f(W^s_\DD(f^{-n_k}(x_i)))\cap D\neq\emptyset$ and since $f(W^s_\DD(f^{-n_k}(x_i)))\subset W^s_\DD(f^{-n_k+1}(x_i))$ it follows again that $W^s_\DD(f^{-n_k+1}(x_i))\cap \de\neq \emptyset.$ Inductively, it follows that $W^s_\DD(x_i)\cap \de_j\neq \emptyset,$ a contradiction.
\end{proof}

The next proposition is a preparatory one to prove proposition \ref{existence of touching} and states that under the hypothesis of theorem \ref{thm:finite classes} any unstable branch of a saddle periodic point that is not contained  in the basin of attraction of a (semi)attracting periodic point has to cross the stable of another saddle. The proof uses the dichotomy of  theorem \ref{disk2} and that if the critical points accumulated by the unstable branch of the saddle are not interior critical points, then there is a cycle involving boundary saddle periodic points and therefore a Pixton disk as defined before. 

\begin{proposition} \label{prop trans} Under the hypothesis of theorem \ref{thm:finite classes},  given a saddle periodic point $p$  and one of its unstable branches $\Ga^u(p)$, it holds  

\begin{itemize}
  \item[--] either it  crosses the stable manifold   of a saddle periodic point or, 

 \item[--] there is a (semi)attracting periodic point such $\Ga^u(p)$   intersects the interior of its  basin of attraction.
    
\end{itemize}

\end{proposition}

\begin{proof} 

 If  the closure of the unstable branch of $p$ does not have critical points then by theorem \ref{disk2} either it is contained in the basin of attraction of a (semi)attracting periodic point or it is a non-trivial normally hyperbolic attracting interval; in the last case the unstable branch crosses the stable manifold of a saddle in the normally hyperbolic arc. 

If it does have  critical points $c$  then $\omega(c)$ has dominated splitting so either it contains interior points, or is a heterocycle, or it is a periodic point.  In the first two  cases the proposition is proved.  In the last case, it has to be a boundary saddle periodic point. 

So, it remains to consider the case that all the critical points in the closure of the unstable branch are  boundary critical points. In that case, recall that the boundary critical points  belong to a finite number of boundary saddle points, named $\{q_1
\dots q_n\}$. 

\emph{Iterating the dichotomy.} For each boundary saddle point $q_i$, one considers its unstable branch  $\Ga^u(q_i)$ that is accumulated by the unstable branch of $p$ , and apply the same dichotomies as before: if  some  branch $\Gamma^u(q_i)$  does not have critical points, again by theorem \ref{disk2}  the proposition is concluded. So, it remains to consider the case where for all $q_i$ the set $\La^u(q_i)$ (the closure of any $\Ga^u(q_i)$ that is accumulated by $\Ga^u(p)$) has critical points that all are boundary critical points.

\emph{Creating cycles.} Reasoning as  before, for each critical  boundary point $c_i$  there is  a critical cycle (named ${\mathcal C}_i$) of saddle boundary points such that there is no transversal homoclinic intersection between the unstable and stable branches of those saddle points (otherwise the proposition is also proved); moreover, by remark \ref{enlarging} one can assume that the cycle ${\cal C}_i$  is maximal. If the cycle is a touching cycle, by proposition \ref{touching} the proposition  is concluded. 

\emph{Closing into Pixton disks and escaping from them.} It remains to  consider the case that all the  maximal cycles ${\cal C}_i$ are not  touching cycles. For each ${\cal C}_i$ one produces a  Pixton disk $D_{j_i}$. By lemma \ref{rmk:pixton} it follows that there is $x_{j_i}$  in the unstable branch of one of the saddles that bounds the Pixton disk $D_{j_i}$ such that $\omega(x_{j_i})$ is disjoint with  the interior of the Pixton disk $D_{j_i}$. If for  some $x_{j_i}$ it follows that $\omega(x_{j_i})$ does not have a critical point then, arguing as before, $\omega(x_{j_i})$ is a periodic orbit $O(\hat q)$ contained in $\La^u(p)$ and  $x_{j_i}\in W^s_\DD(\hat q)$. 

Now we have a simple dichotomy: either i) $\hat q\notin \{q_1\dots q_n\}$ or ii) $\hat q\in \{q_1\dots q_n\}$. In the first case, the unstable branch accumulated by $\Ga^u(p)$ does not have critical points and therefore, $\La^u(\hat q)$ is either a (semi)attracting periodic point or a normally hyperbolic arc and in both cases the proposition is proved.  In the second case,  the unstable branches of the saddles in the cycle ${\cal C}_{j_i}$  accumulate on  another Pixton disk $D_{j_i'}$. One can repeat the argument with the Pixton disk $D_{j_i'}$ and observe that the proposition is either proved  or the unstable branches of the periodic points in the cycle ${\cal C}_{j_i'}$ involved in the  Pixton disk has to accumulate on another Pixton disk;   since the cycle in each Pixton disk is maximal there cannot be a cycle between Pixton disk, then it follows that for some $x_{j_i}$ its $\omega(x_{j_i})$ does not have critical points and the proposition is concluded.

\end{proof}

\paragraph{Proof of proposition \ref{existence of touching}.} 

Let us consider two saddle $q_0$ and $q_1$ involved in the maximal critical cycle and let $c_1$ be the critical point in  $\Gamma^s(q_1)$ (one of the branches of $q_1$) that is accumulated by $\Gamma^u(q_0)$ and the orbits of the $*-$sequence $(p_n)$. The goal is to show that either $\Gamma^u(q_0)$ intersects $\Gamma^s(q_1)$ at $c_1$ or there is another saddle  periodic point $\bar q$ such that $\Gamma^u(q_0)$ intersects $\Gamma^s(\bar q)$ and  $\Gamma^u(\bar q)$ intersects $\Gamma^s(q_1).$ Applying this argument to each consecutive pair in the cycle, one concludes that the cycle is either a critical touching cycle or can be enlarged to a touching cycle. 

Since $\al(c_1)$ has dominated splitting and since the orbit of the $*-$sequence $(p_n)$ also accumulates at $\alpha(c_1)$ it follows that $\al(c_1)$ is a saddle periodic point named $\bar q$.  If $\bar q$ is equal to $q_0$ then $\Ga^u(q_0)$ intersects $\Ga^s(q_1)$. If $\bar q\neq  q_0$, observing that $\Ga^u(q_0)$ and the orbit of the $*-$sequence $(p_n)$ also accumulate on $\bar q$,  there is a point $\bar x$ in $\Ga^s_\DD(\bar q)$ (one of the local stable branches of $\bar q$) that is accumulated by $\Ga^u(p_0)$ and the orbits of $(p_n)$ along a same quadrant of $\bar q.$ If $\bar x$ is a critical point, it follows that adding $\bar q$ to the cycle one gets a larger critical cycle contradicting the fact that the initial critical cycle was  maximal.

To continue, one has to consider the case that $\bar x$ is not a critical point, and now we consider different possibilities related to $\al(\bar x)$: either it does not contain a critical point or it has it. Similarly, as before, the goal is to prove that $\Ga^u(q_0)$ intersects $\Ga^s(\bar q)$ and in that way we enlarge the cycle adding a periodic point such that now that part of the cycle is touching.

If $\al(\bar x)$ does not have  a critical point then it has a dominated splitting and by the fact that the  pre-iterates of the sequence $(p_n)$ accumulate on $\al(\bar x)$ it follows by the definition of $*-$sequence that $\alpha(\bar x)$ is a saddle periodic orbit (that one can assume it is fixed), named $\hat q$. In that case, $\bar x$ belongs to $\Ga^u(\hat q)$. The intersection of the unstable manifold of $\hat q$ and $\bar q$ at $\bar x$ is transversal, otherwise by lemma \ref{lem:tang-crit} there is a point in the orbit of $\bar x$ that is a critical point; since the $*-$sequence and $\Ga^u(q_0)$ also accumulate on the orbit of $\bar x$ we get a contradiction with the assumption that the cycle is maximal.   Since $\Ga^u(q_0)$ accumulates at $\bar x$, it also accumulates at $\hat q$; let $\hat Q$ be the quadrant of $\hat q$ that contains points of $\Ga^u(q_0)$ accumulating on $\hat q.$ Let $\Ga^s_\DD(\hat q)$ be the local stable branch of $\hat q$ that bounds $\hat Q$, and observe that  is accumulated by  connected arcs $\bar \ga_n$ contained in $\Ga^s(\bar q)$ that one extreme points are pre-iterates of $\bar x$ and the others are in the boundary of the disk. Then one gets a sequence of rectangles bounded by $\Ga^s_\DD(\hat q)$, $\bar \ga_n$, arcs in $\Ga^u(\hat q)$ and arcs in the boundary of the disk. These rectangles  contain in the interior iterates of $\Ga^u(q_0)$, and therefore it has to cross  the arcs $\bar \ga_n$ a so $\Ga^u(q_0)$ intersects $\Ga^s(\bar q)$ as we wanted.

It remains to consider the case that  $\al(\bar x)$  has critical points and let $\hat c$ be one of that critical points; since $\hat c$ is also accumulated by $\Ga^u(q_0)$ and pre-iterates of the sequence $(p_n)$ it follows that it is a boundary critical point and so there is a boundary periodic point $\hat q$,  that one can assume that it is fixed,  and $\hat c \in \Ga^s_\DD(\hat q)$.   
By proposition \ref{prop trans} (and the fact that $\Ga^u(\hat q)$ cannot be fully contained in the basin of attraction of a (semi)attracting periodic points since that branch is accumulated by periodic orbits), either there is a saddle $\tilde q$ such that $\Ga^u(\hat q)$ (the unstable branch of $\hat q$ that it is accumulated by the pre-iterates of $\bar x$) that crosses $\Ga^s(\tilde q)$ or there is a (semi)attracting periodic point (also denoted as $\tilde q$) such that the interior of its basin intersects  $\Ga^u(\hat q)$; in both cases, one gets an arc $\tilde \ga$ in  $W^s_\DD(\tilde q)$  that cross $\Ga^u(\hat q).$  Now, one  gets  that $\Ga^s_\DD(\hat q)$   is accumulated by  connected arcs $\tilde \ga_n$ obtained as pre-iterates of arcs in  $\tilde \ga$ whose endpoints are in $\Ga^u(\hat q)$ and in the boundary of the disk. Moreover, these arcs intersect the quadrant $\hat Q$ that contains the pre-iterates of $\bar x$ that accumulate on $\hat c$. Then one gets a sequence of rectangles $R_n$ bounded by $\hat \ga$, $\tilde \ga_n$, arcs in $\Ga^u(\hat q)$ and arcs in the boundary of the disk, that contains pre-iterates of $\bar x$. Let us pick $f^{-k}(\bar x)$ close to $\hat c$ and let $\bar \ga_k\subset W^s_\DD(f^{-k}(\bar x))$ be an arc in $\Ga^s(\bar q)$ that one end point is $\bar q$ and the other is $f^{-k}(x).$ Provided that $n$ and $k$ are  large, the   arc $\bar \ga_k$ has to exit $R_n$ and given that it cannot intersect neither $\hat \ga$ nor $\tilde \ga_n$ then it has to intersect either $\Ga^u(\hat q)$ or the boundary of the disk. In both cases, either pre-iterating or post-iterating $\bar \ga_k$,  there is a connected arc $\bar \ga_k'$ in $\Ga^s(\bar q)$ inside $R_n$ such that one extreme point close to $f(\hat c)$ and the other close to $f^{-1}(\hat c).$ On the other hand let us consider a rectangle $R$ as in lemma \ref{rectangle}, where the arcs $\si_m$ are contained in $\Ga^u(q_0).$ Taking $R\cap R_n$ observe that both arcs $\be_1$ and $\be_2$ are crossed by  $\bar \ga_k'$ and therefore, the arcs $\si_n$ has to cross $\bar \ga_k'$, showing again that $\Ga^u(q_0)$ intersects transversely $\Ga^s(\bar q)$ as we wanted  (see figure \ref{final} for the last configuration).

\qed

\begin{figure}[h]
\centering
\includegraphics[scale=0.75]{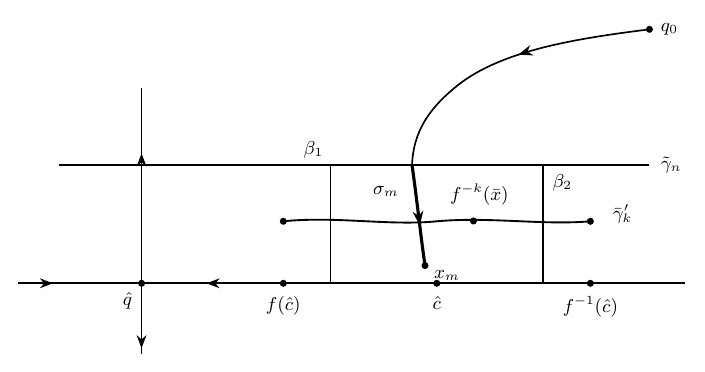}
\caption{Final of the proof of proposition \ref{existence of touching} }
\label{final}
\end{figure}

That concludes the proof of theorem \ref{thm:finite classes}.

\section{Further questions}\label{sec:questions}

 The critical set is a new
object, and several natural questions about it are left open by this paper.

\paragraph{Is ${\rm Crit}(f,\La)$ totally disconnected?} In all the examples of
section \ref{examples} the critical set is at most a Cantor set (for the
Wang-Young attractor (paragraph iv-) it is explicitly the intersection of a
nested sequence of rectangles) and none exhibits a critical arc. When
$f_{|\La}$ is far from homotheties the critical direction is unique and
varies continuously, which constrains the set; we do not know whether this,
or a dissipation hypothesis, forces total disconnectedness, nor whether a
continuum of critical points can occur in general.

\paragraph{When is a critical point strong?} Following \cite{CLPY} (paragraph
v-), one may strengthen definition \ref{def:crit} by requiring exponential
growth of the projective cocycle along the critical direction in both time
directions, $|g^n(v)|>(1+\de)^{|n|}$ for all $n$ and some $\de>0$. The
strong notion carries genuine geometry: at such a point there are a local
stable and a local center-stable manifold, tangent at the critical point, so
that the tangency picture of paragraph i- is recovered as actual invariant
manifolds rather than merely as a condition on the cocycle. For critical
points coming from tangencies this strengthening holds automatically
(paragraph i-), and proposition \ref{exponential recurrence} controls the
recurrent case; it would be of interest to identify general conditions,
on the recurrence of the critical set or on the dynamics of $\La$, under
which every critical point is strong, since, as \cite{CLPY} shows, the strong
notion carries substantial structural information.

\paragraph{Critical-orbit conditions and SRB measures.} More speculatively, the
critical set may support, for surface diffeomorphisms, the program that in
one dimension extracts ergodic information from conditions on the critical
orbit: a Collet--Eckmann condition, or summability and slow-recurrence
conditions in the spirit of Misiurewicz, yield absolutely continuous
invariant measures and, more generally, SRB measures. The Misiurewicz
condition of section \ref{sec:misiu} is the non-recurrent extreme of such a
hierarchy; one may ask whether intermediate conditions, call it a two-dimensional
Collet--Eckmann condition along the critical directions, or controlled
recurrence of the critical set, yield SRB measures, as they do in
dimension one and as they do, by different methods, in the work of
Benedicks--Carleson and Wang--Young (paragraph iv-).

\begin{tabular}{l l l}
\emph{Sylvain Crovisier} &&
\emph{Enrique Pujals}\\

Laboratoire de Math\'ematiques d'Orsay &&
Graduate Center \\

CNRS - Univ. Paris-Saclay &&
CUNY\\

Orsay, France &&
New York, USA\\

&&\\

\end{tabular}

\end{document}